\definecolor{darkgreen}{rgb}{0.1,0.7,0.1}
\newcommand{\BLUE}[1]{#1}
\newtheorem{lemma}{Lemma}[section]
\newtheorem{theorem}{Theorem}
\newtheorem{proposition}[lemma]{Proposition}
\newtheorem{remark}[lemma]{Remark}
\newtheorem{definition}[lemma]{Definition}
\newcommand{\bn}[1]{{[\kern-0.5ex] #1 
    [\kern-0.5ex]}}
\newcommand{\eps}{\varepsilon}
\newcommand\symb[2][\bf]{{\mathchoice{\hbox{#1#2}}{\hbox{#1#2}}%
        {\hbox{\scriptsize#1#2}}{\hbox{\tiny#1#2}}}}
\def\R{{\symb R}}
\def\N{{\symb N}}
\def\Z{{\symb Z}}
\def\C{{\symb C}}
\def\P{{\symb P}}
\def\un{\mathbf{1}}
\def\${|\!|\!|}
\newcommand{\crochet}[1]{\left\langle #1 \right\rangle}
\newcommand{\norm}[1]{\left\| #1 \right\|}
\newcommand{\normm}[1]{{\left\vert\!\left\vert\!\left\vert #1 \right\vert\!\right\vert\!\right\vert}}
\renewcommand{\P}{\operatorname{\mathbb{P}}}
\newcommand{\E}{\operatorname{\mathbb{E}}}
\renewcommand{\Re}{\operatorname{Re}}
\newcommand{\dd}{\operatorname{d\!}}
\newcommand{\supp}{\mathtt{supp}}
\newcommand{\bbC}{\mathbb{C}}
\newcommand{\bbZ}{\mathbb{Z}}
\newcommand{\cA}{\mathcal{A}}
\newcommand{\cB}{\mathcal{B}}
\newcommand{\cC}{\mathcal{C}}
\newcommand{\cD}{\mathcal{D}}
\newcommand{\cE}{\mathcal{E}}
\newcommand{\cF}{\mathcal{F}}
\newcommand{\cG}{\mathcal{G}}
\newcommand{\cH}{\mathcal{H}}
\newcommand{\cI}{\mathcal{I}}
\newcommand{\cM}{\mathcal{M}}
\newcommand{\cP}{\mathcal{P}}
\newcommand{\cR}{\mathcal{R}}
\newcommand{\cT}{\mathcal{T}}
\newcommand{\cU}{\mathcal{U}}
\newcommand{\cV}{\mathcal{V}}
\newcommand{\ccD}{\mathscr{D}}
\newcommand{\ccE}{\mathscr{E}}
\newcommand{\ccH}{\mathscr{H}}
\newcommand{\ccM}{\mathscr{M}}
\newcommand{\ccR}{\mathscr{R}}
\colorlet{symbols}{black!50}
\colorlet{testcolor}{green!60!black}
\definecolor{connection}{rgb}{0.7,0.1,0.1}
\definecolor{lblue}{rgb}{0.1,0.5,1}
		\pgfmathsetlength{\pgf@xb}{\pgfkeysvalueof{/pgf/outer xsep}}%
		\pgfmathsetlength{\pgf@yb}{\pgfkeysvalueof{/pgf/outer ysep}}%
		\pgfextractx{\pgf@xa}{\southwest}%
		\pgfextracty{\pgf@ya}{\southwest}%
		\pgfextractx{\pgf@xb}{\northeast}%
		\pgfextracty{\pgf@yb}{\northeast}%
\tikzset{
	xi/.style={very thin,circle,fill=white,draw=black,inner sep=0pt,minimum size=1.1mm},
	zeta1/.style={very thin,circle,fill=black!18,draw=black,inner sep=0pt,minimum size=1.1mm},
	zeta2/.style={very thin,circle,fill=black,draw=black,inner sep=0pt,minimum size=1.1mm},
	eta/.style={thin,rectangle,fill=red!30,draw=red,inner sep=0pt,minimum size=1.1mm},
	aeta/.style={very thin,rectangle,fill=white,draw=black,inner sep=0pt,minimum size=1.1mm},
	etab/.style={thin,rectangle,fill=red!30,draw=red,inner sep=0pt,minimum size=1.6mm},
	etabx/.style={cross2,fill=red!30,draw=red,inner sep=0pt,minimum size=1.6mm},
	aetabx/.style={cross2,fill=white,draw=black,inner sep=0pt,minimum size=1.6mm},
	aetab/.style={very thin,rectangle,fill=white,draw=black,inner sep=0pt,minimum size=1.6mm},
	xix/.style={crosscircle,fill=white,draw=black,inner sep=0pt,minimum size=1.2mm},
	xib/.style={very thin,circle,fill=white,draw=black,inner sep=0pt,minimum size=1.6mm},
	zeta2b/.style={very thin,circle,fill=black,draw=black,inner sep=0pt,minimum size=1.6mm},	
	zeta1b/.style={very thin,circle,fill=black!18,draw=black,inner sep=0pt,minimum size=1.6mm},	
	xibx/.style={crosscircle,fill=white,draw=black,inner sep=0pt,minimum size=1.6mm},
	not/.style={thin,circle,fill=black,draw=black,inner sep=0pt,minimum size=0.3mm},
	>=stealth,
	root/.style={circle,fill=testcolor,inner sep=0pt, minimum size=2mm},
	dot/.style={circle,fill=black,inner sep=0pt, minimum size=1mm},
	var/.style={circle,fill=black!10,draw=black,inner sep=0pt, minimum size=2mm},
	dotred/.style={circle,fill=black!50,inner sep=0pt, minimum size=2mm},
	generic/.style={semithick,shorten >=1pt,shorten <=1pt},
	dist/.style={ultra thick,draw=testcolor,shorten >=1pt,shorten <=1pt},
	testfcn/.style={ultra thick,testcolor,shorten >=1pt,shorten <=1pt,->},
	G/.style={semithick,shorten >=1pt,shorten <=1pt,black,->},
	Gdiff/.style={dashed,semithick,shorten >=1pt,shorten <=1pt,black,->},
	rho/.style={dotted,semithick,shorten >=1pt,shorten <=1pt},
	renorm/.style={shape=circle,fill=white,inner sep=1pt},
	labl/.style={shape=rectangle,fill=white,inner sep=1pt},
}
\def\DeclareSymbol#1#2#3{\expandafter\gdef\csname MH@symb@#1\endcsname{\tikz[baseline=#2,scale=0.15,draw=symbols]{#3}}\expandafter\gdef\csname MH@symb@#1s\endcsname{\scalebox{0.7}{\tikz[baseline=#2,scale=0.15,draw=symbols]{#3}}}}
\def\<#1>{\csname MH@symb@#1\endcsname}
\begin{document}

\title{Construction and spectrum of the Anderson Hamiltonian with white noise potential on $\R^2$ and $\R^3$}
\author{Yueh-Sheng Hsu\footnote{Universit\'e Paris-Dauphine, PSL University, CNRS, UMR 7534, CEREMADE, 75016 Paris, France. \email{hsu@ceremade.dauphine.fr}}\; and Cyril Labb\'e\footnote{Universit\'e Paris Cit\'e, Laboratoire de Probabilit\'es, Statistique et Mod\'elisation, UMR 8001, F-75205 Paris, France. \email{clabbe@lpsm.paris}}}

\vspace{2mm}

\date{\today}
\maketitle

\begin{abstract}
	
	We propose a simple construction of the Anderson Hamiltonian with white noise potential on $\R^2$ and $\R^3$ based on the solution theory of the parabolic Anderson model. It relies on a theorem of Klein and Landau~\cite{KL81} that associates a unique self-adjoint generator to a symmetric semigroup satisfying some mild assumptions. Then, we show that almost surely the spectrum of this random Schr\"odinger operator is $\R$. To prove this result, we extend the method of Kotani~\cite{Kot85} to our setting of singular random operators.
	
	\medskip
	
	\noindent
	{\bf AMS 2010 subject classifications}: Primary 35J10, 60H15; Secondary 47A10. \\
	\noindent
	{\bf Keywords}: {\it Anderson Hamiltonian; white noise; random Schr\"odinger operator; regularity structures; parabolic Anderson model; spectrum; self-adjointness.}
\end{abstract}

\setcounter{tocdepth}{1}
\tableofcontents

\section{Introduction}\label{sec:intro}

This article establishes the rigorous construction and some spectral properties of the random Schr\"odinger operator
$$ \cH := -\Delta + \xi\;,\quad\mbox{ on }\R^d\;,$$
where $d\in\{2,3\}$, $\Delta$ is the continuous Laplacian and $\xi$ is a white noise on $\R^d$.\\

Let us mention that the spectral properties of random Schr\"odinger operators $-\Delta + V$ have received much attention in the mathematical physics community, most notably in the discrete case where $\Delta$ is the Laplacian on $\Z^d$ and $V(k),k\in\Z^d$ is a collection of i.i.d.~r.v. In particular, the Anderson Localization phenomenon~\cite{Anderson58} has given rise to a large literature. The potential considered in the present article arises as a scaling limit of a large class of discrete or continuous random fields, thus motivating the study of the operator $\cH$. From a physical perspective, this operator can be seen as an idealized model in a situation where the potential $V$ has very small correlation length.\\

Since white noise is distribution-valued, the mere definition of the operator $\cH$ is delicate. Actually, the operator requires renormalisation by infinite constants and recent breakthroughs~\cite{GIP,Hai14} in the field of stochastic PDE provide the required tools to carry out this task. Let us give a brief description of the renormalisation procedure. One considers the smoothed out noise $\xi_\eps := \xi * \varrho_\eps$ where $\varrho_\eps(x) := \eps^{-d} \varrho(x/\eps)$, $x\in\R^d$, for some even, smooth function $\varrho$ compactly supported in the unit ball of $\R^d$ and integrating to $1$. If one replaces $\xi$ by $\xi_\eps$ then the definition of the operator falls into the scope of classical results~\cite{Kat72,FL74}. However the sequence of operators does not converge as $\eps\downarrow 0$. Instead, one needs to identify carefully a diverging (with $\eps$) constant $C_\eps$ such that the collection of self-adjoint operators
$$ \cH_\eps := -\Delta + \xi_\eps + C_\eps\;,$$
converges to some limit that we call $\cH$. Let us mention that $C_\eps$ diverges as $(2\pi)^{-1} \log \eps^{-1}$ in dimension $2$, while in dimension $3$ it diverges as $c_\varrho \eps^{-1} + \ln \eps^{-1}$ for some constant $c_\varrho>0$.\\

This programme has been carried out in finite volume (a torus, a bounded box) in dimensions $2$ and $3$ by several authors~\cite{AC15,GUZ,Lab19,Cv21,Mou21,MvZ,BDM}. The convergence of $\cH_\eps$ towards $\cH$ then holds in the norm resolvent sense, in probability. Let us point out that in finite volume, the operator is bounded from below so that its resolvent set admits some real values and fixed point argument can be applied to build the associated resolvent operators.\\
It turns out that in infinite volume the construction is much harder. The main reason is that the operator is no longer bounded from below (see for instance the large volume asymptotics on the ground state~\cite{Cv21,HL22}) so that the above stragegy of proof is no longer applicable.\\
In a recent work, Ugurcan~\cite{Ugu22} proposed a construction of $\cH$ on $\R^2$ by adapting a celebrated criterion~\cite{FL74} of Faris and Lavine ensuring essential self-adjointness for Schr\"odinger operators. Let us however point out that Ugurcan does not address the convergence of $\cH_\eps$ towards $\cH$. More recently, Ueki~\cite{Uek23} proposed another construction of $\cH$ on $\R^2$, based on the heat semigroup approach of~\cite{Mou21}: therein, it is proven that $\cH_\eps$ converges in the strong resolvent sense towards $\cH$ and that the spectrum of $\cH$ is almost surely equal to $\R$.\\
On the other hand, the construction of $\cH$ on $\R^3$ has not been addressed so far.\\

The two \BLUE{aforementioned} works manage to identify the domain of the random operator $\cH$ on $\R^2$ using the paracontrolled calculus. Let us point out that these works are technically quite involved. In the present article, we present a relatively simple construction of $\cH$, in both dimensions $2$ and $3$, which is based on the solution theory of the parabolic Anderson model (PAM)
\begin{equation}\label{Eq:PAM}
	\begin{cases}
		\partial_t u = \Delta u - u\xi\;,\quad \mbox{ on }\R^d\;,\\
		u(t=0,\cdot) = f(\cdot)\;.
	\end{cases}
\end{equation}
The construction of this PDE is a priori less delicate than the construction of the self-adjoint operator $\cH$: indeed, solution theory for PDE is flexible as one simply has to identify a (reasonable) space in which existence and uniqueness holds, while self-adjointness is a delicate property that requires to identify a (random) dense subset of $L^2$ on which the operator acts. Let us point out that the construction of the (PAM) was established in~\cite{HL15,HL18} respectively on $\R^2$ and $\R^3$.\\
We define the operator $\cH$ as the generator of the semigroup associated to the collection of solutions to this PDE. This is the content of our first result:

\begin{theorem}\label{Th:Construction}
	In dimensions $2$ and $3$, there exists a random operator $\cH$ which is self-adjoint on (a dense subset of) $L^2(\R^d,dx)$ and which is the limit in probability of $\cH_{\epsilon}$ as $\epsilon\downarrow 0$ in the strong resolvent sense. For any $t\ge 0$, the domain of the operator $e^{-t\cH}$ contains all functions $f\in L^2(\R^d,dx)$ with compact support and $e^{-t\cH} f$ coincides with the solution of \eqref{Eq:PAM} at time $t$, starting from $f$ at time $0$.
\end{theorem}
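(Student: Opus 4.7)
The plan is to first construct a symmetric semigroup on a dense domain of $L^2(\R^d,dx)$ directly from the solution map of the parabolic Anderson model \eqref{Eq:PAM}, apply the Klein--Landau criterion of \cite{KL81} to extract its unique self-adjoint generator $\cH$, and finally obtain the strong resolvent convergence $\cH_\epsilon\to\cH$ by transferring the convergence of PAM solutions into a semigroup convergence statement.

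More concretely, let $\cD$ denote the dense subspace of $L^2(\R^d,dx)$ of compactly supported square-integrable functions. For $f\in\cD$ and $t\ge 0$, the solution theory of \cite{HL15,HL18} produces almost surely a unique solution $u(t,\cdot)$ to \eqref{Eq:PAM} with initial condition $f$, lying in a space of continuous functions with mild growth at infinity. I set $T_tf:=u(t,\cdot)$. Using weighted estimates for PAM solutions started from compactly supported data, one verifies that $T_tf\in L^2(\R^d)$, and that $T_t$ maps $\cD$ into a slightly larger, still dense, subspace on which $T_s$ remains defined, so that the composed flow $T_sT_tf$ makes sense. The semigroup identity $T_{t+s}f=T_t(T_sf)$ on $\cD$ is then a direct consequence of the uniqueness part of PAM well-posedness.

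The key analytic point is the symmetry $\langle T_tf,g\rangle=\langle f,T_tg\rangle$. I would approximate $\xi$ by the smooth noise $\xi_\epsilon$ with counterterm $C_\epsilon$ as in the introduction: for each $\epsilon>0$, $\cH_\epsilon=-\Delta+\xi_\epsilon+C_\epsilon$ is a classical self-adjoint Schr\"odinger operator with symmetric semigroup, and Feynman--Kac identifies $e^{-t\cH_\epsilon}f$ with the solution of the regularised PAM. The convergence results of \cite{HL15,HL18} yield $e^{-t\cH_\epsilon}f\to T_tf$ in probability in a suitable weighted topology, which is strong enough to pass the identity $\langle e^{-t\cH_\epsilon}f,g\rangle=\langle f,e^{-t\cH_\epsilon}g\rangle$ to the limit for $f,g\in\cD$. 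Strong measurability of $t\mapsto T_tf$ is inherited from the continuity in time of the PAM solution. All the hypotheses of Klein--Landau \cite{KL81} are then in place and provide a unique self-adjoint operator $\cH$ such that $(e^{-t\cH})_{t\ge 0}$ extends $(T_t)_{t\ge 0}$; by construction $\cD$ lies in the domain of $e^{-t\cH}$ and $e^{-t\cH}f$ coincides with the PAM solution at time $t$, which is the last assertion of the theorem.

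Finally, the convergence $\cH_\epsilon\to\cH$ in the strong resolvent sense follows from $e^{-t\cH_\epsilon}f\to e^{-t\cH}f$ in $L^2$ for $f$ in a dense set via a Trotter--Kato type correspondence between semigroup and resolvent convergence for self-adjoint operators, combined with uniform-in-$\epsilon$ control in a common half-plane of the resolvent sets that one extracts from the PAM estimates. The main obstacle I anticipate is the construction step rather than the abstract machinery: one must prove that the PAM flow maps compactly supported $L^2$ data into $L^2$, which requires genuine spatial decay estimates for $u(t,\cdot)$ valid in infinite volume, and one must ensure that the symmetry of $e^{-t\cH_\epsilon}$ survives the random limit in the (singular) PAM topology. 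Once these two analytic ingredients are in hand, verifying the Klein--Landau hypotheses and invoking Trotter--Kato become essentially formal.
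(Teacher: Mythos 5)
Your overall plan---build a symmetric semigroup from the PAM solution map, invoke Klein--Landau \cite{KL81} to extract its unique self-adjoint generator, and transfer the convergence $\cH_\eps\to\cH$ through the semigroup---is exactly the route the paper takes. However, two steps in your outline cover over genuine difficulties.

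First, the Klein--Landau theorem requires a family of time-dependent domains $\cD_t$ that \emph{shrink} as $t$ grows, together with the propagation $P_s\cD_t\subset\cD_{t-s}$. You cannot work with $\cD_t\equiv\cD$ the compactly supported functions: the PAM flow immediately destroys compact support. You hint at ``a slightly larger, still dense, subspace,'' but the actual input the theorem needs is the nested family; in the paper these are exponentially weighted $L^2$ spaces, $\cD_t = e^{-V}L^2_{e_{-t-\delta}}$ in $d=2$ and $\cD_t = L^2_{e_{-t}}$ in $d=3$, chosen precisely so that the heat flow sends $L^2_{e_{-t}}$ into $L^2_{e_{-(t-s)}}$ after time $s$. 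Establishing the weighted PAM estimates that make this work (Theorems~\ref{Th:PAM2d} and~\ref{Th:PAM3d}) is the analytic heart of the construction, and you correctly flag it as the main obstacle, but you should be explicit that the shrinking-domain structure is what Klein--Landau is built around, not an afterthought.

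Second, and more seriously, the appeal to a ``Trotter--Kato type correspondence'' for the strong resolvent convergence does not apply as stated. The classical Trotter--Kato theorem handles \emph{bounded} (contraction or quasi-contraction) semigroups with a uniform growth bound, and deduces convergence of resolvents on a common real half-line $(-\infty,-\omega)$. Here the operators $\cH_\eps$ and $\cH$ are \emph{not} bounded from below---uniformly or otherwise---so the ``semigroups'' $e^{-t\cH_\eps}$ are unbounded operators defined only on $t$-dependent domains, and there is no common real spectral gap to exploit. Your claim of ``uniform-in-$\eps$ control in a common half-plane of the resolvent sets extracted from the PAM estimates'' does not materialise from those estimates. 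What the paper does instead is work at the level of spectral measures: for $f$ of compact support, $\langle f, e^{-t\cH_\eps}f\rangle$ is the Laplace transform $\int e^{-t\lambda}\mu_f(d\lambda)$ of the spectral measure $\mu_f$ of $\cH_\eps$ at $f$; one proves convergence of these Laplace transforms for $t\in[0,T]$, upgrades this to weak convergence of the measures via a Montel/Widder argument (Lemma~\ref{Lemma:Laplace}), and reads off convergence of the Stieltjes transforms, i.e.~weak and hence strong resolvent convergence. If you try to make your Trotter--Kato step precise you will effectively be forced to reproduce this spectral-measure argument. Relatedly, the continuity needed to pass symmetry and spectral-measure convergence to the limit must be stated as deterministic continuity with respect to the \emph{enhanced} noise $q\in\ccM$, not merely convergence in probability in $\xi$; this is what Propositions~\ref{prop:continuity_H} and~\ref{prop:continuity_H_3d} supply.
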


The proof of Theorem \ref{Th:Construction} is split into three steps:

\begin{tabular}{ccccccc}
	\mbox{Noise} &$\overset{(1)}{\longrightarrow}$ &\mbox{Enhanced noise} &$\overset{(2)}{\longrightarrow}$ &\mbox{ (PAM) }& $\overset{(3)}{\longrightarrow}$ &\mbox{Generator}\\
	$\xi$ & &$Q(\xi)$ && $u$ && $\cH$
\end{tabular}\\

Step $(1)$ is standard in the field of singular stochastic PDEs: it associates in a measurable (but typically non-continuous) way an enhanced noise to a (typically rough) noise. The point is that the additional data contained in the enhanced noise will allow to recover continuity in the subsequent steps. Note that the renormalisation is implemented in this step.\\

Step $(2)$ is deterministic: given the enhanced noise $Q(\xi)$, one defines a solution theory to the (PAM). In dimension $2$ it relies on relatively elementary arguments that we recall in Section \ref{sec:2d}. On the other hand, the construction of (PAM) in dimension $3$ is involved and relies on the theory of regularity structures~\cite{Hai14}: we will use the results from~\cite{HL18}, see Section \ref{sec:3d}.\\

If we denote by $u^f(t,\cdot)$ the solution of (PAM), then $P_t f := u^f(t,\cdot)$ is a semigroup. Step $(3)$ builds (in a deterministic way) a unique self-adjoint operator $\cH$ satisfying $P_t f = e^{-t\cH} f$. However, extracting $\cH$ from this semigroup is much more subtle than one may think.\\
A natural attempt would be to construct the resolvent of $\cH$ by integration in time of the semigroup: $\int_0^\infty e^{-at} u^f(t,\cdot) dt$. However the operator is not bounded from below so that nothing ensures that $(\cH + a)^{-1}f$ exists for real parameters $a$. In addition, the solution theory of (PAM) provides very bad a priori bounds on the growth in time of $u^f(t,\cdot)$ so that one cannot single out nice initial conditions $f$ for which the above integral in time converges. Actually these functions $f$ for which the integral in time converges should be functions that, informally speaking, ``avoid'' \BLUE{those} regions of space where $\xi$ is very large, thus confirming that one cannot identify a simple set of such functions.\\
A second approach would consist in applying Friedrich's extension\BLUE{~\cite[Thm 2.13]{Tes14}} to the quadratic forms $\langle f, u^f(t,\cdot)\rangle$ (for nice enough functions $f$) and in exploiting the semigroup property satisfied by the solution of (PAM): we did not manage to conclude with this approach. We then came \BLUE{across} a (beautiful) result of Klein and Landau~\cite{KL81} that provides the right framework. This is presented in Section \ref{sec:semigroup}, and the required estimates on the (PAM) to apply this result and deduce Theorem \ref{Th:Construction} are presented in Sections \ref{sec:2d} and \ref{sec:3d}.\\

Let us mention that the construction of the operator via its semigroup comes with nice continuity properties that allow, in particular, to prove the strong resolvent convergence of the statement, see Propositions \ref{prop:continuity_H} and \ref{prop:continuity_H_3d}. We would like to emphasise that the approach presented here is robust and can certainly be applied to a large class of singular random differential operators.\\

Our second result is the identification of the spectrum of $\cH$.

\begin{theorem}\label{Th:Spectrum}
	In dimensions $2$ and $3$, almost surely the spectrum of $\cH$ is $\R$.
\end{theorem}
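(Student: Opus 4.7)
The plan is to adapt Kotani's strategy using the ergodicity of white noise together with a Borel--Cantelli argument on disjoint spatial regions. First, white noise on $\R^d$ is stationary and ergodic under the group of spatial translations, and the construction $\xi \mapsto Q(\xi) \mapsto \cH$ from Theorem~\ref{Th:Construction} is translation-covariant. It follows that the closed random set $\sigma(\cH) \subset \R$ is invariant under the ergodic shift, hence almost surely deterministic. It therefore suffices to show that for every $\lambda\in\R$ and every $\epsilon>0$, the event $\{\sigma(\cH) \cap [\lambda-\epsilon,\lambda+\epsilon] \neq \emptyset\}$ has positive probability.

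To produce approximate eigenfunctions I would work on disjoint unit balls $B_n := B(x_n,1)$ with $|x_n|\to\infty$. Since white noise is independent over disjoint regions, the restrictions $\xi|_{B_n}$ are i.i.d. Construct a Dirichlet counterpart $\cH^D_n$ of the operator on each $B_n$ via the same PAM/semigroup approach of Theorem~\ref{Th:Construction}, now with Dirichlet boundary conditions and the very same diverging renormalisation constants $C_\epsilon$. Each $\cH^D_n$ is then self-adjoint with discrete spectrum, and the family $(\cH^D_n)_n$ is i.i.d. If one can show
$$ p := \P\bigl( \sigma(\cH^D_1) \cap [\lambda-\epsilon/2,\lambda+\epsilon/2] \neq \emptyset \bigr) > 0\; ,$$
then Borel--Cantelli yields almost surely infinitely many $n$ admitting a Dirichlet eigenpair $(\phi_n,\mu_n)$ with $\mu_n\in[\lambda-\epsilon/2,\lambda+\epsilon/2]$ and $\|\phi_n\|_{L^2(B_n)}=1$. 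Multiplying $\phi_n$ by a smooth cutoff supported strictly inside $B_n$ and applying standard commutator estimates turns these into a Weyl sequence for $\cH$ with approximate eigenvalues clustering in $[\lambda-\epsilon,\lambda+\epsilon]$. A countable intersection over rationals $\lambda$ and $\epsilon$ then gives $\sigma(\cH)=\R$ almost surely.

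The crux, and the place where Kotani's method must genuinely be extended to the singular setting, is to show $p>0$. The strategy relies on a support theorem for the enhanced noise: the law of $Q(\xi)|_{B_1}$ should charge every non-empty open set in its natural (H\"older / paracontrolled / regularity-structures) topology. One then fixes a smooth potential $g$ on $B_1$ for which the classical Dirichlet problem $-\Delta^D + g$ has $\lambda$ as an eigenvalue (which can always be arranged by tuning a constant part of $g$), performs a Cameron--Martin shift of $\xi$ by a smooth element matching $g$, and uses continuity of Dirichlet eigenvalues with respect to the enhanced noise (inherited from the continuity properties of the PAM solution map) to conclude.

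The main obstacle is twofold. First, one must define $\cH^D_n$ consistently with $\cH$: the same renormalisation must be performed in finite volume with Dirichlet boundary data, so that eigenvalues of $\cH^D_n$ are legitimate approximate spectral values of $\cH$. Second, one must establish the support theorem for the enhanced noise restricted to a ball, and the continuity of Dirichlet eigenvalues with respect to the enhanced noise, while controlling how Cameron--Martin shifts interact with the diverging counterterms $C_\epsilon$. Both ingredients are absent from Kotani's classical 1D setting and constitute the main technical effort; once they are in place, the remainder of the argument is a routine Weyl-sequence construction combined with Borel--Cantelli.
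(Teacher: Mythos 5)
Your opening paragraph (ergodicity, translation covariance, deterministic almost sure spectrum $\Sigma$) matches the paper. From there, however, you diverge sharply, and the divergence is where the difficulty lies.

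The paper's proof never attempts to build a Weyl sequence for the singular operator $\cH$. It builds Weyl sequences only for the \emph{smooth} operator $\cH_\eps = -\Delta + \xi_\eps + C_\eps$ (Proposition \ref{prop:Sigma_eps}), which is elementary since $\xi_\eps$ is a locally bounded function. The transfer to $\cH$ is then done purely at the level of enhanced noises and resolvents: one shows $\supp(Q_\eps)\subset\supp(Q)$ (Theorems \ref{Th:Support2d} and \ref{Th:Support3d}) and combines this with strong resolvent continuity of $q\mapsto H(q)$ (Propositions \ref{prop:continuity_H} and \ref{prop:continuity_H_3d}) to deduce, via weak convergence of spectral measures and the portmanteau inequality, that $\Sigma_\eps\subset\Sigma$. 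This is the actual content of Kotani's argument, properly extended to the enhanced-noise topology, and the point is that it circumvents entirely the need to handle the domain of $\cH$. Your proposal mentions Kotani at the outset but does not implement this mechanism; what you describe is instead the classical disjoint-boxes plus Borel--Cantelli strategy, which is a different approach.

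Your route has a concrete gap exactly where the paper's route is designed to avoid one. You assert that multiplying a Dirichlet eigenfunction $\phi_n$ by a smooth cutoff and ``applying standard commutator estimates'' yields a Weyl sequence for $\cH$. In the singular setting this is not standard: the domain of $\cH$ consists of non-smooth functions whose structure is encoded by the enhanced noise (paracontrolled ansatz or modelled distribution), $\cH(\chi_n\phi_n)$ involves a renormalized product, and the commutator $[\cH,\chi_n]$ is not simply $[-\Delta,\chi_n]$ acting on a nice function. The paper explicitly flags this difficulty (``the domain of the operator is made of non-smooth functions that depend in a subtle way on the enhanced noise'') and notes that Ueki needed ``delicate approximation arguments'' to push a Weyl-sequence strategy through in dimension $2$ alone; dimension $3$ is harder still. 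Calling this step routine is the crux of the gap. In addition, even at the classical level your setup is off: you work on \emph{fixed} unit balls $B(x_n,1)$, so the cutoff $\chi_n$ has derivatives of order one and the commutator term $[-\Delta,\chi_n]\phi_n$ has $L^2$-norm of order one, not tending to zero. The classical argument needs balls of growing radius $R_n\to\infty$ so that $\|\nabla\chi_n\|_\infty\sim R_n^{-1}\to 0$. Finally, you would also need to construct and renormalize the Dirichlet operators $\cH^D_n$ compatibly with $\cH$ (not done in this paper) and prove a support theorem plus eigenvalue continuity for that auxiliary family. None of this is needed in the paper's proof, which only requires Weyl sequences for $\cH_\eps$, the support inclusion at the level of $Q_\eps$ and $Q$, and strong resolvent continuity.
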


The fact that the spectrum is almost surely a deterministic set is a consequence of the ergodicity of white noise combined with the following commutation property involving the shifted noise $\theta_x \xi(\cdot) = \xi(\cdot-x)$ and the translation operators $\cT_x f(\cdot) = f(\cdot+x)$ (see Section \ref{sec:spectrum} for more details)
$$ \cH(\theta_x \xi) = \cT_x^* \cH(\xi) \cT_x\;,\quad \forall x\in\R^d\;.$$
While this property is clear at an informal level, to establish it rigorously one needs to construct the enhanced noise associated to the shifted noise $\theta_x \xi$ simultaneously for all $x\in \R^d$. This is the purpose of Lemmas \ref{lem:Omega02d} and \ref{lem:Omega03d}.\\

A usual strategy to show that some value $\lambda \in \R$ belongs to the spectrum of a self-adjoint operator $A$ is to build a Weyl sequence, namely, a sequence of functions $f_n$ \BLUE{in the domain of $A$ and} of unit $L^2$-norm which are such that $(A-\lambda)f_n$ converges to $0$ in $L^2$.
This argument is implemented for the operator $\cH_\eps$ in Proposition \ref{prop:Sigma_eps}, and allows to show that the spectrum of the latter is the whole line $\R$ a.s. For the limiting operator $\cH$ however, this argument is delicate to implement since the domain of the operator is made of non-smooth functions that depend in a subtle way on the enhanced noise. Let us mention that Ueki~\cite{Uek23} managed to implement Weyl sequences in dimension $2$ using delicate approximation arguments, and then showed that the spectrum is the whole line $\R$ almost surely.\\
In the present work, we rely on a different argument that works both in dimensions $2$ and $3$. The underlying idea comes from a result of Kotani~\cite{Kot85} which can be spelled out in the following way: if one shows that the (topological) support of $\xi_\eps$ is included into the (topological) support of $\xi$, then the spectrum of $\cH_\eps$ is included in the spectrum of $\cH$. Since Proposition \ref{prop:Sigma_eps} shows that the former is $\R$, we deduce the asserted result.\\
While this informal idea is rigourous for function-valued potentials as shown in~\cite{Kot85}, in the singular context considered in this article it is only heuristic. Indeed, the argument of Kotani relies on the continuity of the operator w.r.t.~to the noise: in the present situation, the operator is only measurable w.r.t.~the noise but is continuous w.r.t.~the enhanced noise. Consequently, we need to show that the (topological) support of the \emph{enhanced noise} associated to $\xi_\eps$ is included into the (topological) support of the enhanced noise associated to $\xi$, and we will show in Section \ref{sec:spectrum} that we can extend the argument of Kotani to that setting.\\
In dimension $2$, we provide a complete proof of the inclusion of the supports of the enhanced noises, see Subsection \ref{subsec:supports2d}. In dimension $3$, we rely on a general result of Hairer and Sch\"onbauer~\cite{HS21} that establishes a support theorem for singular stochastic PDE, see Subsection \ref{subsec:supports3d}.

\subsubsection*{Notations}
Let us introduce some notations for the rest of the article. For some given parameters $a > 0$ and $\ell \in\R$, we introduce the weight functions
\begin{equation}\label{Eq:weights}
	p_a(x) = (1+|x|)^a\;,\quad e_\ell(x) = e^{\ell(1+|x|)}\;,\quad x\in\R^d\;.
\end{equation}
Let us mention that for any given $a>0$, there exists a constant $C>0$ such that for all $0\le s\le t$ and all $\ell\in\R$
\begin{equation}\label{Eq:TrickWeights}
	\sup_{x\in\R^d} \frac{p_a(x) e_{\ell+s}(x)}{e_{\ell+t}(x)} \le C (t-s)^{-a}\;.
\end{equation}
For $w=p_a$ or $w=e_\ell$, we let $L^2_w$ be the weighted version of the usual $L^2$ space: its norm is defined through
$$ \|f\|_{L^2_w}^2 := \int_{x\in\R^d} \frac{\vert f(x)\vert^2}{w(x)} dx\;.$$
For $\alpha < 0$, we let $\cC^{\alpha}_w$ be the closure of all compactly supported, smooth functions under the norm
\[
\| f\|_{\cC^{\alpha}_w} := \sup_{x\in \R^d} \sup_{\lambda \in (0,1]} \sup_{\varphi \in \cB_{\lceil -\alpha \rceil}} \frac{\vert \langle f,\varphi^\lambda_x\rangle \vert }{w(x)}\;,
\]
where $\varphi^\lambda_x(y) := \lambda^{-d} \varphi((y-x)/\lambda)$ and $\cB_r$ is the set of all functions, supported in the unit ball of $\R^d$, whose $C^r$-norm is bounded by $1$. Similarly we let $\cH^{\alpha}_w$ be the weighted Besov space with parameters $p=q=2$ (or equivalently, the weighted Sobolev space) of regularity index $\alpha$.\\
We point out a fact that will be used many times in this article. Let $\chi:\R^d\to [0,1]$ be a compactly supported smooth function such that $\sum_{k\in\Z^d} \chi(\cdot-k) =1$. Then for any given weight $w$, there exists a constant $C>0$
\begin{equation}\label{Eq:Calphaw}
	\| f\|_{\cC^{\alpha}_w} \le C \sup_{k\in\Z^d} \frac{\| f(\cdot)  \chi(\cdot-k)\|_{\cC^{\alpha}}}{w(k)}\;.
\end{equation}

\paragraph{Acknowledgements.} The work of CL was partially funded by the ANR project Smooth ANR-22-CE40-0017. CL would like to thank Martin Hairer and Lorenzo Zambotti for fruitful discussions on algebraic aspects of the theory of regularity structures. In addition, we would like to thank the two anonymous referees for their careful reading of our article.

 
\section{Self-adjoint generator of unbounded symmetric semigroup}\label{sec:semigroup}

This section presents a general result due to Klein and Landau. To ease the reading, let us anticipate its application to the context of (PAM): we will take $P_t f$ as the solution $u(t,\cdot)$ of the PDE \eqref{Eq:PAM} starting from $f$ and the domain $\cD_t$ will consist of a deterministic set of functions $f\in L^2(\R^d)$ which decay sufficiently fast at infinity; let us emphasise that the decay rate will depend on $t$ so that the sets $\cD_t$ will decrease with time. In this context, it seems quite difficult to extract the resolvents from the semigroup. Instead, Klein and Landau construct the spectral measures to establish the following theorem.

\begin{theorem}[\cite{KL81}]\label{Th:Klein-Landau}
	\BLUE{Fix $T>0$.} Let $(P_t)_{t \in [0, T]}$, be a collection of unbounded operators on a Hilbert space $\mathfrak{H}$ such that $P_0 = I$ and $P_t$ has domain $\cD_t$. Assume $(P_t)_{t \in [0, T]}$ satisfies the following properties:
	\begin{enumerate}
		\item (Non-increasing domains with dense union) For $s \leq t$, $\cD_{t} \subset \cD_{s}$ and $\bigcup_{0 < t \leq T} \cD_t$ is dense in $\mathfrak{H}$.
		\item (Semigroup) For $0 \leq s \leq t \leq T$, $P_s \cD_t \subset \cD_{t-s}$ and $P_{t-s} P_s = P_t$ on $\cD_t$.
		\item (Symmetry) For $f, g \in \cD_t$, $\crochet{f, P_t g}_\mathfrak{H} = \crochet{P_t f, g}_\mathfrak{H}$.
		\item (Weak continuity) For $t \in (0, T]$ and $f \in \cD_t$, the function $s \mapsto \crochet{f, P_s f}_\mathfrak{H}$ is continuous on the interval $[0, t]$.
	\end{enumerate}
	
	Then, there exists a unique self-adjoint operator $H$ on $\mathfrak{H}$ such that for every $t\in [0,T]$, $\cD_t \subset \cD(e^{-tH})$ and such that $P_t$ is the restriction of $e^{-tH}$ to $\cD_t$. Moreover, the operator $H$ is essentially self-adjoint on the core $\hat{\cD} := \bigcup_{0 < t \leq S} \bigcup_{0 < s < t} P_s \cD_t$, for any $S \in (0, T]$.
\end{theorem}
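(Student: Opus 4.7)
The plan is to recover the operator $H$ from its spectral resolution, which in turn we extract from the scalar data $\phi_f(s):=\crochet{f,P_s f}_{\mathfrak{H}}$, $s\in[0,t]$, indexed by $f\in\cD_t$. Fix such $f$ and $t$. The non-increasing domain and semigroup hypotheses ensure that $P_{s_2}P_{s_1}f = P_{s_1+s_2}f$ for all $s_1,s_2\ge 0$ with $s_1+s_2\le t$; the symmetry hypothesis then yields the key identity
\[
\phi_f(s_1+s_2) \;=\; \crochet{P_{s_1}f,P_{s_2}f}_{\mathfrak{H}}\;.
\]
Consequently the Hankel-type matrices $\bigl[\phi_f(s_i+s_j)\bigr]_{i,j=1}^n$ are Gram matrices, hence positive semi-definite, whenever the $s_i$ lie in $[0,t/2]$. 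Together with the weak continuity hypothesis, this places $\phi_f$ in the scope of a moment-problem representation theorem: there should exist a unique positive Borel measure $\mu_f$ on $\R$ with
\[
\phi_f(s) \;=\; \int_{\R} e^{-s\lambda}\,d\mu_f(\lambda)\qquad\text{for all }s\in[0,t]\;.
\]

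This representation is the central obstacle. Because $H$ is not assumed bounded below, $\mu_f$ may carry significant mass on $(-\infty,0)$ and the Laplace integral will typically diverge for $s\notin[0,t]$, so one is dealing with a finite-interval version of a two-sided moment problem rather than a classical complete-monotonicity situation. I would follow the strategy of~\cite{KL81}: discretise the parameter $s$ to reduce matters to a Hausdorff moment problem on a compact interval, pass to the limit in the mesh, and exploit analyticity of one-sided Laplace transforms to obtain uniqueness of $\mu_f$. Consistency of the $\mu_f$ as $t$ varies is then immediate from this uniqueness for $f\in\cD_t\cap\cD_{t'}$.

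With the family $\{\mu_f\}$ in hand, extend to a complex bilinear family $\mu_{f,g}$ by polarisation, and use the density of $\bigcup_{t>0}\cD_t$ in $\mathfrak{H}$ to patch these into a unique projection-valued measure $E$ on $\R$; set $H:=\int_{\R}\lambda\,dE(\lambda)$. By construction $\crochet{f,e^{-tH}f}_{\mathfrak{H}}=\phi_f(t)=\crochet{f,P_t f}_{\mathfrak{H}}$ for every $f\in\cD_t$, and polarisation upgrades this quadratic identity to the vector identity $e^{-tH}f=P_t f$ on $\cD_t$. Uniqueness of $H$ then follows since its semigroup is pinned down on a dense subspace. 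Finally, for $g=P_s f$ with $f\in\cD_t$ and $0<s<t$ one computes $d\mu_g(\lambda)=e^{-2s\lambda}\,d\mu_f(\lambda)$, whose exponential moments of all orders are finite; such $g$ are thus analytic vectors for $H$, so Nelson's theorem yields essential self-adjointness on $\hat{\cD}$.
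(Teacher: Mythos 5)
Your first half matches the paper's route exactly: the reflection positivity of $s\mapsto\crochet{f,P_sf}_\mathfrak{H}$ (Gram matrices built from $P_{s_i}f$), combined with the continuity hypothesis, places $\phi_f$ in the scope of Widder's representation theorem, yielding the Laplace representation $\phi_f(s)=\int_\R e^{-s\lambda}\,\mu_f(\dd\lambda)$ on $[0,t]$. Your identification $\mu_{P_\sigma f}(\dd\lambda)=e^{-2\sigma\lambda}\mu_f(\dd\lambda)$ and the ensuing Nelson-type argument for essential self-adjointness on $\hat\cD$ are also sound ideas (the paper's sketch does not spell out the essential self-adjointness step at all, so this is a welcome addition).

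The gap is in ``patch these into a unique projection-valued measure $E$.'' This is not a routine step and it is the heart of the whole construction. What the family $\{\mu_{f,g}\}$ gives you directly, via $\mu_f(\R)=\lim_{s\downarrow0}\|P_{s/2}f\|^2=\|f\|^2$ and Cauchy--Schwarz, is a family of bounded operators $E(A)$ with $0\le E(A)\le I$, $E(\R)=I$ and countable additivity in the weak sense: a positive-operator-valued measure. But a POVM is in general \emph{not} a PVM; the multiplicativity $E(A)E(B)=E(A\cap B)$ — equivalently $E(A)^2=E(A)$ — does not follow from the scalar data alone, and you give no argument for it. This is precisely what Klein and Landau avoid by a different second half, which the paper summarises: since $\int e^{-sa}\,\nu(\dd a)<\infty$ for $s\in[0,t]$, they extend $s\mapsto P_sf$ holomorphically to the strip $\{0\le\Re z\le t\}$ by $F_f(z)=\int e^{-za}\nu(\dd a)$, read off $\norm{F_f(iy)}_\mathfrak{H}=\norm{F_f(0)}_\mathfrak{H}=\norm{f}_\mathfrak{H}$ from the bilinear identity $\crochet{F_f(z_1),F_f(z_2)}=\int e^{-(\bar z_1+z_2)a}\nu(\dd a)$, and so obtain a densely defined isometry $U(y)f:=F_f(iy)$ which extends to a strongly continuous unitary one-parameter group; Stone's theorem then produces the self-adjoint $H$, and with it the PVM you were trying to construct by hand. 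The analytic continuation to the imaginary axis is the key step absent from your plan. (One could also bypass the PVM altogether by defining $H_0g:=\lim_{s\downarrow0}(g-P_sg)/s$ on $\hat\cD$, showing symmetry and the analytic-vector bounds directly from $\mu_f$, and invoking Nelson; but then the identity $e^{-tH}f=P_tf$ on $\cD_t$ still needs a separate verification, which your write-up also leaves implicit in the polarisation remark.)
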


	Let us summarize the main steps of the proof. The key observation is the following: for any given $f \in \cD_t, t \in (0, T]$, the assumptions of the theorem ensure that the map $r:s \mapsto \norm{P_{s/2}f}^2_{\mathfrak{H}}$ is continuous, non-negative on $[0, t]$ and satisfies what the authors of \cite{KL81} call \emph{Osterwalder-Schrader positivity}, or \emph{reflection positivity} in the mathematical physics community: for all $(t_i)_{1\le i \le n} \in [0,T]$ and all $(c_i)_{1\le i \le n} \in\R$
	$$ \sum_{j,k=1}^n c_j c_k r(t_j+t_k) \ge 0\;.$$
	A theorem of Widder~\cite{Widder}, which can be thought of as a real version of the theorem of Bochner on characteristic functions of probability measures, implies that the map $r$ can be represented as the Laplace transform of a unique positive measure $\nu$ on $\R$, i.e. $\norm{P_{s/2} f}^2_{\mathfrak{H}} = \int_\R e^{-sa} \nu(\dd a)$. Let us point out that $\nu$ turns out to be the spectral measure (associated to $f$) of the self-adjoint operator $H$ to be defined.
	
	Since the Laplace transform of $\nu$ exists for $s \in [0, t]$, we can consider $\int_\R e^{-za} \nu(\dd a)$ for all $z \in \C$ with $\Re z \in [0, t]$. This allows to analytically extend the $\mathfrak{H}$-valued function $s \mapsto P_s f$ into a function $F_f$ defined on $\{z \in \C: \Re z \in [0, t]\}$ in such a way that $F_f(s) = P_s f$ for $s \in [0, t]$ and that $\crochet{F_f (z_1), F_f(z_2)}_\mathfrak{H} = \int_\R e^{-(\bar{z_1} + z_2)a} \nu(\dd a)$ for any $z_1, z_2 \in \C$ with $\Re(z_j) \in [0, T]$. As one can show that $F_f(z)$ is linear in $f$, it makes sense to set $U(y) f = F_f (iy)$ for all $f \in \cD_t, y \in \R$ and one has $\norm{U(y) f}_\mathfrak{H}^2 = \norm{F_f(0)}_\mathfrak{H}^2 = \norm{f}_\mathfrak{H}^2$. So far the construction works simultaneously for all $t \in (0, T]$, $U(y)f$ is therefore well-defined for all $f$ in the dense subspace $\bigcup_{0 < t \leq T} \cD_t$. We can thus continuously extend $U(y)$ into a unitary operator on $\mathfrak{H}$. One can then show that $(U(y))_{y \in \R}$ forms a strongly continuous one-parameter group of unitary operators. An application of Stone's Theorem then yields a unique self-adjoint operator $H$ such that $e^{-iy H} = U(y)$. By continuity $e^{-sH}f = P_s f$ for all $s\in [0,t]$.
	
	\BLUE{Let us finally observe that the operator $H$ defined in Theorem \ref{Th:Klein-Landau} does not depend on $T$. Indeed, if we consider two distinct parameters $T_1$ and $T_2$, the two operators $H_1$ and $H_2$ defined by the theorem are both essentially self-adjoint on the core $\hat{\cD}$, where we pick $S= T_1 \wedge T_2$, and this suffices to deduce that $H_1=H_2$.}
	

\section{The Anderson Hamiltonian in dimension two}\label{sec:2d}

The goal of this section is to construct the Anderson Hamiltonian with white noise potential on $\R^2$. To that end, we will feed Theorem \ref{Th:Klein-Landau} with the solution to the parabolic Anderson model
\begin{equation}
	\begin{cases}\partial_t u = \Delta u - u\xi\;,\quad \mbox{ on }\R^2\;,\\
	u(t=0,\cdot) = f(\cdot)\;.
	\end{cases}
\end{equation}
An analysis of the regularities at stake shows that this PDE is ill-defined, and, actually requires renormalization by infinite constants. Let us explain how it can be performed. Recall the smoothed out noise $\xi_\eps$ of the introduction. Denote by $u_\eps$ the solution of the above PDE in which $-u\xi$ is replaced by $-u_\eps(\xi_\eps - C_\eps)$ for some well-chosen constant $C_\eps$. Set $G\BLUE{(x)}= -\frac{\log|x|}{2\pi} \chi(x)$, \BLUE{$x\in\R^2$,} where $\chi$ is a smooth, radial, cutoff function equal to $1$ in the unit ball $B(0, 1)$ and supported in $B(0, 2)$. The function $G$ coincides with the Green's function of the Laplacian in the unit ball and one can check that $-\Delta G = \delta_0 + F$ for some smooth function $F$ supported in $B(0,2)\backslash B(0, 1)$.\\
We rely on the change of unknown $w_\eps := e^{Y_\eps} u_\eps$ where $Y_\eps := G*\xi_\eps$. It turns out that $w_\eps$ solves the following PDE
\begin{equation}\label{eq:PAM_weps}
	\begin{cases} \partial_t w_\eps = \Delta w_\eps - 2\nabla Y_\eps \cdot \nabla w_\eps + ( \vert \nabla Y_\eps\vert^2 - C_\eps + F*\xi_\eps)  w_\eps \;,\quad t>0\;, x\in \R^2\;,\\
		w_\eps(t=0,\cdot)= e^{Y_\eps} f\BLUE{(\cdot)} \;.
	\end{cases}
\end{equation}
The main observation is that there exists an appropriate choice of $C_\eps$ for which $ \vert \nabla Y_\eps\vert^2 - C_\eps$ converges in probability as $\eps\downarrow 0$ to some well-defined random distribution. In addition, the regularities at stake are such that the limit as $\eps\downarrow 0$ of this PDE is well-posed: the change of unknown ``improved'' the regularity of the random terms on the r.h.s. The enhanced noise in this specific case must include the limit in probability of $\vert \nabla Y_\eps\vert^2 - C_\eps$.\\

In Subsection \ref{subsec:Enhanced2d} we introduce the space of enhanced noises and perform the renormalisation: it corresponds to Step 1 of the introduction. In Subsection \ref{subsec:PAM} we construct the solution theory to a PDE corresponding to \eqref{eq:PAM_weps}: this implements Step 2 of the introduction. In Subsection \ref{subsec:semigroup2d} we define the semigroup and its generator: this is Step 3 of the introduction. Finally in Subsection \ref{subsec:supports2d} we characterise the support of the enhanced noise. In this whole section, $\kappa > 0$ is a small parameter (it is implicitly taken as small as needed).

\subsection{The space of enhanced noise and the white noise case}\label{subsec:Enhanced2d}

We consider the space of enhanced noises $\ccM := \cC^{-1-\kappa}_{p_\kappa} \times \cC^{-\kappa}_{p_\kappa}$ and denote by $q := (X,U)$ a generic element in this space. Endowed with the norm $\norm{q} = \norm{X}_{\cC^{-1-\kappa}_{p_\kappa} }+ \norm{U}_{\cC^{-\kappa}_{p_\kappa}}$, $\ccM$ is a separable Banach space.\\

\BLUE{Let us motivate this choice of space of enhanced noises on} a specific example. Fix some parameter $b \in (0,\kappa/2)$ and set $\Omega := \cC^{-1-\kappa}_{p_b}(\R^2)$ endowed with the law $\P$ of white noise. The canonical variable on $\Omega$ will be denoted $\xi$. We define $\xi_\eps := \xi * \varrho_\eps$, $Y_\eps := G* \xi_\eps$ and
\[ Z_\eps(x):= \vert \nabla Y_\eps(x) \vert^2 - C_\eps\;,\quad  C_\eps := \E\big[\vert \nabla Y_\eps(0) \vert^2\big]\;.\]
Note that $x\mapsto Y_\eps(x)$ is a stationary (\BLUE{G}aussian) field so that $C_\eps$ is left unchanged if we shift the point at which $\nabla Y_\eps$ is evaluated. We set $Q_\eps := (\xi_\eps,Z_\eps)$, this r.v.~takes values in $\ccM$, see for instance~\cite[Lemma 1.1 and Corollary 1.2]{HL15}. Let us give some details regarding $\xi_\eps$ since it illustrates the role played by the polynomial weight here (and since this argument will appear at several occasions later on): using \eqref{Eq:Calphaw} we find for any $n\ge 1$
\begin{equation}\label{Eq:xieps2d}
	\E\left[ \| \xi_\eps\|_{\cC^{-1-\kappa}_{p_\kappa}}^n \right] \le C \sum_{k\in\Z^d} \frac{\E\left[ \| \xi_\eps  \chi(\cdot-k)\|_{\cC^{\BLUE{-1-\kappa}}}^n \right]}{p_\kappa(k)^n}\;.
\end{equation}
The term $\E\left[ \| \xi_\eps  \chi(\cdot-k)\|_{\cC^{\BLUE{-1-\kappa}}}^n \right]$ is independent of $k$ and is finite for any $n\ge 1$: therefore, choosing $n$ large enough, the previous sum converges.\\
Note that $Z_\eps$ ``contains'' two instances of $\xi$ so that it requires a weight of order $p_{2b}$, and since we chose $b < \kappa/2$, it indeed lives in a space with weight $p_\kappa$.\\

For any $x\in \R^2$, we define the shift operator $\theta_x$ on $\Omega$ as follows: $\theta_x \xi(\cdot) := \xi(\cdot-x)$, or more formally
$$ \langle \theta_x \xi , \varphi\rangle = \langle \xi , \varphi(\cdot+x) \rangle\;,\quad \varphi \in C^\infty_c(\R^2)\;.$$
It is easy to check that the shift operators are continuous maps from $\Omega$ into itself. We naturally extend this definition to $\ccM$ by defining $\theta_x q := (\theta_x X, \theta_x U)$ for any $q=(X,U) \in \ccM$. The shift operators are continuous maps from $\ccM$ into itself.\\

To identify the spectrum of $\cH$ in Section \ref{sec:spectrum}, we will need to show a commutation property: the operator $\cH(\theta_x \xi)$ associated to the shifted noise coincides with the operator $\cH(\xi)$ conjugated with spatial shifts, see Lemma \ref{lem:shiftshift}. Therefore we need to argue that one can deal simultaneously with all the shifted noises: this is the purpose of the following technical lemma.

\begin{lemma}\label{lem:Omega02d}
	There exists a sequence $\eps_k \downarrow 0$ such that the set
	\begin{equation}\label{Eq:Omega0_2d}
		\Omega_0 : =\Big\{\xi\in\Omega: Q_{\eps_k}(\theta_x \xi) \mbox{ converges in $\ccM$ as } k\to\infty\;,\quad \forall x\in \R^2\Big\}\;,
	\end{equation}
	is of full $\P$-measure and is invariant under all $\theta_x, x\in \R^2$. The\footnote{$Q$ is defined as a limit on the set of full measure $\Omega_0$, and is extended to $\Omega$ arbitrarily.} limit $Q$ satisfies $Q(\theta_x \xi) = \theta_x Q(\xi)$ for all $\xi \in\Omega_0$.\\
	As $\eps\downarrow 0$, the field $(Q_\eps(\theta_x \xi))_{x\in \R^2}$ which takes values in $\ccM$ converges in probability, locally uniformly over $x\in \R^2$, to $(Q(\theta_x \xi))_{x\in \R^2}$.
\end{lemma}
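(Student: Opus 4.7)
The plan rests on one clean algebraic observation: for every fixed $\eps>0$, the map $Q_\eps$ is exactly equivariant, i.e.
$$ Q_\eps(\theta_x \xi) = \theta_x Q_\eps(\xi)\;,\qquad \forall x\in\R^2\;.$$
Indeed, convolution commutes with translation, so $(\theta_x\xi)_\eps = \theta_x\xi_\eps$ and $Y_\eps(\theta_x\xi) = G*\theta_x\xi_\eps = \theta_x Y_\eps(\xi)$; the pointwise square then gives $|\nabla Y_\eps(\theta_x\xi)|^2 = \theta_x|\nabla Y_\eps(\xi)|^2$; and since $C_\eps$ is a constant (independent of the base point, thanks to stationarity), it survives the subtraction unchanged, giving $Z_\eps(\theta_x\xi) = \theta_x Z_\eps(\xi)$. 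So the joint $x$-dependent object $(Q_\eps(\theta_x\xi))_{x\in\R^2}$ reduces to a single random variable $Q_\eps(\xi)$ being translated.

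Next, I would record that $\theta_x$ is a continuous endomorphism of $\ccM$ with operator norm at most polynomial in $|x|$. From the defining norm of $\cC^{\alpha}_{p_\kappa}$, a change of variable $z=y+x$ in the supremum and the trivial inequality $p_\kappa(z)\le (1+|x|)^\kappa\, p_\kappa(z-x)$ yield
$$ \|\theta_x f\|_{\cC^{\alpha}_{p_\kappa}} \le (1+|x|)^\kappa\, \|f\|_{\cC^{\alpha}_{p_\kappa}}\;,$$
hence $\|\theta_x q\|_\ccM \le (1+|x|)^\kappa \|q\|_\ccM$ for all $q\in\ccM$. It is crucial here that the weight is only polynomial, not exponential, and this is the only ``sensitive'' point of the argument.

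To construct $\Omega_0$, I would use the fact that (by~\cite{HL15}) $Q_\eps(\xi)$ converges in probability in $\ccM$ to some limit $Q(\xi)$; extracting a deterministic subsequence $\eps_k \downarrow 0$ along which the convergence holds $\P$-a.s.\ gives a full measure set $\Omega_1$ on which $Q_{\eps_k}(\xi)\to Q(\xi)$ in $\ccM$. For any $\xi\in\Omega_1$ and any $x\in\R^2$, continuity of $\theta_x$ on $\ccM$ combined with the equivariance above gives
$$ Q_{\eps_k}(\theta_x\xi) = \theta_x Q_{\eps_k}(\xi) \;\longrightarrow\; \theta_x Q(\xi) \quad\text{in }\ccM\;,$$
so $\Omega_1\subset\Omega_0$, and $\Omega_0$ is of full measure. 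Shift-invariance of $\Omega_0$ is automatic since $\theta_x\theta_y = \theta_{x+y}$. The identity $Q(\theta_x\xi)=\theta_x Q(\xi)$ on $\Omega_0$ is obtained by passing to the limit $k\to\infty$ in the relation $Q_{\eps_k}(\theta_x\xi)=\theta_x Q_{\eps_k}(\xi)$.

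Finally, for the local uniform convergence in probability, fix a compact $K\subset B(0,R)$; using equivariance and the operator norm bound,
$$ \sup_{x\in K}\|Q_\eps(\theta_x\xi) - Q(\theta_x\xi)\|_\ccM \;=\; \sup_{x\in K}\|\theta_x(Q_\eps(\xi)-Q(\xi))\|_\ccM \;\le\; (1+R)^\kappa\, \|Q_\eps(\xi)-Q(\xi)\|_\ccM\;,$$
and the right-hand side tends to $0$ in probability. Thus the full statement follows, along the whole sequence $\eps\downarrow 0$, not just along $\eps_k$. No step presents a serious obstacle: the only point requiring care is the polynomial bound for $\theta_x$, which is the reason the definition of $\ccM$ uses a polynomial weight in the first place.
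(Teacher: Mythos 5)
Your proposal is correct and follows essentially the same route as the paper: both rest on the exact equivariance $Q_\eps(\theta_x\xi)=\theta_x Q_\eps(\xi)$, the polynomial operator-norm bound $\|\theta_x\|_{\ccM\to\ccM}\le p_\kappa(x)$, and the $L^p(\Omega,\P)$ (hence in probability) convergence of $Q_\eps(\xi)$ from \cite{HL15}. The only organizational difference is minor and in your favor: to obtain $\Omega_0$, you extract a subsequence making $Q_{\eps_k}(\xi)$ converge a.s.\ and then let the continuity of each $\theta_x$ carry this to $Q_{\eps_k}(\theta_x\xi)$ pointwise in $x$, whereas the paper extracts the subsequence directly for the $L^\infty_{p_\kappa}(\R^2\to\ccM)$-valued collection $(Q_\eps(\theta_x\xi))_x$; both work, and either way the uniform $p_\kappa(x)$ bound is still needed to conclude the locally uniform convergence in probability at the end.
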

\begin{proof}
	We start with a general bound. Let $f\in \cC^{\beta}_{p_\kappa}$ for some $\beta < 0$. Then for any $x\in \R^d$
	$$ \| \theta_x f \|_{\cC^{\beta}_{p_\kappa}} \le \BLUE{\Big( \sup_{y\in\R^d} \frac{p_\kappa(y-x)}{p_\kappa(y)}  \Big)\;} \| f \|_{\cC^{\beta}_{p_\kappa}} = p_\kappa(x) \| f \|_{\cC^{\beta}_{p_\kappa}}\;.$$
	This inequality is a direct consequence of the definitions of the shift and of the $\cC^{\beta}_{p_\kappa}$-norm. It implies that
	\begin{equation}\label{Eq:BoundShiftedf}
		\sup_{x\in \R^2} \frac1{p_\kappa(x)}\| \theta_x f \|_{\cC^{\beta}_{p_\kappa}} \le \| f \|_{\cC^{\beta}_{p_\kappa}}\;.
	\end{equation}
	It is straightforward to check that for any $\eps\in (0,1)$, any $x\in\R^2$ and any $\xi\in\Omega$
	$$ Q_\eps(\theta_x \xi) = \theta_x Q_\eps(\xi)\;.$$
	This observation combined with\BLUE{~\eqref{Eq:BoundShiftedf}} implies that for all $\xi\in \Omega$ and all $\eps,\eps' \in (0,1)$
	$$\sup_{x\in \R^2} \frac1{p_\kappa(x)} \| Q_\eps(\theta_x \xi) - Q_{\eps'}(\theta_x \xi) \|_{\ccM} \le \| Q_\eps(\xi) - Q_{\eps'}(\xi) \|_{\ccM}\;.$$
	The arguments in \cite[Lemma 1.1, Proposition 1.3]{HL15} show that, for any given $p>1$, $Q_\eps(\xi)$ converges in $L^p(\Omega,\P)$ as $\eps\downarrow 0$ to some limit $Q(\xi) = (\xi,Z)$ in $\ccM$. We thus deduce that the collection of r.v.~$(Q_\eps(\theta_x \xi),x\in\R^2)$, taking values in $L^\infty_{p_\kappa}(\R^2 \to \ccM)$, converges as $\eps\downarrow 0$ in $L^p(\Omega,\P)$. Therefore there exists a deterministic sequence $\eps_k \downarrow 0$ such that the convergence holds almost surely. We thus consider the set of full $\P$-measure
	$$ \Omega_0 := \Big\{\xi\in\Omega: Q_{\eps_k}(\theta_x \xi) \mbox{ converges in $\ccM$ as }k\to\infty\;,\quad \forall x\in \R^2\Big\}\;.$$
	It is elementary to check that $\Omega_0$ is invariant under all shifts $\theta_x$, $x\in\R^2$. Let us denote by $\tilde{Q}(x,\xi) := \lim_k Q_{\eps_k}(\theta_x \xi)$ for all $x\in\R^2$ and all $\xi\in\Omega_0$. For any $\xi\in\Omega_0$ and any $x\in \R^2$
	$$ \tilde{Q}(x,\xi) = \lim_k Q_{\eps_k}(\theta_x \xi) = \lim_k Q_{\eps_k}(\theta_0(\theta_x \xi)) = \tilde{Q}(0,\theta_x \xi)\;,$$
	and, using the continuity of the shift operator on $\ccM$
	$$ \tilde{Q}(x,\xi) = \lim_k Q_{\eps_k}(\theta_x \xi) = \lim_k \theta_x Q_{\eps_k}(\xi) = \theta_x \tilde{Q}(0,\xi)\;.$$
	If we now set $Q(\xi) := \tilde{Q}(0,\xi)$, we have shown that for all $\xi\in\Omega_0$
	$$ Q(\theta_x \xi) = \theta_x Q(\xi)\;.$$
	To conclude the proof, let us assign arbitrary values to $Q(\xi)$ for $\xi\in \Omega\backslash\Omega_0$. We recall that $(Q_\eps(\theta_x \xi),x\in\R^2)$ converges in $L^p(\Omega,\P)$. Necessarily its limit coincides with the a.s.~limit along $\eps_k$, that is $(Q(\theta_x\xi), x\in\R^2)$.
\end{proof}

\subsection{The parabolic evolution}\label{subsec:PAM}

For a given $q=(X, U) \in \ccM$, we set $V := G*X$. One can check that $V$ belongs to $\cC^{1-\kappa}_{p_\kappa}$. Given the \emph{enhanced noise} $q=(X,U)$, we consider the PDE
\begin{equation}\label{eq:PAM_w}
	\begin{cases} \partial_t w = \Delta w - 2\nabla V \cdot \nabla w + (U + F*X)  w \;,\quad t>0\;, x\in \R^2\;,\\
		w(t=0,\cdot) = w_0\;.
	\end{cases}
\end{equation}

\begin{remark}
	If one takes $X=\xi_\eps$ and $U = \vert \nabla V\vert^2 - C_\eps$, then $w$ coincides with $w_\eps$ of \eqref{eq:PAM_weps}.
\end{remark}

\begin{theorem}\label{Th:PAM2d}
	The mild solution to \eqref{eq:PAM_w} yields a map $(q,f,t) \mapsto w^{q, f}(t, \cdot)$ defined on $\ccM\times  \Big( \bigcup_{\ell_0\in\R} L^2_{e_{\ell_0}}\Big) \times \R_+$ such that for any $\ell_0\in\R$:
	\begin{enumerate}
		\item (Continuity in the data) For any $t \ge 0$, the map $(q, f) \mapsto w^{q, f}(t, \cdot)$ is continuous from $\ccM \times L^2_{e_{\ell_0}}$ into $L^2_{e_{\ell_0 + t}}$, and is linear w.r.t.~$f$.
		\item (Continuity in time) For any $(q,f) \in \ccM\times L^2_{e_{\ell_0}}$ and any $T>0$, the map $t \mapsto w^{q, f}(t, \cdot)$ is continuous from $[0,T]$ into $L^2_{e_{\ell_0 + T}}$.
		\item (Semigroup property) For any $(q,f) \in \ccM\times L^2_{e_{\ell_0}}$ and any $0\le s \le t$, $w^{q, f}(t, \cdot)$ coincides with $w^{q,g}(s,\cdot)$ where $g = w^{q,f}(t-s,\cdot)$.
	\end{enumerate}
\end{theorem}

\begin{proof}
	This is essentially an adaptation of \cite{HL15}, except for property 2. Let us present the main arguments. Given a time horizon $T>0$, and some parameters $\ell \in \R$, $\alpha, \beta \ge 0$ we define $\cE_{\ell, T, \alpha, \beta}$ as the completion of all smooth functions $u: (0, T] \times \R^2 \to \R$ under the norm
	\begin{equation}
	\normm{u}_{\ell, T, \alpha, \beta} := \sup_{t \in (0, T]} t^{\beta} \norm{u(t, \cdot)}_{\cH^{\alpha}_{e_{\ell + t}}} < \infty \;.
	\end{equation}
	Set $g = U+F*X$. Since $F$ is smooth, we have $g \in \cC^{-\kappa}_{p_\kappa}$. For simplicity we write $w(t) = w(t, \cdot)$ and we set
	\begin{equation}\label{eq:fixed-point}
		\cM_{T, f} w(t) := K_t*f + \int_0^t K_{t-s}*(-2\nabla V \cdot \nabla w(s) + gw(s)) \dd s,  \quad t \in (0, T]
	\end{equation}
	with $K_t$ denoting the heat kernel. This corresponds to the mild formulation of \eqref{eq:PAM_w}. We are going to show that $\cM_{T, f}$ admits a unique fixed point in $\cE_{\ell_0, T, \alpha, \beta}$ provided $T$ is small enough.\\
	Set $\alpha = 1 + 2\kappa$ and $\beta = \frac{\alpha}{2} = \frac12 + \kappa$. \BLUE{By standard product results on functions and distributions spaces, we have uniformly over all $s \in (0,T]$
		$$ \| \nabla V \cdot \nabla w(s) \|_{\cH^{\alpha-1}_{p_\kappa e_{\ell_0+s}}} \lesssim s^{-\beta} \norm{q}_\ccM \normm{w}_{\ell_0, T, \alpha, \beta}\;,$$
	and
	$$ \| gw(s) \|_{\cH^{-\kappa}_{p_\kappa e_{\ell_0+s}}} \lesssim s^{-\beta} \norm{q}_\ccM \normm{w}_{\ell_0, T, \alpha, \beta}\;.$$
	Consequently by} the Schauder estimate for the heat kernel and the bound \eqref{Eq:TrickWeights}, one has
	\begin{align*}
	\norm{K_t*f}_{\cH^{\alpha}_{e_{\ell_0}}} &\lesssim t^{-\beta}\norm{f}_{L^2_{e_{\ell_0}}}\;,\\
	\norm{K_{t-s}*(-2\nabla V \cdot \nabla w(s) + gw(s))}_{\cH^{\alpha}_{e_{\ell_0+t}}} &\lesssim (t-s)^{-\frac{\alpha+\kappa}{2}-\kappa} s^{-\beta} \norm{q}_\ccM \normm{w}_{\ell_0, T, \alpha, \beta}\;.
	\end{align*}
	Since $\kappa$ can be chosen arbitrarily small, one can make $ \frac{\alpha}{2} + \frac{3\kappa}{2} < 1$ and $\beta <1$. The integral over time in \eqref{eq:fixed-point} therefore converges and is of order $\int_0^t (t-s)^{-\frac{\alpha+\kappa}{2}-\kappa} s^{-\beta} \dd s  \lesssim t^{1 - \frac{\alpha}{2} - \frac32 \kappa} t^{-\beta}$,
	whence
	\begin{equation} \label{eq:bound_w(t)}
	\normm{\cM_{T, f} w}_{\ell_0, T, \alpha, \beta} \lesssim  \norm{f}_{L^2_{e_{\ell_0}}} + T^{1 - \frac{\alpha}{2} - \frac32\kappa} \norm{q}_\ccM \normm{w}_{\ell_0, T, \alpha, \beta}.
	\end{equation}
	Therefore, $\cM_{T, f}$ is a well-defined map on $\cE_{\ell_0, T, \alpha, \beta}$. In the same vein, for any two elements $w, \bar{w} \in \cE_{\ell_0, T, \alpha, \beta}$, the previous calculation shows
	\[\normm{\cM_{T, f}(w - \bar{w})}_{\ell_0, T, \alpha, \beta} \lesssim T^{1 - \frac{\alpha}{2} - \frac32\kappa} \norm{q}_\ccM \normm{w - \bar{w}}_{\ell_0, T, \alpha, \beta}.\]
	Since $1 - \frac{\alpha}{2} - \frac32\kappa > 0$, $\cM_{T, w_0}$ can be made contracting by choosing a $T$ small enough (depending only on $q$), in which case there exists a unique fixed point in $\cE_{\ell_0, T, \alpha, \beta}$ that we denote $w^{q,f}$.  As $T$ depends only on $q$, this solution map can be extended to global in time solution by an iterative argument consisting in restarting the equation from the initial data $w^{q, f}(T)$, $w^{q,f}(2T)$, etc. We have now proved the existence and uniqueness of the solution map $(q,f,t) \mapsto w^{q,f}(t)$.\\

	The semigroup property stated as point 3. follows from the semigroup property of the heat kernel and from the equation satisfied by the unique fixed point of the map defined above. The linearity in $f$ is immediate from \eqref{eq:fixed-point}. Now we prove the continuity of $(q, f) \mapsto w^{q, f}$. Fix any $R>0$ and take two elements $(q_j = (X_j, U_j), f_j)$, $j = 1, 2$ in the ball of radius $R$ of $\ccM \times L^2_{e_{\ell_0}}$. Denote $V_j = G*X_j$ and $g_j = U_j + F*X_j$, one has
	\begin{align*}
	w^{q_1, f_1}(t) &- w^{q_2, f_2}(t) = K_t *(f_1 - f_2) + \int_0^t K_{t-s}*[-2\nabla (V_1 - V_2) \cdot \nabla w^{q_1, f_1}(s)\\
	&- 2\nabla V_2 \cdot \nabla (w^{q_1, f_1} - w^{q_2, f_2}) + (g_1 - g_2)w^{q_1, f_1}(s) + g_2(w^{q_1, f_1} - w^{q_2, f_2})(s)] \dd s.
	\end{align*}
	A calculation similar to the one above shows
	\begin{align*}
	&\normm{w^{q_1, f_1} - w^{q_2, f_2}}_{\ell_0, T, \alpha, \beta} \lesssim \norm{f_1 - f_2}_{L^2_{e_{\ell_0}}} \\
	&+ T^{1 - \frac{\alpha}{2} - \frac32\kappa} \left(\norm{q_1 - q_2}_\ccM \normm{w^{q_1, f_1}}_{\ell_0, T, \alpha, \beta} + R \normm{w^{q_1, f_1} - w^{q_2, f_2}}_{\ell_0, T, \alpha, \beta} \right).
	\end{align*}
	By choosing $T$ small enough (depending only on $R$), we then deduce $(q, f) \mapsto w^{q, f}$ is uniformly continuous on the ball with radius $R$ in $\ccM \times L^2_{e_{\ell_0}}$. Since $R > 0$ is arbitrary, we have thus proved the continuity for fixed $t$ small enough. Again by an iterative argument, this can be extended to any $t > 0$.
	
	We turn to point 2., the strong continuity in time of $w=w^{q,f}$. First we observe that the heat operator $K_t$ is a bounded operator on $L^2_{e_\ell}$ for any given $\ell\in\R$, and that it is strongly continuous in $t$.\\	
	We now prove the right continuity. Fix $t \in [0, T)$. For $\eps>0$ such that $t + \eps\leq T$, we compute\BLUE{
	\begin{align*}
		w(t+\eps) &:= K_\eps\Big(K_t*f + \int_0^t K_{t-s}*(-2\nabla V \cdot \nabla w(s) + gw(s)) \dd s\Big)\\
		&\quad+ \int_t^{t+\eps}  K_{t+\eps-s}*(-2\nabla V \cdot \nabla w(s) + gw(s)) \dd s\\
		&:= K_\eps w(t) + \int_0^{\epsilon} K_{\eps-s}*(-2\nabla V \cdot \nabla w(t+s) + gw(t+s)) \dd s
	\end{align*}
	and therefore
}

	\begin{align*}
	&\norm{w(t+\epsilon) - w(t)}_{L^2_{e_{\ell_0+t+\eps}}} \leq \norm{K_\eps w(t) - w(t)}_{L^2_{e_{\ell_0+t+\eps}}}\\
	&+ \norm{\int_0^{\epsilon} K_{\eps-s}*(-2\nabla V \cdot \nabla w(t+s) + gw(t+s)) \dd s}_{L^2_{e_{\ell_0+t+\eps}}}
	\end{align*}
	The second term is controlled by \[\int_0^\eps (\eps - s)^{-\frac{3}{2}\kappa} (t+s)^{-\beta} \norm{q}_\ccM \normm{w}_{\ell_0, T, \alpha, \beta} \dd s \lesssim \eps^{1- \frac32\kappa}.\]
	The right continuity thus follows from the fact that $1 -\frac32\kappa > 0$ and the strong continuity of the heat kernel.\\
	For the left continuity, fix $t \in (0, T]$ and write $w(t) - w(t-\eps) = I_1 + I_2 + I_3$ where
	\begin{align*}
	&I_1 := (K_t - K_{t-\eps}) f\;,\quad I_2 := \int_0^{t-\eps} (K_{t-s} - K_{t-\eps-s})(-2\nabla V \cdot \nabla w(s) + gw(s)) \dd s\;, \\
	&I_3:= \int_{t-\eps}^t K_{t-s} (-2\nabla V \cdot \nabla w(s) + gw(s)) \dd s\;.
	\end{align*}
	Obviously the $L^2_{e_{\ell_0 + t}}$-norm of $I_1$ goes to $0$ as $\eps \to 0$ by the strong continuity of \BLUE{$K_\eps$} on $L^2_{e_{\ell_0+t}}$. For $I_3$, we have a similar control as previously: \[\norm{I_3}_{L^2_{e_{\ell_0+t}}} \lesssim \int_{t-\eps}^t (t-s)^{-\frac{3}{2} \kappa} s^{-\beta} \norm{q} \normm{w}_{\ell_0, T, \alpha,\beta} \dd s  \lesssim \eps^{1-\frac{3}{2} \kappa}(t-\eps)^{-\beta}.\]
	For $I_2$, we use $K_{t-s} - K_{t-s-\eps} = K_{(t-s)/2-\eps} (K_\eps - I) K_{(t-s)/2}$ and that $\norm{K_{(t-s)/2-\eps}} \leq 1$ as a bounded operator on $L^2_{e_\ell}$ for any given $\ell \in \R$. It gives 
	\begin{align*}
	\norm{I_2}_{L^2_{e_{\ell_0+t}}} \leq \int_0^{t-\eps} h_\eps(s) \dd s
	\end{align*}
	where \[h_\eps(s) = \norm{(K_\eps - I)K_{\frac{t-s}{2}}(-2\nabla V\cdot w(s) + gw(s))}_{L^2_{e_{\ell_0 + t}}}, \quad s\in [0, t].\]
	Note that by the strong continuity of $K_\eps$, one has $h_\eps(s) \to 0$ for all $s \in [0, t]$; moreover, uniformly \BLUE{over} $\eps$ we have $h_\eps(s) \lesssim (t-s)^{-\frac{3}{2}\kappa}s^{-\beta} \norm{q}_{\ccM} \normm{w}_{\ell_0, T, \alpha, \beta}$ which is an $L^1$-function on $[0, t]$. By dominated convergence it follows that $\norm{I_2}_{L^2_{e_{\ell_0+t}}} \leq \int_0^{\BLUE{t-\eps}} h_\eps(s) \dd s \to 0$ as $\eps \to 0$. Since $I_1, I_2$ and $I_3$ all converge to $0$ as $\eps \to 0$, we have thus proved the left-continuity of $w$.
\end{proof}

\subsection{The semigroup}\label{subsec:semigroup2d}

\BLUE{For $q\in\ccM$, recall that $V = G*X$ so that there exists $C>0$ such that $|V(x)| \leq C (1+|x|)^{\kappa}$ for all $x\in\R^2$. Consequently, for any given $\delta >0$, it holds $e^{|V|} \lesssim e^{\delta(1+|x|)}$ and thus $e^{|V|} \in L^{\infty}_{e_{\delta}}$. Based on this observation, and on Theorem \ref{Th:PAM2d} the definition below makes sense.}

\begin{definition}\label{def:symmetric-semigroup}
	Fix $\delta > 0$ and $q \in \ccM$. For any $t > 0$, define the domain $\cD_t = \cD_t(q) := e^{-V} L^2_{e_{-t - \delta}} \subset L^2$ and the operator
		\[P_t : \bigg\{ \begin{array}{lll}
		\cD_t &\to &L^2\\
		f & \mapsto &e^{-V} w^{q,e^{V} f}(t)
	\end{array}\]
	Define also $P_0 = I$ on $L^2$.
\end{definition}

\begin{remark}
	If one sets $u=e^{-V} w^{q,e^V f}$ then $u$ formally solves
	\begin{equation}\label{eq:PAM2d}
		\begin{cases}
			\partial_t u = \Delta u + (U - X - |\nabla V|^2)  u, \quad t>0, ~ x\in \R^2,\\
			u(t=0,\cdot) = f \;.
		\end{cases}
	\end{equation}
	This PDE makes sense when $X, U$ are functions, but is only formal in general as the terms $|\nabla V|^2$, $Xu$ are singular products.
\end{remark}

In the sequel, we regard $P_t$ as an unbounded operator on the Hilbert space $\mathfrak{H} = L^2(\R^2,dx)$ with domain $\cD_t$. The following proposition shows that for any given $T>0$, the collection $(P_t)_{t \in [0, T]}$ satisfies the assumptions of Theorem \ref{Th:Klein-Landau}.
\begin{proposition}
	Fix $T>0$. The collection $(P_t)_{t \in [0, T]}$ is a semigroup of symmetric operators on $\mathfrak{H} = L^2(\R^2,dx)$ and is strongly continuous with respect to $t$. In particular, it satisfies the \emph{(Non-increasing domains with dense union)}, \emph{(Semigroup)}, \emph{(Symmetry)} and \emph{(Weak continuity)} properties of Theorem \ref{Th:Klein-Landau}.
\end{proposition}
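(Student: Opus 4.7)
The plan is to check each of the four properties of Theorem \ref{Th:Klein-Landau} in turn, drawing on Theorem \ref{Th:PAM2d} as the main technical input; the three routine properties are handled by direct bookkeeping, while the main obstacle will be symmetry, which I treat by smooth approximation.

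The first three properties go as follows. For \emph{(Non-increasing domains with dense union)}: the norm on $L^2_{e_{-t-\delta}}$ equals $\int|\cdot|^2 e^{(t+\delta)(1+|x|)}\dd x$, which is increasing in $t$, so $\cD_t = e^{-V}L^2_{e_{-t-\delta}}$ is non-increasing; any compactly supported $g \in L^2$ satisfies $e^V g \in L^2$ with compact support (since $V$ is continuous), hence belongs to $\cD_t$ for all $t > 0$, and such $g$ are dense in $L^2$. For \emph{(Semigroup)} and strong continuity in $t$: given $f \in \cD_t$, setting $h := e^V f \in L^2_{e_{-t-\delta}}$, part 1 of Theorem \ref{Th:PAM2d} places $w^{q,h}(s) \in L^2_{e_{-(t-s)-\delta}}$, so $P_s f \in \cD_{t-s}$ and the composition $P_{t-s}P_s = P_t$ is part 3 of that theorem; continuity in time of $s \mapsto w^{q,h}(s)$ in $L^2_{e_{-\delta}}$ (part 2) together with the pointwise bound $e^{-2V(x)} \leq C e^{\delta(1+|x|)}$ (which holds because $V \in \cC^{1-\kappa}_{p_\kappa}$ has sub-linear growth, as $\kappa < 1$) yields strong continuity of $s \mapsto P_s f$ in $L^2$, which implies \emph{(Weak continuity)}.

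The main obstacle is \emph{(Symmetry)}. Since $\cC^{-1-\kappa}_{p_\kappa}$ and $\cC^{-\kappa}_{p_\kappa}$ are by definition completions of $C_c^\infty(\R^2)$, one can choose $q_n = (X_n,U_n) \in \ccM$ smooth and compactly supported with $q_n \to q$ in $\ccM$. Then $V_n := G*X_n$ is smooth with compact support, and a direct calculation (using $-\Delta G = \delta_0 + F$) shows that $u_n := e^{-V_n} w^{q_n, h}$ solves the classical parabolic equation
\begin{equation*}
\partial_t u_n = \Delta u_n + W_n u_n, \qquad W_n := U_n - X_n - |\nabla V_n|^2,
\end{equation*}
with $W_n$ smooth and compactly supported. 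Hence $-\Delta - W_n$ is a self-adjoint Schr\"odinger operator on $H^2(\R^2)$ whose semigroup is symmetric on $L^2$. Equivalently, the map $S_t^n : h \mapsto w^{q_n,h}(t)$ is symmetric with respect to $e^{-2V_n}\dd x$:
\begin{equation*}
\int_{\R^2} (S_t^n h)\, k \, e^{-2V_n}\dd x = \int_{\R^2} h\,(S_t^n k) \, e^{-2V_n}\dd x,
\end{equation*}
first for $h,k$ compactly supported, then by density for all $h,k \in L^2_{e_{-t-\delta}}$; at a formal level this reflects that $\Delta - 2\nabla V_n\cdot\nabla = e^{2V_n}\,\mathrm{div}(e^{-2V_n}\nabla\,\cdot\,)$ is manifestly symmetric with respect to $e^{-2V_n}\dd x$.

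To pass to the limit $n\to\infty$ with $h = e^V f$ and $k = e^V g$ fixed, I combine: (i) the convergence $S_t^n h \to S_t h$ in $L^2_{e_{-\delta}}$ from continuity of $w$ in $q$ (Theorem \ref{Th:PAM2d}); (ii) the uniform-in-$n$ pointwise bound $e^{-2V_n(x)} \leq C_\mu e^{\mu(1+|x|)}$ for every $\mu > 0$, coming from the uniform $\cC^{1-\kappa}_{p_\kappa}$-boundedness of $(V_n)$; and (iii) pointwise convergence $V_n \to V$ inherited from $\cC^{1-\kappa}_{p_\kappa}$ convergence. A Cauchy--Schwarz estimate with exponential weight $e_{-\delta}$ together with dominated convergence then transfers the weighted symmetry from $S_t^n$ to $S_t$, which is precisely $\langle P_t f,g\rangle_{L^2} = \langle f, P_t g\rangle_{L^2}$ for $f,g \in \cD_t$.
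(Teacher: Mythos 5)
Your proof is correct and follows the same overall structure as the paper's: the \emph{(Non-increasing domains)}, \emph{(Semigroup)} and \emph{(Weak continuity)} properties are obtained by the same direct bookkeeping through Theorem~\ref{Th:PAM2d}, and for \emph{(Symmetry)} both arguments reduce first to smooth, compactly supported $q=(X,U)$ by continuity. The difference lies in how symmetry is established in the smooth case. The paper stays at the level of $w$ and argues directly that the interpolation function $s \mapsto \int w^{q,e^V f}(s,x)\, e^{-2V(x)}\, w^{q,e^V g}(t-s,x)\,\dd x$ is constant (a single integration by parts, using that $\Delta - 2\nabla V\cdot\nabla$ is symmetric for $e^{-2V}\dd x$), so that equality of its values at $s=0$ and $s=t$ gives the claim. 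You instead conjugate to $u_n = e^{-V_n} w^{q_n,\,\cdot}$, identify this flow with the semigroup of the classical Schr\"odinger operator $-\Delta - W_n$ with $W_n$ smooth and compactly supported, and invoke its known self-adjointness; this silently uses the identification of the fixed-point solution with the classical semigroup solution (the same matching the paper carries out for item~2 of Proposition~\ref{prop:continuity_H}), so your route is a little less self-contained but equivalent. One genuine plus of your write-up is that you spell out the limit $n\to\infty$ that the paper leaves implicit: fixing $h=e^V f$, $k=e^V g$ (rather than $e^{V_n}f$, $e^{V_n}g$) and combining the $L^2_{e_{-\delta}}$-convergence of $S^n_t$, the uniform sub-exponential bound on $e^{-2V_n}$, and pointwise convergence $V_n \to V$ is a clean way to make the continuity argument precise.
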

\begin{proof}
	{\emph{Non-increasing domains with dense union}:} For $0 \leq s \leq t \leq T$, we have obviously $\cD_t = e^{-V} L^2_{e_{-t - \delta}} \subset e^{V} L^2_{e_{-s - \delta}} = \cD_s$. Moreover, the set $e^{-V} C^\infty_c$ is contained in $L^2_{e_{-t - \delta}}$, therefore $C^\infty_c \subset \cD_t$ for all $t \in (0, T]$ so that $\cD_t$ is dense in $L^2$.\\
	{\emph{Semigroup}:} Let $f \in \cD_t$. The first property of Theorem \ref{Th:PAM2d} shows that $g:= w^{q,e^V f}(s) \in L^2_{e_{-(t-s) - \delta}}$ and therefore $P_s f = e^{-V} g \in \cD_{t-s}$. In addition the third property of Theorem \ref{Th:PAM2d} shows that
	$$P_t f = e^{-V} w^{q,e^{V} f}(t) = e^{-V} w^{q,g}(t-s)\;,$$
	that is $P_t f = P_{t-s} (e^{-V} g) = P_{t-s}(P_{s} f)$ as required.\\
	{\emph{Weak continuity}:} This is a direct consequence of the second property of Theorem \ref{Th:PAM2d}. Actually it even proves strong continuity.\\
	{\emph{Symmetry}:} By the continuity properties stated in Theorem \ref{Th:PAM2d}, it suffices to consider $q=(X,U)$ where $X,U$ are smooth functions with compact support. The symmetry property can be restated at the level of $w$ in the following way
	$$ \int_{x\in\R^2} w^{q,e^V f}(t,x) e^{-V(x)} g(x) dx = \int_{x\in\R^2} w^{q,e^V g}(t,x) e^{-V(x)} f(x) dx\;.$$
	These two terms are the values at $s=0$ and $s=t$ of the map
	$$h(s) := \int_{x\in\R^2} w^{q,e^V f}(s,x) e^{-2V(x)} w^{q,e^V g}(t-s,x) dx\;.$$
	Since $X,U$ are smooth, $w^{q,e^V f}$ and $w^{q,e^V g}$ are strong solutions of \eqref{eq:PAM_w} and a direct computation shows that $h(s)$ is constant in $s$.
\end{proof}

\begin{definition}\label{def:L}
	For $q \in \ccM$, let $H(q)$ be the unique self-adjoint operator associated, thanks to Theorem \ref{Th:Klein-Landau}, to the symmetric semigroup $(P_t)_{t \in [0, T]}$ of Definition \ref{def:symmetric-semigroup}. In particular, $P_t$ is the restriction of $e^{-tH(q)}$ to $\cD_t$ for all $t \in [0, T]$.
\end{definition}

\begin{remark}
	Since $\cD_t$ is dense in $L^2$, the proof of~\cite[Lemma 6]{KL81} shows that $H(q)$ is essentially self-adjoint over $\hat{\cD}(q) := \bigcup_{0 < s < t} P_s \cD_t$. Furthermore, from the proof of Theorem \ref{Th:PAM2d}, one can deduce that $\hat{\cD}(q) \subset \cH^{1-\kappa}(\R^2,dx)$.
\end{remark}

We now establish a few properties satisfied by $H(q)$. Given any $f\in L^2$, denote by $\mu_f(q)$ the spectral measure associated to the self-adjoint operator $H(q)$ and $f$, that is, the unique finite measure on $\R$ with Stieltjes transform
$$ \int_\R (\lambda-z)^{-1} \mu_f(q)(d\lambda) = \langle f, (H(q)-z)^{-1} f\rangle\;,\quad \forall z\in \C\backslash \R\;.$$

\begin{proposition}\label{prop:continuity_H}
	\begin{enumerate}
			\item For any $f\in L^2$, the map $q \mapsto \mu_f(q)$ is continuous from $\ccM$ into the space of finite non-negative measures endowed with the topology of weak convergence. As a consequence $q\mapsto H(q)$ is continuous in the strong resolvent sense.
			\item Let $q = (X, U) \in \ccM$ be such that $X,U \in L^\infty_{p_{b}}$. Then $H(q)$ coincides with the operator $-\Delta + X + |\nabla  G * X|^2 - U$ which is essentially self-adjoint over $C^\infty_c$.
		\end{enumerate}
\end{proposition}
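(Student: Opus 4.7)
The plan for part 1 is to first establish continuity in $q$ of the Laplace transform $L^q(s) := \int_{\R} e^{-s\lambda} \mu_f(q)(\dd \lambda)$ for $f \in C^\infty_c$ and $s > 0$, then promote this to weak continuity of $q \mapsto \mu_f(q)$, and finally deduce strong resolvent convergence. Since $C^\infty_c \subset \cD_t$ for every $t > 0$, the Klein--Landau construction together with the symmetry of $P^q_s$ gives
\[ L^q(s) = \norm{P^q_{s/2} f}^2 = \crochet{f, P_s^q f} = \int f(x)\, e^{-V(x)}\, w^{q, e^V f}(s, x)\, \dd x. \]
The integrand is supported on the compact set $\supp(f)$, so continuity of $L^q(s)$ in $q$ follows from the local boundedness of $V = G*X$ on compacts combined with the continuity of $q \mapsto w^{q, e^V f}(s, \cdot)$ furnished by Theorem \ref{Th:PAM2d}.

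To upgrade pointwise continuity of Laplace transforms to weak continuity of $q \mapsto \mu_f(q)$, I would fix $q_n \to q_0$ in $\ccM$ and observe that $\sup_n L^{q_n}(s') < \infty$ for every $s' > 0$, which yields uniform tightness at $-\infty$ via the Markov-type bound $\mu_f(q_n)((-\infty, -M]) \leq e^{-s' M} L^{q_n}(s')$. For any vague subsequential limit $\mu^*$ extracted by Helly's theorem, a truncation argument --- splitting $\int e^{-s\lambda}\,\dd\mu_f(q_n)$ at $|\lambda| = R$, bounding the $\lambda > R$ tail by $e^{-sR} \norm{f}^2$ and the $\lambda < -R$ tail via $L^{q_n}(s+\eps)$ --- shows $\int e^{-s\lambda}\,\dd\mu^* = L^{q_0}(s)$ for all $s > 0$. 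Uniqueness of the Laplace transform of finite measures then forces $\mu^* = \mu_f(q_0)$; in particular $\mu^*(\R) = \norm{f}^2$, so no mass escapes at $+\infty$ and the vague convergence is actually weak. Density extends the statement to all $f \in L^2$ since $f \mapsto \mu_f(q)$ is Lipschitz in total variation uniformly in $q$. Strong resolvent convergence then follows by testing weak convergence against $(\lambda - z)^{-1} \in C_0(\R)$ for $z \in \C \setminus \R$ (yielding weak convergence of $(H(q_n) - z)^{-1} f$) together with norm convergence obtained from the test function $|\lambda - z|^{-2} \in C_0(\R)$.

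For part 2, if $X, U \in L^\infty_{p_b}$ are genuine functions, then $V = G*X$ is continuous with $\nabla V$ also continuous and $|\nabla V|^2$ pointwise bounded by $C p_{2b}$. The formal operator $A := -\Delta + X + |\nabla V|^2 - U$ on $C^\infty_c$ has a locally bounded potential satisfying $X + |\nabla V|^2 - U \geq -C(1+|x|)^{2b}$ with $2b < \kappa < 2$, so the Faris--Lavine criterion \cite{FL74} applies and $A$ is essentially self-adjoint on $C^\infty_c$. To identify $\overline{A} = H(q)$, I would note that for $f \in C^\infty_c$ the function $w^{q, e^V f}$ produced by Theorem \ref{Th:PAM2d} is a classical solution of \eqref{eq:PAM_w}, whence $P_t^q f = e^{-V} w^{q, e^V f}(t)$ is a classical solution of \eqref{eq:PAM2d}, i.e., $\partial_t u = -Au$. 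Differentiating this identity at $t = 0$ in $L^2$ shows $C^\infty_c \subset \cD(H(q))$ and $H(q) f = A f$ on $C^\infty_c$; essential self-adjointness of $A|_{C^\infty_c}$ then forces $H(q) = \overline{A}$.

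The main obstacle is the tightness argument at $+\infty$ in part 1: the Klein--Landau framework only furnishes the one-sided Laplace transform (for $s > 0$), which directly controls the $-\infty$ tail of $\mu_f(q)$ but not the $+\infty$ tail. Tightness at $+\infty$ must be recovered indirectly, by identifying any vague subsequential limit with $\mu_f(q_0)$ via uniqueness of the Laplace transform and then invoking conservation of the total mass $\norm{f}^2$.
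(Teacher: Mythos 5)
Your proposal is correct in spirit but takes a genuinely different route from the paper in both parts. For part~1, to upgrade convergence of Laplace transforms to weak convergence of spectral measures, the paper proves a self-contained lemma (Lemma~\ref{Lemma:Laplace}) that views the transforms as holomorphic functions on a strip and invokes Montel's theorem; you instead argue by tightness at $-\infty$, vague compactness (Helly), a truncation argument to identify any subsequential limit via uniqueness of the Laplace transform, and conservation of mass to prevent escape at $+\infty$. Both are sound, and your tightness bookkeeping is a nice elementary alternative. You also derive strong resolvent convergence directly from ``weak $+$ norm $=$ strong'' (testing against $(\lambda-z)^{-1}$ and $|\lambda-z|^{-2}$, both in $C_0(\R)$), whereas the paper passes through weak resolvent convergence and then cites a standard lemma from Teschl; this requires a polarization step to get genuine weak convergence of the resolvent vectors (not just the diagonal matrix element), but that is routine. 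Your extension from $f\in C^\infty_c$ to $f\in L^2$ via the total-variation bound $\|\mu_f(q)-\mu_g(q)\|_{\mathrm{TV}}\le \|f-g\|(\|f\|+\|g\|)$ (uniform in $q$) is also a valid alternative to the paper's route via resolvent boundedness.

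For part~2 the two arguments run in opposite directions, and this is where yours is under-justified. The paper starts from the known essentially self-adjoint $T=-\Delta+W$, takes $f\in\cD(e^{-tT})$, defines $u(s)=e^{-sT}f$, and shows by functional calculus and elliptic regularity that $u$ solves the PDE for $s\in(0,t)$; uniqueness in the weighted $L^2$ space then identifies $u$ with the PAM solution, and equality of the two Laplace transforms of the spectral measures yields $E^T=E^H$. You instead start from the Klein--Landau generator and attempt to differentiate $t\mapsto e^{-tH(q)}f=u^{q,f}(t)$ at $t=0^+$ in $L^2$ for $f\in C^\infty_c$, claiming $w^{q,e^Vf}$ is a ``classical solution''. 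With $X,U$ merely in $L^\infty_{p_b}$, the coefficients in~\eqref{eq:PAM_w} ($\nabla V$ Hölder, $U+F*X$ only locally bounded) give strong rather than classical solutions, and the existence of the one-sided $L^2$-derivative at $t=0$ (hence membership of $f$ in $\cD(H(q))$, via Fatou on the spectral integral) requires a boundary-layer estimate near $t=0$ — e.g.\ via the mild formulation, showing $\frac{1}{t}\int_0^t K_{t-s}(Wu^{q,f}(s))\,\dd s\to Wf$ in $L^2$. This is plausible but not immediate, and the paper's reversed argument avoids the boundary-layer issue entirely by working on $s\in(0,t)$ and invoking uniqueness of the PDE.
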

\begin{remark}\label{Rk:Heps}
	The point 2 covers the case when $X = \xi_\eps$ and $U = |\nabla G * \xi_\eps|^2 - C_\eps$.
\end{remark}
\begin{proof}
	We start with the first item. Fix $f\in C_c$. Then $f\in \cD_t(q)$ for any $q\in \ccM$ and any $t\ge 0$. In addition, we have
	\[\int_\R e^{-s \lambda} \mu_f(q)(\dd \lambda) = \crochet{f, e^{-sH(q)} f} = \crochet{f, e^{-V} w^{q, e^V f}(s)}, \quad \forall s \in [0, t] \;.\]
	Suppose $(q_n)$ is a sequence in $\ccM$ converging to $q$. Since $f$ has compact support, it is easy to check that $e^{V(q_n)} f$ converges to $e^{V(q)}f$ in $L^2_{e_{\ell_0}}$ for any $\ell_0\in\R$. Then the first property of Theorem \ref{Th:PAM2d} ensures that $w^{q_n, e^{V(q_n)} f}(s)$ converges to $w^{q, e^{V(q)} f}(s)$ in $L^2_{e_{\ell_0+t}}$ for any $\ell_0\in\R$. Using again the fact that $f$ has compact support, we deduce that
	$$ \crochet{f, e^{-V(q_n)} w^{q_n, e^{V(q_n)} f}(s)} \to \crochet{f, e^{-V(q)} w^{q, e^{V(q)} f}(s)}\;,\quad n\to\infty\;.$$
	Therefore the Laplace transform of the measure $\mu_f(q_n)$ converges to the Laplace transform of $\mu_f(q)$. By Lemma \ref{Lemma:Laplace} below this implies weak convergence of the measures. In turn, it implies that their Stieltjes transforms converge: for any $z\in\C\backslash\R$
	$$ \int_\R (\lambda-z)^{-1} \mu_f(q_n)(\dd \lambda) \to \int_\R (\lambda-z)^{-1} \mu_f(q)(\dd \lambda)\;,\quad n\to\infty\;,$$
	that is $\crochet{f, (H(q_n)-z)^{-1} f} \to \crochet{f, (H(q)-z)^{-1} f}$. Since the resolvents at $z$ are uniformly bounded operators and since $C_c$ is dense in $L^2$, this last convergence remains true for all $f\in L^2$. In other words, we have shown weak resolvent convergence of $H(q_n)$ to $H(q)$. By~\cite[Lemma 6.37]{Tes14}, this implies strong resolvent convergence, as required.\\
	To conclude the proof of the first item, we show that $\mu_f(q_n)$ converges weakly to $\mu_f(q)$ for any given $f\in L^2$ (so far, we only proved it for $f\in C_c$). The weak resolvent convergence of $H(q_n)$ to $H(q)$ ensures that for any $f\in L^2$, the Stieltjes transform of $\mu_f(q_n)$ converges to the Stieltjes transform of $\mu_f(q)$. Since these measures have a finite mass (equal to the $L^2$ norm of $f$), this implies weak convergence.
	
	For the second item, we write $W = X + |\nabla G * X|^2 -U$ which is a locally bounded potential satisfying the bound $W(x) \ge - C p_\kappa(x)$ for some constant $C>0$. Since $\kappa<2$, the operator $T = -\Delta + W$ is essentially self-adjoint over $C^\infty_c$ \cite{Kat72}. Let us still denote by $T$ the self-adjoint extension. \BLUE{For any $L>0$, we set $W_L := W \mathbf{1}_{[-L/2,L/2]^d}$, and we let $T_L$ be the self-adjoint extension of $-\Delta + W_L$. An argument of Weidmann, see~\cite[Thm 1]{Wei97}, shows that $T_L$ converges to $T$ as $L\to\infty$ in the strong resolvent sense. We also set $q_L := (X_L,U_L)$ with $X_L := X \mathbf{1}_{[-L/2,L/2]^d}$ and $U_L := U \mathbf{1}_{[-L/2,L/2]^d}$. It is straightforward to check that $q_L$ converges to $q$ in $\ccM$ as $L\to\infty$. The first item proven above thus implies that $H(q_L)$ converges to $H(q)$ in the strong resolvent sense. To prove that $T=H(q)$, it suffices to prove that $T_L = H(q_L)$ for any $L>0$. To that end, we observe that $T_L$ is bounded from below, consequently $e^{-s T_L}$ is bounded from above for any $s \ge 0$. Fix $f \in L^2$ and set $u(s) = e^{-sT_L} f$ for $s \ge 0$, so that $u$ can be regarded as a function from $\R_+$ to $L^2$. It can be checked that $u$ solves \eqref{eq:PAM2d} with $q_L = (X_L, U_L)$. Then by the uniqueness stated in Theorem \ref{Th:PAM2d} in a weighted $L^2$ space, $u$ coincides with $u^{q_L,f} := e^{-G*X_L} w^{q_L,f}$. In turn, this implies that $u^{q_L,f}(s)$ lies in $L^2$ at any time, and therefore the operator $e^{-s H(q_L)}$ is necessarily a bounded operator on $L^2$. We have thus proven that $e^{-s T_L} = e^{-s H(q_L)}$, which suffices to deduce that $T_L = H(q_L)$.
	}
\end{proof}

\begin{lemma}\label{Lemma:Laplace}
	Let $\nu_n, \nu$ be finite measures on $\R$. Assume that there exists $T>0$ such that for all $t\in [0,T]$
	$$ \int_\R e^{-\lambda t} \nu_n(d\lambda) \to \int_\R e^{-\lambda t} \nu(d\lambda)\;,\quad n\to\infty\;,$$
	(implicitly we assume that all these quantities are finite). Then $\nu_n$ converges weakly to $\nu$ as $n\to\infty$.
\end{lemma}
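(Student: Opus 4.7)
The plan is to prove weak convergence via Helly's selection theorem combined with uniqueness of Laplace transforms: I would extract a vaguely convergent subsequence of $(\nu_n)$, show that any such subsequential limit has the same Laplace transform as $\nu$ on the interval $(0,T)$, hence coincides with $\nu$, and finally upgrade vague convergence to weak convergence using conservation of total mass.

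Setting $t=0$ in the hypothesis gives $\nu_n(\R)=\phi_n(0)\to\phi(0)=\nu(\R)<\infty$, so the total masses are uniformly bounded. Helly's selection theorem then provides a subsequence (still written $\nu_n$) converging vaguely, against $C_c(\R)$, to some finite positive measure $\nu^{\ast}$ on $\R$. To identify $\nu^{\ast}$, I fix $t\in(0,T)$ and a continuous cutoff $\psi_A\colon\R\to[0,1]$ equal to $1$ on $[-A,A]$ and supported in $[-A-1,A+1]$, and decompose
\[
\phi_n(t) \;=\; \int_{\R} e^{-\lambda t}\psi_A(\lambda)\,\nu_n(\dd\lambda) \;+\; \int_{\R} e^{-\lambda t}(1-\psi_A(\lambda))\,\nu_n(\dd\lambda).
\]
The first term converges as $n\to\infty$ to $\int e^{-\lambda t}\psi_A\,\nu^{\ast}(\dd\lambda)$ by vague convergence. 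The remainder is controlled uniformly in $n$ by the two tail bounds $\int_{\lambda\ge A}e^{-\lambda t}\,\nu_n(\dd\lambda)\le e^{-At}\phi_n(0)$ and $\int_{\lambda\le -A}e^{-\lambda t}\,\nu_n(\dd\lambda)\le e^{-A(T-t)}\phi_n(T)$, the latter obtained by factoring $e^{-\lambda t}=e^{-\lambda T}e^{\lambda(T-t)}$ with $e^{\lambda(T-t)}\le e^{-A(T-t)}$ on $\{\lambda\le -A\}$. Both vanish as $A\to\infty$ uniformly in $n$ since $\phi_n(0)$ and $\phi_n(T)$ are bounded sequences. Passing first $n\to\infty$ with $A$ fixed and then $A\to\infty$ (using monotone convergence for $\nu^{\ast}$, whose Laplace transform at $t$ is finite by Fatou) yields $\int e^{-\lambda t}\,\nu^{\ast}(\dd\lambda)=\phi(t)$ for every $t\in(0,T)$.

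Two finite positive measures on $\R$ whose Laplace transforms agree on a nonempty open interval must coincide: both transforms extend to bounded analytic functions on the strip $\{0<\Re z<T\}$ (absolute convergence following from convexity of $\phi$ on $[0,T]$), so they agree on the entire strip by analytic continuation, and their continuous boundary values on the imaginary axis identify the characteristic functions of the two measures. Therefore $\nu^{\ast}=\nu$. Since every subsequence of $(\nu_n)$ admits a further vaguely convergent subsequence with limit $\nu$, the whole sequence converges vaguely to $\nu$. Combined with the total mass convergence $\nu_n(\R)\to\nu(\R)$, this upgrades to weak convergence through the standard tightness bound $\nu_n(\R\setminus[-A,A])\le\nu_n(\R)-\int\psi_A\,\nu_n(\dd\lambda)$, which becomes uniformly small for large $A$ after passing to the limit against $\nu$. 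The delicate step of the plan is the uniform tail bound at $-\infty$: this is not implied by the boundedness of total masses alone and is precisely where the hypothesis at the endpoint $t=T$ enters crucially.
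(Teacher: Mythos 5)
Your proof is correct, and it takes a genuinely different route from the paper's. The paper works at the level of functions: it considers the holomorphic extensions $g_n(z) := \int_\R e^{\lambda z}\,\nu_n(d\lambda)$ on the strip $\{-T<\Re z<0\}$, observes (implicitly using convexity of $t\mapsto\int e^{-\lambda t}\nu_n(d\lambda)$ and the boundedness of the endpoint sequences at $t=0$ and $t=T$) that the $g_n$ are uniformly bounded there, invokes Montel's theorem for compactness, identifies the subsequential limit with $g$ by analytic continuation, and concludes by applying L\'evy's continuity theorem to the exponentially reweighted measures $e^{-\lambda T/2}\nu_n(d\lambda)$. You instead work at the level of measures: Helly's selection theorem provides the compactness, a truncation argument with the cutoff $\psi_A$ identifies the Laplace transform of any vague limit with $\phi$ on $(0,T)$, and uniqueness of Laplace transforms on an open interval (via analytic continuation to the strip and Fourier inversion on a vertical line) pins down the limit as $\nu$. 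Both proofs ultimately rest on the same two ingredients --- a compactness theorem and analytic continuation --- and both crucially need the endpoint bounds at $t=0$ and $t=T$; your approach makes these endpoint bounds explicit through the tail estimates $e^{-At}\phi_n(0)$ and $e^{-A(T-t)}\phi_n(T)$, which is precisely the input that the paper's uniform boundedness (needed for Montel) packages implicitly. Your version is arguably more elementary, avoiding Montel, and it surfaces clearly where the hypothesis at $t=T$ is used (controlling the tail at $-\infty$), which the paper's proof leaves hidden; the paper's version is shorter because Montel plus L\'evy continuity absorb the truncation bookkeeping.
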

\begin{proof}
	For any $z\in O_T := \{z\in \bbC: -T< \Re(z) < 0\}$, we set $g_n(z) := \int_\R e^{\lambda z} \nu_n(d\lambda)$ and $g(z) := \int_\R e^{\lambda z} \nu(d\lambda)$. By assumption $(g_n)_{n\ge 1}$ is a sequence of holomorphic functions on $O_T$ which is uniformly bounded for the supremum norm. By Montel's Theorem, we deduce that there exists a subsequence $g_{n_k}$ that converges uniformly on compact sets of $O_T$ to some holomorphic function $h$. By uniqueness of analytic continuation, $h$ must coincide with $g$ on $O_T$ and this suffices to deduce that the whole sequence $g_n$ converges to $g$ uniformly on compact sets of $O_T$. If we set $\mu_n(d\lambda) := e^{-\lambda T/2} \nu_n(d\lambda)$ and $\mu(d\lambda) := e^{-\lambda T/2} \nu_n(d\lambda)$, the previous argument impl\BLUE{ies that the characteristic function of $\mu_n$ converges to the characteristic function of $\mu$, so} that $\mu_n$ converges weakly to $\mu$. In turn, this implies that $\nu_n$ converges vaguely to $\nu$. Since the total mass of $\nu_n$ converges to the total mass of $\nu$, this suffices to conclude.
\end{proof}

We have all the ingredients at hand to define the Anderson Hamiltonian with white noise potential on $\R^2$. For any $\xi\in\Omega$, recall that $Q_\eps(\xi), Q(\xi) \in \ccM$ and set
	\[\cH_\eps(\xi) := H(Q_\eps(\xi))\;,\quad \cH(\xi) = H(Q(\xi))\;,\]
where $H$ is the deterministic map introduced in Definition \ref{def:L}.
\begin{proof}[Proof of Theorem \ref{Th:Construction} in dimension $2$]
	By Remark \ref{Rk:Heps}, for every $\xi\in\Omega$ we have $\cH_\eps(\xi) = -\Delta+\xi_\eps + C_\eps$. By Lemma \ref{lem:Omega02d}, $Q$ is the limit in probability of $Q_\eps$ so that Proposition \ref{prop:continuity_H} entails that $\cH_\eps$ converges in the strong resolvent sense to $\cH$ in probability. Finally, Definition \ref{def:L} ensures that, for any $t\ge 0$, the domain of the operator $e^{-t\cH}$ contains the set $\cD_t$ so in particular all functions $f\in L^2(\R^d,dx)$ with compact support, and $e^{-t\cH} f$ coincides with the solution of \eqref{Eq:PAM} at time $t$, starting from $f$ at time $0$.
\end{proof}

\subsection{Characterization of the support}\label{subsec:supports2d}

We conclude this section by proving a crucial result in order to identify the spectrum of $\cH$. Let $\supp(Q_\eps)$, resp.~$\supp(Q)$, be the topological support of the law of the r.v.~$Q_\eps$, resp.~$Q$, that is, the intersection of all closed sets of $\ccM$ with full measure.

\begin{theorem}\label{Th:Support2d}
	\BLUE{Fix $b\in (0,\kappa/2)$. }For any $\eps \in (0,1)$, $\supp(Q_\eps) \subset \supp(Q)$. Furthermore
	$$ \supp(Q) = \overline{\{(h,\vert \nabla G*h \vert^2 + c): h\in L^\infty_{p_b}(\R^2)\;,\; c\in\R\}}^{\ccM}\;.$$
\end{theorem}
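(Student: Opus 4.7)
The inclusion $\supp(Q_\eps) \subset \supp(Q)$ will come for free from the characterization, since almost surely $Q_\eps(\xi) = (\xi_\eps, |\nabla G*\xi_\eps|^2 - C_\eps)$ is of the form $(h, |\nabla G * h|^2 + c)$ with $h = \xi_\eps \in L^\infty_{p_b}$ a.s.\ (by standard estimates on smooth stationary Gaussian fields) and $c = -C_\eps$. Hence $\supp(Q_\eps)$ lies in the closure of the asserted set, which will be $\supp(Q)$ by the characterization. The inclusion $\supp(Q) \subset \overline{\{(h, |\nabla G*h|^2 + c)\}}^{\ccM}$ is immediate from Lemma~\ref{lem:Omega02d}: along the sequence $\eps_k \downarrow 0$, $Q_{\eps_k}(\xi) \to Q(\xi)$ almost surely, and each $Q_{\eps_k}(\xi)$ already belongs to that set.

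For the reverse inclusion I fix $(h_0, c_0) \in L^\infty_{p_b} \times \R$ and aim to show that every open neighborhood of $(h_0, |\nabla G * h_0|^2 + c_0)$ in $\ccM$ has positive probability under the law of $Q$. The first move is to reduce to $h_0 \in C^\infty_c(\R^2)$ by density, using that $h \mapsto (h, |\nabla G * h|^2)$ is continuous in an appropriate topology because $\nabla G *$ is smoothing. Since then $h_0 \in L^2$, Cameron-Martin applies: the laws of $\xi$ and $\xi + h_0$ are mutually absolutely continuous, hence so are the pushforward laws of $Q(\xi)$ and $Q(\xi + h_0)$. Passing to the limit from the smooth approximations $Q_\eps(\xi + h_0)$ yields
\[ Q(\xi + h_0) = \bigl(\xi + h_0,\, Z(\xi) + 2(\nabla G * \xi) \cdot (\nabla G * h_0) + |\nabla G * h_0|^2\bigr), \]
where the product is well-defined since $\nabla G * h_0$ is smooth. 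This formula defines a homeomorphism $S_{h_0}$ of $\ccM$ under which $\supp(Q)$ is invariant; since $S_{h_0}^{-1}(h_0, |\nabla G * h_0|^2 + c_0) = (0, c_0)$, the problem reduces to showing $(0, c) \in \supp(Q)$ for every $c \in \R$.

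To produce $(0, c) \in \supp(Q)$ I would proceed in two stages. \emph{Stage 1 (large negative $c$).} Split $\xi = \xi_L + \xi_H$ into independent Fourier components below and above a cutoff $N$ and expand
\[ Z(\xi) = \bigl(|\nabla G * \xi_L|^2 - C_L\bigr) + 2(\nabla G * \xi_L) \cdot (\nabla G * \xi_H) + Z_H, \]
with $C_L = \E|\nabla G * \xi_L(0)|^2$ and $Z_H$ the renormalized Wick square of $\xi_H$. Choose $N$ so that $C_L(N) \approx -c$. On the event $\{\|\xi_L\|_{\cC^{-1-\kappa}_{p_\kappa}} + \|\xi_H\|_{\cC^{-1-\kappa}_{p_\kappa}} + \|Z_H\|_{\cC^{-\kappa}_{p_\kappa}} < \eta\}$ one checks that $\|\xi\| < 2\eta$ and $\|Z - c\| \le \eta + O(\eta^2) + |C_L + c|$. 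By independence of $\xi_L$ and the pair $(\xi_H, Z_H)$, this event has positive probability, yielding $(0, c) \in \supp(Q)$; since $C_L(N) \to \infty$ as $N \to \infty$, this produces $(0, c)$ for arbitrarily negative $c$. \emph{Stage 2 (arbitrary $c$).} Pick $c_1 < c$ with $(0, c_1) \in \supp(Q)$ from Stage 1 and construct oscillatory Cameron-Martin directions $h_k(x) = k\sqrt{2(c - c_1)}\,\sin(k e_1 \cdot x)\, \chi_{R_k}(x)$ with $R_k \uparrow \infty$. A direct Fourier-side computation shows $h_k \to 0$ in $\cC^{-1-\kappa}_{p_\kappa}$ and $|\nabla G * h_k|^2 \to c - c_1$ (as a constant) in $\cC^{-\kappa}_{p_\kappa}$. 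The invariance from the previous step places $(h_k, c_1 + |\nabla G * h_k|^2) \in \supp(Q)$, and passing to the limit gives $(0, c) \in \supp(Q)$ by closedness.

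The main obstacle lies in the joint small-ball estimate for $(\xi_H, Z_H)$ used in Stage~1. Since $Z_H$ is a non-trivial second-chaos functional of $\xi_H$, smallness of $\xi_H$ in the weak norm $\cC^{-1-\kappa}_{p_\kappa}$ does not automatically control $Z_H$ in $\cC^{-\kappa}_{p_\kappa}$; establishing positivity of the joint probability will require a dedicated analysis of the second Wiener chaos exploiting the explicit structure of the kernel $\nabla G \otimes \nabla G$, combined with small-ball techniques across dyadic frequency scales in the weighted Besov topology.
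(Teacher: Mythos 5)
Your overall architecture matches the paper's: the easy inclusions are identical, your $S_{h_0}$ is exactly the shift operator $T_h$ of display \eqref{eq:shift_on_Q}, your Cameron--Martin argument reproduces Lemma~\ref{Lemma:ShifthQ}, and the heart of the proof is indeed reduced to showing $(0,c)\in\supp(Q)$ for every $c\in\R$. The divergence is in how you propose to establish that last fact, and that is where the gap sits.

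Your Stage~1 hinges on the joint small-ball estimate
\[
\P\bigl(\|\xi_H\|_{\cC^{-1-\kappa}_{p_\kappa}} + \|Z_H\|_{\cC^{-\kappa}_{p_\kappa}}<\eta\bigr)>0\;,
\]
which, as you yourself flag, is the genuine obstacle. It is not merely unproved; it is essentially a rescaled copy of the original statement to be proved. Conditioning a Gaussian field to be small in a negative-regularity norm gives no direct control of its renormalized Wick square, because the 0th and 2nd chaos components respond to the conditioning in incompatible ways: the centering constant $C_H$ is deterministic, while the fluctuating part of $Z_H$ is genuinely quadratic in the independent high modes. Sending the cutoff $N\to\infty$ so that $C_L(N)\approx -c$ only shifts the problem to $\xi_H$, which then carries almost all of the modes, so Stage~1 has not made progress on the hard part. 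Stage~2 alone cannot rescue this: applying a deterministic oscillatory shift $h_k$ to $Q(\xi)$ gives first component $\xi+h_k$, which cannot converge to $0$ because $h_k$ is deterministic; Stage~2 can only move an already-known $(0,c_1)$ to $(0,c)$ with $c>c_1$, which is exactly what you intend, but it presupposes Stage~1.

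The paper resolves precisely this difficulty with the \emph{random} Cameron--Martin direction of Lemma~\ref{Lemma:Resonance}: $h_\delta(\xi) := -\xi_\delta + a_\delta\,\xi_{\lambda_\delta}$. The $-\xi_\delta$ term annihilates the noise in the first component (since $\xi-\xi_\delta\to 0$), while the small-amplitude, very-high-frequency resonance term $a_\delta\,\xi_{\lambda_\delta}$ (with $\lambda_\delta=\delta^{\delta^{-1}}$ and $a_\delta$ pinned down by a fixed-point argument) contributes, through $a_\delta^2\,|\nabla G*\xi_{\lambda_\delta}|^2$, the exact constant $c$ in the second component. Because $h_\delta(\xi)\in L^\infty_{p_b}$ a.s.\ for each $\delta$, Lemma~\ref{Lemma:ShifthQ} applies pathwise, and then a second-moment/Wiener-chaos estimate shows $T_{h_\delta(\xi)}Q(\xi)\to(0,c)$ \emph{in probability}, which places $(0,c)$ in the closed set $\supp(Q)$. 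This is not a small-ball argument: it replaces ``positive probability of a small event'' by ``convergence in probability of a shifted model,'' and thereby sidesteps the conditioning problem that Stage~1 runs into. To complete your proof you would have to produce an argument at least as strong as Lemma~\ref{Lemma:Resonance}, and the most natural way to do so is to shift by $-\xi_\delta$, i.e.\ by a $\xi$-dependent direction, rather than by splitting $\xi$ into frequency bands.

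Two smaller remarks: your reduction to $h_0\in C^\infty_c$ ``by density'' needs the closure to be taken in the weaker topology in which $h\mapsto T_h$ is continuous (the paper uses $L^\infty_{p_{\kappa/2}}$, not a strong $L^\infty_{p_b}$ topology, and approximates by compactly supported but not necessarily smooth truncations); and the formula for $Q(\xi+h_0)$ is proved in the paper by passing to the limit along the a.s.-converging sequence $\eps_k$ and checking $\xi+h_0\in\Omega_0$, which requires a short argument rather than being automatic.
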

\begin{remark}
	To identify the spectrum of $\cH$, we will only need the inclusion $\supp(Q_\eps) \subset \supp(Q)$. However, our proof of this inclusion relies on the identification of $\supp(Q)$.
\end{remark}

The rest of this subsection is devoted to the proof of Theorem \ref{Th:Support2d}: it relies on three lemmas. The first two lemmas are inspired by the work of Chouk and Friz~\cite{CF16}, while the third lemma is inspired by the work of Hairer and Sch\"onbauer~\cite{HS21}.\\

We first introduce, for any function $h \in L^{\infty}_{p_{\kappa/2}}(\R^2)$, the shift operator $T_h$ on $\ccM$ as follows:
\begin{equation} \label{eq:shift_on_Q}
	T_h q = T_h (X,U) = (X+h, U + 2 (\nabla G*X) \cdot (\nabla G*h) + \vert \nabla G*h\vert^2)\;.
\end{equation}
One can check that $(h,q) \mapsto T_h q$ is continuous from $L^{\infty}_{p_{\kappa/2}}(\R^2)\times \ccM$ into $\ccM$ and that $T_h^{-1} = T_{-h}$.

\begin{lemma}
	For all $\xi \in \Omega_0$ and all $h\in L^\infty_{p_{\kappa/2}}$, it holds $\xi+h \in \Omega_0$ and
	$$ T_h Q(\xi) = Q(\xi+h)\;.$$
\end{lemma}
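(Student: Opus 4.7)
The starting point is the algebraic identity
\[Q_\eps(\xi + h) = T_{h_\eps} Q_\eps(\xi), \qquad \forall \eps \in (0,1),\]
where $h_\eps := h * \varrho_\eps$. This identity follows immediately from the definitions of $Q_\eps$ and of the shift in \eqref{eq:shift_on_Q} after expanding
\[|\nabla G * (\xi_\eps + h_\eps)|^2 = |\nabla G * \xi_\eps|^2 + 2(\nabla G * \xi_\eps) \cdot (\nabla G * h_\eps) + |\nabla G * h_\eps|^2.\]
The whole plan is then to take $\eps = \eps_k \downarrow 0$ in this identity: the left-hand side will be forced to converge to what one calls $Q(\xi+h)$, which we will identify as $T_h Q(\xi)$.

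To pass to the limit on the right-hand side, I would use the decomposition
\[T_{h_{\eps_k}} Q_{\eps_k}(\xi) - T_h Q(\xi) = \bigl(T_{h_{\eps_k}} Q_{\eps_k}(\xi) - T_{h_{\eps_k}} Q(\xi)\bigr) + \bigl(T_{h_{\eps_k}} Q(\xi) - T_h Q(\xi)\bigr).\]
The first bracket vanishes in $\ccM$ because $\|h_{\eps_k}\|_{L^\infty_{p_{\kappa/2}}} \le \|h\|_{L^\infty_{p_{\kappa/2}}}$ uniformly in $k$, $Q_{\eps_k}(\xi) \to Q(\xi)$ in $\ccM$ by $\xi \in \Omega_0$, and an explicit computation from \eqref{eq:shift_on_Q} shows that the Lipschitz constant of $q \mapsto T_H q$ stays bounded as $H$ varies in a bounded set of $L^\infty_{p_{\kappa/2}}$. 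For the second bracket, a direct subtraction shows it equals
\[\Bigl(h_{\eps_k} - h,\; 2(\nabla G * \xi) \cdot (\nabla G * (h_{\eps_k} - h)) + |\nabla G * h_{\eps_k}|^2 - |\nabla G * h|^2\Bigr).\]
Its first coordinate vanishes in $\cC^{-1-\kappa}_{p_\kappa}$ by the standard mollification estimate $\|h_\eps - h\|_{\cC^{-\alpha}_{p_\kappa}} \lesssim \eps^{\alpha} \|h\|_{L^\infty_{p_{\kappa/2}}}$ valid for any $\alpha > 0$ and localised through \eqref{Eq:Calphaw}. Its second coordinate requires that $\nabla G * h$ enjoys positive Hölder regularity with weight $p_{\kappa/2}$ so that $\nabla G * h_{\eps_k} = (\nabla G * h) * \varrho_{\eps_k}$ converges to $\nabla G * h$ in a weighted Hölder class; this follows from the standard Schauder estimate for the convolution with $G$ applied to a function in $L^\infty_{p_{\kappa/2}} \subset \cC^{0}_{p_{\kappa/2}}$. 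Once the Hölder convergence is secured, the multiplication with the distribution $\nabla G * \xi$ (which lies in $\cC^{-\kappa}_{p_\kappa}$) is continuous and produces the desired vanishing in $\cC^{-\kappa}_{p_\kappa}$.

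To promote this to the full membership $\xi + h \in \Omega_0$, I would run the argument above with $(\xi, h)$ replaced by $(\theta_x \xi, \theta_x h)$ for each $x \in \R^2$: the shift-invariance of $\Omega_0$ proven in Lemma \ref{lem:Omega02d} gives $\theta_x \xi \in \Omega_0$, while $\|\theta_x h\|_{L^\infty_{p_{\kappa/2}}} \le p_{\kappa/2}(x) \|h\|_{L^\infty_{p_{\kappa/2}}}$ by the same weight manipulation as in that proof. This yields convergence of $Q_{\eps_k}(\theta_x(\xi+h))$ for every $x$, hence $\xi+h \in \Omega_0$, and the $x=0$ case identifies $Q(\xi+h) = T_h Q(\xi)$. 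The main technical obstacle is the regularity analysis of $\nabla G * h$ on weighted $L^\infty$ spaces: one must keep enough control on the weights so that the Hölder regularity of $\nabla G * h$ and the convergence of $\nabla G * h_{\eps_k}$ towards it are strong enough to allow the multiplications in the second bracket to be continuous with values in $\cC^{-\kappa}_{p_\kappa}$.
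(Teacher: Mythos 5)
Your proof takes the same skeleton as the paper's: start from the algebraic identity $Q_\eps(\xi+h) = T_{h_\eps} Q_\eps(\xi)$, pass to the limit along $\eps_k$, and then upgrade the convergence of $Q_{\eps_k}(\xi+h)$ to membership of $\xi+h$ in $\Omega_0$. The differences are in how the limit is taken and in the $\Omega_0$ step.

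For the limit, the paper simply invokes the previously stated joint continuity of $(h,q) \mapsto T_h q$ on $L^\infty_{p_{\kappa/2}}\times\ccM$ together with the claim that $h_\eps \to h$ in $L^\infty_{p_{\kappa/2}}$, and deduces at once that $T_{h_{\eps_k}} Q_{\eps_k}(\xi)\to T_h Q(\xi)$. Your two-term decomposition is more explicit and arguably more robust: the first bracket only needs $\|h_{\eps_k}\|_{L^\infty_{p_{\kappa/2}}}$ to remain bounded together with a Lipschitz-in-$q$ estimate for $T_H$ that is uniform over bounded $H$, while the second bracket only needs $h_{\eps_k}-h$ to vanish in a negative H\"older norm and $\nabla G * h_{\eps_k}\to \nabla G*h$ in a positive weighted H\"older norm. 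These weaker requirements are genuinely satisfied, whereas the paper's assertion that $h_\eps\to h$ in $L^\infty_{p_{\kappa/2}}$ is, strictly speaking, not true for a general element of $L^\infty_{p_{\kappa/2}}$ (mollification of a merely bounded function need not converge uniformly). In that sense your argument fills in a point that the paper passes over quickly. The one caveat, which you correctly flag as the technical obstacle, is the bookkeeping of weights: the product $(\nabla G*X)\cdot(\nabla G*h)$ lives with weight $p_\kappa\cdot p_{\kappa/2}$, so either one works with a target weight slightly larger than $p_\kappa$ or one keeps the weight on $h$ small enough (as the paper does with $b<\kappa/2$ in the next lemma); this is consistent with the paper's own conventions.

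For the $\Omega_0$-membership step, the paper takes a more economical route than yours: once $Q_{\eps_k}(\xi+h)$ is known to converge, the commutation $Q_{\eps_k}(\theta_x(\xi+h)) = \theta_x Q_{\eps_k}(\xi+h)$ and the continuity of $\theta_x$ on $\ccM$ immediately give the convergence of $Q_{\eps_k}(\theta_x(\xi+h))$ for every $x$. Your variant of re-running the full analysis with $(\theta_x\xi,\theta_x h)$ is correct but does more work than needed.
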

\begin{proof}
	For any $\eps\in (0,1)$, any $\xi\in\Omega$ and all $h\in L^\infty_{p_{\BLUE{\kappa}/2}}$  it holds
	$$ T_{h_{\eps}} Q_\eps(\xi) = Q_\eps(\xi+h)\;$$
	where $h_\eps = h * \varrho_\eps$ converges to $h$ in $L^\infty_{p_{\kappa/2}}$ as $\eps \to 0$. We now restrict ourselves to $\xi \in \Omega_0$. We know that $Q_{\eps_k}(\xi)$ converges to $Q(\xi)$. The continuity of the shift operator thus implies that $T_{h_{\eps_k}} Q_{\eps_k}(\xi)$ converges to $T_h Q(\xi)$. We thus deduce that $Q_{\eps_k}(\xi+h)$ converges. To conclude the proof, it suffices to show that $\xi+h \in \Omega_0$ so that the limit of $Q_{\eps_k}(\xi+h)$ is necessarily equal to $Q(\xi+h)$ and therefore $T_h Q(\xi) = Q(\xi+h)$.\\
	To check that $\xi+h \in \Omega_0$, by the definition of $\Omega_0$ it suffices to show that for all $x\in \R^2$, $Q_{\eps_k}(\theta_x(\xi+h))$ converges. The continuity of $\theta_x$ in $\ccM$ combined with the convergence of $Q_{\eps_k}(\xi+h)$ and the identity $Q_{\eps_k}(\theta_x(\xi+h))=\theta_x Q_{\eps_k}(\xi+h)$ allows to conclude.
\end{proof}
Recall that $b\in (0,\kappa/2)$. 
\begin{lemma}\label{Lemma:ShifthQ}
	For any $q \in \supp(Q)$ and any $h\in L^\infty_{p_{b}}$, $T_h q \in \supp(Q)$.
\end{lemma}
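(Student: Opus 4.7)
The plan is to first establish the inclusion for compactly supported bounded $h$ via a Cameron--Martin argument, and then to obtain the general case by approximation, leveraging the joint continuity of $(h,q)\mapsto T_hq$ and the fact that $\supp(Q)$ is closed.

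\textbf{Step 1: Cameron--Martin shift for compactly supported $h$.} Suppose $h$ is bounded and compactly supported, so that in particular $h\in L^2(\R^2)\cap L^\infty_{p_{\kappa/2}}$. The classical Cameron--Martin theorem for white noise ensures that the law $\P_h$ of $\xi+h$ under $\P$ is mutually absolutely continuous with $\P$ on $\Omega$. By the preceding lemma, applied pointwise, the identity $Q(\xi+h)=T_hQ(\xi)$ holds on a set of full $\P$-measure (hence full $\P_h$-measure by equivalence). Pushing forward yields
\[ (T_h)_*\bigl(Q_*\P\bigr) \;=\; Q_*\P_h, \]
and the right-hand side is equivalent to $Q_*\P$ since $\P_h\sim\P$. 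Equivalent Borel measures share the same topological support, so
\[ \supp\bigl((T_h)_*(Q_*\P)\bigr)\;=\;\supp(Q). \]
As $T_h$ is a homeomorphism of $\ccM$ (its inverse being $T_{-h}$), the left-hand side is $T_h(\supp(Q))$. Thus $T_h(\supp(Q))=\supp(Q)$, and in particular $T_hq\in\supp(Q)$ for every $q\in\supp(Q)$.

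\textbf{Step 2: Approximation for general $h\in L^\infty_{p_b}$.} Let $h_n(x):=h(x)\mathbf{1}_{|x|\le n}$. Each $h_n$ is bounded and compactly supported, so Step~1 applies and gives $T_{h_n}q\in\supp(Q)$ for every $n$. Since $b<\kappa/2$, one has
\[ \|h-h_n\|_{L^\infty_{p_{\kappa/2}}}\;\le\;\|h\|_{L^\infty_{p_b}}\;\sup_{|x|>n}\frac{p_b(x)}{p_{\kappa/2}(x)}\;\lesssim\; n^{\,b-\kappa/2}\;\xrightarrow[n\to\infty]{}0. \]
The joint continuity of the shift map then implies $T_{h_n}q\to T_hq$ in $\ccM$. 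Because $\supp(Q)$ is closed, we conclude $T_hq\in\supp(Q)$.

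\textbf{Main subtlety.} The delicate point is Step~1: one must know that the renormalised nonlinear functional $Q$ behaves covariantly under the Cameron--Martin shift, i.e.\ that $Q(\xi+h)=T_hQ(\xi)$ almost surely. This is exactly the content of the preceding lemma, which was proved by pushing the elementary smooth-level identity $T_{h_\eps}Q_\eps(\xi)=Q_\eps(\xi+h)$ through the limit $\eps_k\downarrow 0$. Everything downstream (equivalence of push-forward laws, invariance of supports) is then standard, and the approximation step is routine given the strict inequality $b<\kappa/2$ in the definition of $\Omega_0$.
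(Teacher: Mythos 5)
Your proof is correct and follows essentially the same approach as the paper: reduce to a Cameron--Martin shift (either $L^2\cap L^\infty_{p_b}$ in the paper, or bounded compactly supported in your version) and then extend to general $h\in L^\infty_{p_b}$ by truncation, closedness of the support, and joint continuity of $T$. The only stylistic difference is in Step 1: the paper argues directly with a chosen open set $V\ni T_hq$ and shows $\P(Q\in V)>0$, whereas you invoke the general facts that equivalent Borel measures have the same support and that a homeomorphism carries supports to supports, which yields the slightly stronger intermediate conclusion $T_h(\supp(Q))=\supp(Q)$; both formulations rest on the same covariance identity $Q(\xi+h)=T_hQ(\xi)$ from the preceding lemma.
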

\begin{proof}
	Suppose that the property holds true under the restrictive assumption that $h\in L^2\cap L^\infty_{p_{b}}$. Fix $h\in L^\infty_{p_{b}}$. There exists $h_n \in L^2\cap L^\infty_{p_{b}}$ such that $h_n \to h$ in $L^\infty_{p_{\kappa/2}}$ (for instance, take $h_n$ to be the restriction of $h$ to $[-n,n]^2$). The continuity of the shift operator then implies that $T_{h_n} q$ converges to $T_h q$. Since $\supp(Q)$ is a closed set, we conclude that $T_h q \in \supp(Q)$.\\
	Let us now prove that the property holds for $h\in L^2\cap L^\infty_{p_{b}}$. Assume that $q \in \supp(Q)$. For any open set $U\subset\ccM$ containing $q$, it holds $\P(Q\in U) > 0$. Fix some open set $V$ containing $T_h q$. By continuity, the set $U := T_h^{-1} V = T_{-h} V$ is open and contains $q$. Therefore $\P(Q\in V) = \P(Q \in T_h U) = \P(T_{-h} Q \in U)$. By the previous lemma, $T_{-h} Q(\xi) = Q(\xi-h)$ for all $\xi \in \Omega_0$. The Cameron-Martin Theorem ensures that the law of $\xi\mapsto Q(\xi-h)$ is equivalent to the law of $Q(\xi)$. Since $\P(Q\in U) > 0$, we deduce that $\P(T_{-h} Q\in U) > 0$ and consequently $\P(Q\in V) > 0$. We have therefore proven that any open set $V$ containing $T_h q$ has positive measure under the law of $Q$: necessarily $T_h q \in \supp(Q)$.
\end{proof}

\begin{lemma}\label{Lemma:Resonance}
	Fix $c\in \R$. There exist two functions $\delta\mapsto \lambda_\delta \in (0,1)$ and $\delta \mapsto a_\delta \in \R$ such that if for any $\xi\in\Omega$ we set
	$$ h_\delta(\xi) := -\xi_{\delta} + a_\delta \xi_{\lambda_\delta}\;,$$
	then the $\ccM$-valued random variable $T_{h_\delta} Q(\xi)$ converges in probability to $(0,c)$.
\end{lemma}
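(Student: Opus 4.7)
The plan is to substitute $h_\delta=-\xi_\delta+a_\delta\xi_{\lambda_\delta}$ into the explicit formula \eqref{eq:shift_on_Q} for $T_{h_\delta}Q(\xi)$ and analyse the two coordinates separately. The first coordinate equals $\xi-\xi_\delta+a_\delta\xi_{\lambda_\delta}$ and tends to $0$ in probability in $\cC^{-1-\kappa}_{p_\kappa}$ as soon as $\delta\to 0$ and $a_\delta\to 0$, since $\xi_\delta\to\xi$ in this norm while $\|\xi_{\lambda_\delta}\|_{\cC^{-1-\kappa}_{p_\kappa}}$ stays bounded in probability. The heart of the proof lies in the second coordinate, which after expansion reads
\[
S_\delta := Z - 2\nabla Y\cdot\nabla Y_\delta + 2a_\delta\nabla Y\cdot\nabla Y_{\lambda_\delta} + |\nabla Y_\delta|^2 - 2a_\delta\nabla Y_\delta\cdot\nabla Y_{\lambda_\delta} + a_\delta^2|\nabla Y_{\lambda_\delta}|^2,
\]
where $\nabla Y_\eps:=\nabla G*\xi_\eps$ and $\nabla Y:=\nabla G*\xi$.

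Each bilinear product in $S_\delta$ decomposes into its Wick-ordered counterpart plus a deterministic covariance $C_{\eps_1,\eps_2}:=\int\nabla(G*\varrho_{\eps_1})(y)\cdot\nabla(G*\varrho_{\eps_2})(y)\,dy$, with $C_{0,\eps}$ defined by letting $\eps_1\downarrow 0$ and $C_\eps=C_{\eps,\eps}$. Using the linearity of Wick ordering in each argument together with the identity ${:\!|A+B|^2\!:}={:\!|A|^2\!:}+2{:\!A\cdot B\!:}+{:\!|B|^2\!:}$ valid for jointly Gaussian fields, the fluctuating part reassembles into the single Wick square
\[W_\delta := {:\!|\nabla G*\eta_\delta|^2\!:}, \qquad \eta_\delta := \xi - \xi_\delta + a_\delta\xi_{\lambda_\delta},\]
while the scalar part is
\[\alpha_\delta := -2C_{0,\delta} + 2a_\delta C_{0,\lambda_\delta} + C_\delta - 2a_\delta C_{\delta,\lambda_\delta} + a_\delta^2 C_{\lambda_\delta}.\]
It thus suffices to show that $W_\delta\to 0$ in probability in $\cC^{-\kappa}_{p_\kappa}$ and to tune $(a_\delta,\lambda_\delta)$ so that $\alpha_\delta\to c$.

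The convergence $W_\delta\to 0$ is a standard second-chaos computation: Isserlis' formula yields $\E[\langle W_\delta,\varphi^\lambda_x\rangle^2]\lesssim\int\!\!\int\varphi^\lambda_x(y)\varphi^\lambda_x(z)K_\delta(y-z)^2\,dy\,dz$, where the stationary covariance of $\nabla G*\eta_\delta$ has Fourier symbol $\hat K_\delta(k)=|k|^2|\hat G(k)|^2|1-\hat\varrho(\delta k)+a_\delta\hat\varrho(\lambda_\delta k)|^2$, tending to $0$ pointwise as $\delta,a_\delta\downarrow 0$. Combined with Gaussian hypercontractivity inside the fixed second chaos and the weighted-space bound \eqref{Eq:Calphaw}, dominated convergence gives $\E[\|W_\delta\|_{\cC^{-\kappa}_{p_\kappa}}^n]\to 0$ for every $n\geq 1$. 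For $\alpha_\delta$, the short-distance logarithmic behaviour of $G$ in dimension~$2$ yields the asymptotics $C_\eps=\frac{1}{2\pi}\log(1/\eps)+O(1)$, $C_{0,\eps}=\frac{1}{2\pi}\log(1/\eps)+O(1)$ and, when $\lambda_\delta<\delta$, $C_{\delta,\lambda_\delta}=\frac{1}{2\pi}\log(1/\delta)+O(1)$, whence
\[\alpha_\delta = \frac{1}{2\pi}\Big(-\log(1/\delta) + 2a_\delta\log(\delta/\lambda_\delta) + a_\delta^2\log(1/\lambda_\delta)\Big) + O(1).\]
With two free parameters this can be driven to any prescribed $c\in\R$: taking for instance $a_\delta=1/\log(1/\delta)$ (so $a_\delta\to 0$) and solving the resulting equation for $\log(1/\lambda_\delta)$ produces an admissible sequence $\lambda_\delta\in(0,1)$ with $\lambda_\delta\to 0$. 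The main obstacle is the joint compatibility of the two steps: the tuning forces $\lambda_\delta\ll\delta$, so the second-chaos bound on $W_\delta$ must remain valid in this regime, which it does because the $a_\delta^2$ prefactor coming from the low-frequency part of $\hat K_\delta$ absorbs any polylogarithmic loss generated by such extreme scalings.
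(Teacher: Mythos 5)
Your argument follows essentially the same route as the paper's: expand the second coordinate of $T_{h_\delta}Q(\xi)$, split it into a second-Wiener-chaos fluctuation plus a deterministic constant $\alpha_\delta$, show the former vanishes, and tune $(a_\delta,\lambda_\delta)$ so that $\alpha_\delta\to c$. Re-packaging the fluctuating part as the single Wick square $W_\delta={:\!|\nabla G*\eta_\delta|^2\!:}$ is a clean way to organize what the paper writes as the expanded kernel $I_\delta$, and your alternative parametrization (fix $a_\delta=1/\log(1/\delta)$, solve for $\lambda_\delta$) is a legitimate substitute for the paper's ($\lambda_\delta=\delta^{1/\delta}$ fixed, $a_\delta$ produced by a Banach fixed point); your expression for $\alpha_\delta$ matches the paper's expectation term.

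The real gap is the step $W_\delta\to 0$. Pointwise convergence of the Fourier symbol $\hat K_\delta(k)\to 0$ combined with ``dominated convergence'' cannot deliver the conclusion: the $\cC^{-\kappa}_{p_\kappa}$-norm forces a bound $\E[\langle W_\delta,\eta^\mu_x\rangle^2]\lesssim o_\delta(1)\,\mu^{-\kappa}$ that must be \emph{uniform} over $\mu\in(0,1]$, and the would-be dominating symbol $|k|^2|\hat G(k)|^2\simeq|k|^{-2}$ is not integrable in $d=2$ — precisely because $\eta_\delta$ contains the un-mollified noise $\xi$, so $W_\delta$ has a log-singular covariance at the origin for every $\delta>0$. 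The paper obtains the missing uniform-in-$\mu$ bound via the kernel estimates of~\cite[Sec.~10]{Hai14}, which give $\iint H^{(0)}_\delta(x-x')\eta^\mu(x)\eta^\mu(x')\,dx\,dx'\lesssim\delta^{\kappa'}\mu^{-5\kappa'}$. Your diagnosis of the difficulty is also off: the delicate regime $\mu\lesssim\delta$ already shows up in the $a_\delta$-independent part ${:\!|\nabla G*(\xi-\xi_\delta)|^2\!:}$, whose covariance behaves like $\log(\delta/r)$ for $r<\delta$, and what controls the supremum over small $\mu$ is this explicit kernel decay (giving $\log^2(\delta/\mu)\lesssim\delta^{\kappa'}\mu^{-\kappa}$), not the $a_\delta^2$ prefactor. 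Lastly, the existence of an admissible $\lambda_\delta\in(0,1)$, $\lambda_\delta<\delta$, solving $\alpha_\delta=c$ is asserted but not shown; since the $O(1)$ remainders depend implicitly on $\lambda_\delta$, this needs a short fixed-point or intermediate-value argument of the kind the paper sets up for $a_\delta$.
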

\begin{proof}
	Observe that, given any two functions $\delta\mapsto \lambda_\delta \in (0,1)$ and $\delta \mapsto a_\delta \in \R$, $h_\delta(\xi)$ belongs to $L^\infty_{p_b}$ so that $T_{h_\delta} Q(\xi)$ is well-defined. \BLUE{For any $\xi \in \Omega$, we write $Q(\xi) = (\xi, Z)$. Note that $T_{h_\delta} Q(\xi) = (\xi + h_\delta , Z + Y_\delta)$ where
	\begin{align*}
			Y_\delta= &|\nabla G * \xi_\delta|^2 - 2 a_\delta (\nabla G * \xi_\delta) \cdot (\nabla G * \xi_{\lambda_\delta}) + a_\delta^2 |\nabla G * \xi_{\lambda_\delta}|^2 \\&- 2 (\nabla G * \xi_\delta) \cdot (\nabla G * \xi) + 2a_\delta (\nabla G * \xi_{\lambda_\delta}) \cdot (\nabla G * \xi).
	\end{align*}
	We set $\lambda_\delta = \delta^{\delta^{-1}}$. Assume that there exists a function $\delta\mapsto a_\delta$ which is such that $a_\delta \to 0$ as $\delta \downarrow 0$ and
	\begin{equation}\label{Eq:Cdelta}
		C_\delta := \sup_{x\in\R^2} \sup_{\mu \in (0,1]} \sup_{\eta \in \cB^r} \frac{\E[\langle Z+Y_\delta-c,\eta^\mu_x\rangle^2]}{\mu^{-\kappa}}\to 0\;,\quad \delta \downarrow 0\;.
	\end{equation}
	Then we deduce that $\xi + h_\delta$ converges to $0$ in $\cC^{-\kappa}_{p_\kappa}$, and following a similar computation as in~\cite[Proof of Proposition 1.3]{HL15} one can obtain for $m$ large enough
	$$ \E\norm{Z+Y_\delta - c}_{\cC^{-\kappa}_{p_\kappa}}^{2m} \lesssim \sum_{x \in \Z^2} p_\kappa(x)^{-2m}\sum_{n \ge 0} 2^{2n} 2^{-nm\kappa} C_\delta^m \lesssim C_\delta^m\;,$$
	so the desired convergence of $Z+Y_\delta$ to $c$ follows.\\
	Let us now prove that there exists a function $\delta\mapsto a_\delta$ that goes to $0$ and such that~\eqref{Eq:Cdelta} holds. Since $Z+Y_\delta$ is a stationary field, it suffices to consider $x=0$ in~\eqref{Eq:Cdelta}. In fact, the Wiener chaos decomposition yields two terms
	\begin{equation}
		\langle Z+Y_\delta-c,\eta^\mu\rangle = E^{(0)}_{\delta, c} + E^{(2)}_{\delta, \mu}
	\end{equation}
	where the zeroth-order homogeneous Wiener chaos term $E^{(0)}_{\delta, c}$ is nothing but the expectation of $\langle Z+Y_\delta-c,\eta^\mu\rangle$, and $E^{(2)}_{\delta, \mu}$ denotes the remaining term belonging to the second-order homogeneous Wiener chaos. \eqref{Eq:Cdelta} will follow if we show:
	\begin{enumerate}
		\item provided that $a_\delta \to 0$ as $\delta \to 0$, one has
		\[\sup_{\mu \in (0,1]} \sup_{\eta \in \cB^r} \mu^\kappa\E[(E^{(2)}_{\delta, \mu})^2]\to 0, \quad \text{as } \delta \to 0; \]
		\item there exists a function $a_\delta$ and some $\delta_0 > 0$ such that $a_\delta \to 0$ as $\delta \downarrow 0$ and that $E^{(0)}_{\delta,c} = 0$ for all $\delta \in (0,\delta_0)$.
	\end{enumerate} 
	
	Let us first control the second-order component. In order to represent the stochastic integrals living in the Wiener chaos, let us introduce some graphical notations.
	Let the edges \tikz[baseline=-0.1cm] \draw[G] (0,0) to (1,0);
	and \tikz[baseline=-0.1cm] \draw[G] (0,0) to node[below, pos=0.5]{\tiny $\delta$} (1,0);
	represent the kernel $\nabla G(y-x)$ and $\nabla G_\delta(y-x)$, respectively, where in the argument $x, y$ denote the coordinates of the start and the end point of the edge.
	Similarly, the edge \tikz[baseline=-0.1cm] \draw[Gdiff] (0,0) to node[below, pos=0.5]{\tiny $\delta$} (1,0);
	represents the kernel $\nabla (G-G_\delta)(y-x)$.
	The edge \tikz[baseline=-0.1cm] \draw[testfcn] (0,0) to (1,0);
	represents the test function.
	Additionally, let the node \tikz[baseline=-0.1cm] \draw (0, 0) to node[root]{} (0,0);
	denote the origin $(0,0)$.
	The node \tikz[baseline=-0.1cm] \draw (0, 0) to node[xi]{} (0,0);
	denotes a variable to be integrated against the spatial white noise $\xi$, while \tikz[baseline=-0.1cm] \draw (0, 0) to node[dot]{} (0,0);
	denotes a node to be integrated out. As an example, we have the following identity $\iint \nabla G(x - y) \eta(x) dx \xi(dy) =
	\begin{tikzpicture}
		\node at (-1, 0) [xi] (left) {};
		\node at (0, 0)	[dot] (center) {};
		\node at (1, 0) [root] (right) {};
		\draw[G] (left) to (center);
		\draw[testfcn] (right) to (center);
	\end{tikzpicture}.$
	
	With these notations at hand, the second-order Wiener chaos component of $\langle Z+Y_\delta-c,\eta^\mu\rangle$ can be represented by
	\begin{equation}\label{eq:2nd-chaos}
		E^{(2)}_{\delta, \mu} = 
		\begin{tikzpicture}[scale=0.35,baseline=0.2cm]
			\node at (0,-1)  [root] (root) {};
			\node at (0,1)  [dot] (root2) {};
			\node at (-1.5,2.5)  [xi] (left) {};
			\node at (1.5,2.5)  [xi] (right) {};
			
			\draw[testfcn] (root) to  (root2);
			
			\draw[G] (left) to (root2);
			\draw[Gdiff] (right) to node[below, pos=0.4] {\tiny $\delta$} (root2);
		\end{tikzpicture}
		-
		\begin{tikzpicture}[scale=0.35,baseline=0.2cm]
			\node at (0,-1)  [root] (root) {};
			\node at (0,1)  [dot] (root2) {};
			\node at (-1.5,2.5)  [xi] (left) {};
			\node at (1.5,2.5)  [xi] (right) {};
			
			\draw[testfcn] (root) to  (root2);
			
			\draw[G] (left) to node[below, pos=0.4] {\tiny $\delta$} (root2);
			\draw[Gdiff] (right) to node[below, pos=0.4] {\tiny $\delta$} (root2);
		\end{tikzpicture}
		+2a_\delta
		\begin{tikzpicture}[scale=0.35,baseline=0.2cm]
			\node at (0,-1)  [root] (root) {};
			\node at (0,1)  [dot] (root2) {};
			\node at (-1.5,2.5)  [xi] (left) {};
			\node at (1.5,2.5)  [xi] (right) {};
			
			\draw[testfcn] (root) to  (root2);
			
			\draw[G] (left) to node[below, pos=0.4] {\tiny $\lambda$} (root2);
			\draw[Gdiff] (right) to node[below, pos=0.4] {\tiny $\delta$} (root2);
		\end{tikzpicture}
		+a_\delta^2
		\begin{tikzpicture}[scale=0.35,baseline=0.2cm]
			\node at (0,-1)  [root] (root) {};
			\node at (0,1)  [dot] (root2) {};
			\node at (-1.5,2.5)  [xi] (left) {};
			\node at (1.5,2.5)  [xi] (right) {};
			
			\draw[testfcn] (root) to  (root2);
			
			\draw[G] (left) to node[below, pos=0.4] {\tiny $\lambda$} (root2);
			\draw[G] (right) to node[below, pos=0.4] {\tiny $\lambda$} (root2);
		\end{tikzpicture}\;.
	\end{equation}
	In order to prove that the second moment of $E^{(2)}_{\delta, \mu}$ satisfies the desired property, it suffices to show that the second moment of each term in \eqref{eq:2nd-chaos} can be controlled by a quantity of order $A(\delta)\mu^{-\kappa}$ for some $A(\delta) \to 0$, uniformly over all $\mu$ and all test functions $\eta$. For this purpose, we shall use a well-known result concerning the generalized convolutions \cite[Theorem A.3]{HQ18} as well as the bounds
	\[|\nabla G(x)| + |\nabla G_\delta(x)| \lesssim |x|^{-1}, \quad |\nabla (G - G_\delta)(x)| \lesssim \delta^{\kappa'} |x|^{-1-\kappa'},\]
	that hold uniformly over $x \in \R^2$, $\delta \in (0, 1]$ and $\kappa' \in (0, 1)$. Indeed, for the first term in \eqref{eq:2nd-chaos}, one can check that
	\begin{equation}
		\E\left(
		\begin{tikzpicture}[scale=0.35,baseline=0.4cm]
			\node at (0,-1)  [root] (root) {};
			\node at (0,1)  [dot] (root2) {};
			\node at (-1.5,2.5)  [xi] (left) {};
			\node at (1.5,2.5)  [xi] (right) {};
			
			\draw[testfcn] (root) to  (root2);
			
			\draw[G] (left) to (root2);
			\draw[Gdiff] (right) to node[below, pos=0.4] {\tiny $\delta$} (root2);
		\end{tikzpicture}\right)^2
		=
		\begin{tikzpicture}[scale=0.35,baseline=0.4cm]
			\node at (0,-1)  [root] (root) {};
			\node at (0,3)	[dot]	(top)  {};
			\node at (0,1)  [dot] (bottom) {};
			\node at (-2,2)  [dot] (left) {};
			\node at (2,2)  [dot] (right) {};
			
			\draw[testfcn] (root) to  (left);
			\draw[testfcn] (root) to  (right);
			
			\draw[G] (top) to (left);
			\draw[G] (top) to (right);
			\draw[Gdiff] (bottom) to node[below, pos=0.4] {\tiny $\delta$} (left);
			\draw[Gdiff] (bottom) to node[below, pos=0.4] {\tiny $\delta$} (right);
		\end{tikzpicture}
		+
		\begin{tikzpicture}[scale=0.35,baseline=0.4cm]
			\node at (0,-1)  [root] (root) {};
			\node at (0,3)	[dot]	(top)  {};
			\node at (0,1)  [dot] (bottom) {};
			\node at (-2,2)  [dot] (left) {};
			\node at (2,2)  [dot] (right) {};
			
			\draw[testfcn] (root) to  (left);
			\draw[testfcn] (root) to  (right);
			
			\draw[G] (top) to (left);
			\draw[G] (bottom) to (right);
			\draw[Gdiff] (bottom) to node[below, pos=0.4] {\tiny $\delta$} (left);
			\draw[Gdiff] (top) to node[above, pos=0.4] {\tiny $\delta$} (right);
		\end{tikzpicture}
		\lesssim \delta^{2\kappa'} \mu^{-2\kappa'}
	\end{equation}
	by the procedure outlined in \cite{HP15}. Similarly, the second moment of the remaining three terms can be bounded by a quantity of order $\delta^{2\kappa'}\mu^{-2\kappa'}$, $a_\delta^2 \delta^{\kappa'} \mu^{-\kappa'}$ and $a_\delta^4 \mu^{-\kappa'}$, respectively. By choosing $2\kappa' < \kappa$, we thus conclude that the property 1. holds for $E^{(2)}_{\delta,\mu}$.}
	 
	\BLUE{Let us now turn to the property 2., i.e., prove that there exists a function $a_\delta$, that goes to $0$ as $\delta \downarrow 0$, and some $\delta_0 >0$ such that for all $\delta \in (0,\delta_0)$, the expectation of $\langle Z+Y_\delta-c,\eta^\mu\rangle$ vanishes. Let us first note that, by the renormalisation procedure in~\cite{HL15}, $Z$ lives in the homogeneous second order Wiener chaos so that its expectation vanishes. As a consequence, we only need to consider the expectation of $\langle Y_\delta-c,\eta^\mu\rangle$. Furthermore, $Y_\delta$ is function valued and stationary. We will thus show that there exists a function $a_\delta$, that goes to $0$ as $\delta \downarrow 0$, and some $\delta_0 >0$ such that for all $\delta \in (0,\delta_0)$, $\E[Y_\delta(0)] = c$.}\\
	For all constants $\delta_1, \delta_2 \in (0, 1)$, we introduce the notation
	\[v(\delta_1, \delta_2) := \E\left[(\nabla G * \xi_{\delta_1})(0) \cdot (\nabla G * \xi_{\delta_2})(0)\right] = \int_{\R^2} \nabla G_{\delta_1}(x) \cdot \nabla G_{\delta_2}(x) \dd x\;,\]
	and when either $\delta_1$ or $\delta_2$ equals $0$, we replace $\nabla G_{\delta_j}$ simply by $\nabla G$ in the expression. It can be checked that there exist $0 < c_1 < c_2$ such that for all $\delta_1,\delta_2$ as above
	\[ c_1 \log[(\delta_1 \vee \delta_2)^{-1}] \le v(\delta_1, \delta_2) \le c_2 \log[(\delta_1 \vee \delta_2)^{-1}] \;.\]
	The expectation of \BLUE{$Y_\delta(0)-c$ then equals
	$$v(\delta, \delta) - 2a_\delta v(\delta, \lambda_\delta) + a_\delta^2 v(\lambda_\delta, \lambda_\delta) - 2v(\delta, 0) + 2 a_\delta v(\lambda, 0) - c\;.$$}
	Recall that $\lambda_\delta = \delta^{\delta^{-1}}$. The proof is complete if we can show there exist two parameters $\delta_0\in (0,1)$ and $R_0 > 0$, as well as a function $a_\delta$ satisfying $|a_\delta| \leq R_0 \delta$ and
	\[c = v(\delta, \delta) - 2a_\delta v(\delta, \lambda_\delta) + a_\delta^2 v(\lambda_\delta, \lambda_\delta) - 2v(\delta, 0) + 2 a_\delta v(\lambda_\delta, 0)\;,\quad \forall \delta \in(0,\delta_0)\;.\]
	We proceed through a fixed point argument. For $\delta_0 \in (0, 1)$, define $F_{\delta_0}$ as the metric space of all continuous functions $f:(0,\delta_0) \to\R$ such that
	\[ \norm{f}_{\delta_0} = \sup_{\delta \in (0, \delta_0)} \frac{|f(\delta)|}{\delta} < \infty\;.\]
	For $R>0$, let $F_{\delta_0,R}$ be the subset of all $f$ such that $\norm{f}_{\delta_0} \le R$. For any $f\in F_{\delta_0}$ define
	\[Mf(\delta) := \frac{1}{2v(\lambda_\delta, 0)} \left(c - v(\delta, \delta) + 2f(\delta) v(\delta, \lambda_\delta) - f(\delta)^2 v(\lambda_\delta, \lambda_\delta) + 2v(\delta, 0)\right)\]
	Since $\log(\lambda^{-1}_\delta) = \delta^{-1} \log(\delta^{-1})$, one can check that, for all $\delta_0 \in (0,1)$, $M$ maps $F_{\delta_0}$ into itself. Furthermore, there exists $R_0>0$ such that for all $\delta_0$ small enough, $M$ maps $F_{\delta_0,R_0}$ into itself.\\
	Moreover, there exists a constant $C>0$ such that for any two elements $f, g \in F_{\delta_0,R_0}$, we have
	\begin{align*}
		\norm{Mf - Mg}_{\delta_0} &\leq \norm{f - g}_{\delta_0} \sup_{\delta \in (0, \delta_0)} \left| \frac{2 \vert v(\delta, \lambda_\delta) \vert + |f(\delta)+g(\delta)| \vert v(\lambda_\delta, \lambda_\delta)\vert}{2v(\lambda, 0)} \right|\\
		&\le C\delta_0 (1+R_0)  \norm{f - g}_{\delta_0} \;.
	\end{align*}
	Choose $\delta_0$ such that $C\delta_0 (1+R_0) < 1$. Then $M$ admits a unique fixed point $a\in F_{\delta_0,R_0}$. \BLUE{One can then define $a_\delta$ arbitrarily on $(\delta_0,1)$ and this} suffices to conclude the proof.
\end{proof}

\begin{proof}[Proof of Theorem \ref{Th:Support2d}]
	Set $A:= \overline{\{(h,\vert \nabla G*h \vert^2 + c): h\in L^\infty_{p_b}(\R^2)\;,\; c\in\R\}}^{\ccM}$. For all $\eps \in (0,1)$, the r.v.~$Q_\eps$ takes values in the closed set $A$: consequently $\supp(Q_\eps) \subset A$. Since $Q$ is the limit in probability of $Q_\eps$, we deduce that $\supp(Q) \subset A$. It remains to prove the converse inclusion.\\
	Fix $c\in \R$. Assume that $(0,c) \in\supp(Q)$. Then Lemma \ref{Lemma:ShifthQ} implies that for all $h \in L^\infty_{p_b}$, $T_h (0,c) \in \supp(Q)$ and the desired inclusion follows. It remains to show that $(0,c) \in \supp(Q)$.\\
	Using the notations of Lemma \ref{Lemma:Resonance}, for any $\delta \in (0,1)$, we set
	$$ h_\delta(\xi) := -\xi_{\delta} + a_{\delta} \xi_{\lambda_\delta}\;,$$
	 By this lemma, there exists $\delta_n \downarrow 0$ such that on some event $\Omega_0'$ of full $\P$-measure, $T_{h_{\delta_n}(\xi)} Q(\xi)$ converges to $(0,c)$ for all $\xi\in\Omega_0'$.\\
	Fix $q \in \supp(Q) \cap Q(\Omega_0')$. There exists $\xi \in \Omega_0'$ such that $q = Q(\xi)$. By Lemma \ref{Lemma:ShifthQ}, $T_{h_{\delta_n}(\xi)} Q(\xi) \in \supp(Q)$. Since the support is a closed set, we deduce that $(0,c) \in \supp(Q)$.
\end{proof}

\section{The Anderson Hamiltonian in dimension three}\label{sec:3d}

We follow the same strategy as in dimension two. However, the construction of the (PAM) in dimension three
\begin{equation}\label{eq:PAM3d}
	\begin{cases}
		\partial_t u = \Delta u - \xi u,&  t >0,~ x \in \R^3,\\
		u(t = 0, \cdot) = f
	\end{cases}
\end{equation}
is much harder as the noise is more singular, we thus rely on~\cite{HL18} which performed its construction with the theory of regularity structures~\cite{Hai14}.\\
In this whole section, $\kappa > 0$ is a small parameter (it is implicitly taken as small as needed). Furthermore we set
$$ \gamma := \frac32 + 2\kappa\;,\quad \alpha := -\frac32-\kappa\;.$$

\subsection{The space of enhanced noises and the white noise case}

A regularity structure is a triplet $(\cA, \cT, \cG)$ satisfying the following properties:
	\begin{enumerate}
		\item $\cA \subset \R$ is a locally finite set of indices that is bounded from below and contains $0$.
		\item $\cT = \bigoplus_{\alpha \in \cA} \cT_\alpha$ is a graded vector space, where for each $\alpha \in \cA$, $\cT_\alpha$ is a finite-dimensional Banach space equipped with the norm $\norm{\cdot}_{\alpha}$. We impose $\cT_0 \simeq \R$ with the unit vector $\un$. For any vector $\tau$ in a finite-dimensional subspace of $\cT$, we write $\norm{\tau}$ for its Euclidean norm. 
		\item $\cG$ is a group acting on $\cT$ such that every element $\Gamma$ of $\cG$ satisfies $\Gamma \un = \un$ and for all $\tau \in \cT_{\alpha}$, $\Gamma\tau - \tau \in \cT_{<\alpha} := \bigoplus_{\beta \in \cA_{<\alpha}} \cT_{\beta}$ where $\cA_{<\alpha} = \cA \cap (-\infty, \alpha)$.
	\end{enumerate}

A basic example is the polynomial regularity structure where $\cA = \N$, and for each $n\in\N$, $\cT_n$ is the vector space spanned by all $X^k:=X_1^{k_1}X_2^{k_2}X_3^{k_3}$, $k\in \N^3$ such that $|k|:=k_1+k_2+k_3= n$ and $\cG$ is the group formed by all the transformations $\Gamma_h$ that translate any polynomial $P$ by the vector $h$: $\Gamma_h P(X) = P(X+h)$.

Let us now introduce the regularity structure associated to the (PAM). Let $\Xi$ be a formal expression (aimed at representing $\xi$ at the level of the model space $\cT$). Let $\cF$ and $\cU$ be the smallest sets of formal expressions such that $\cF$ contains $\Xi$, $\cU$ contains all polynomials $X^k$ and
\[\tau \in \cU \implies \tau \Xi \in \cF ~~\text{and}~~ \tau \in \cF \implies \cI(\tau) \in \cU.\]
Here $\tau\Xi$ and $\cI(\tau)$ denote new formal expressions that need to be considered. The formal expressions in $\cU$ will be used in the local description of the solution $u$ while those in $\cF$ will be used for the product $u\xi$.

Each expression $\tau$ is assigned a number $|\tau|$ called homogeneity, which is calculated by the following rules: (1) $|X^k| = |k|$, (2) $|\Xi| = \alpha$, (3) for any $\tau, \tau'$, $|\tau\tau'| = |\tau| + |\tau'|$, (4) for any $\tau$, $|\cI(\tau)| = |\tau| + 2$.\\
We thus set $\cA := \{|\tau|: \tau \in \cF \cup \cU\}$ and for $\alpha \in \cA$, we let $\cT_{\alpha}$ be the vector space spanned by all $\tau \in \cF \cup \cU$ such that $|\tau| = \alpha$. For the construction of the structure group $\cG$, we refer the reader to \cite[Sec. 8.1]{Hai14}.

From now on, we will \emph{always} restrict $\cU$ (resp.~$\cF$) to formal expressions whose homogeneities are below $\gamma$ (resp.~$\gamma +\alpha$), see Figure \ref{Table}.

\begin{figure}
	\begin{center}
		\begin{tabular}{p{2.1cm} l  | p{2.4cm} l  }\hline
			$\cU$ & $\cA(\cU)$  & $\cF$ & $\cA(\cF)$ \\
			\hline
			$\mathbf{1}$ &  $0$ & $\Xi$ &  $-\frac{3}{2}-\kappa$ \\
			$\cI(\Xi)$  & $\frac{1}{2}-\kappa$ & $\Xi\cI(\Xi)$ & $-1-2\kappa$ \\
			$\cI(\Xi\cI(\Xi))$ & $1-2\kappa$  & $\Xi\cI(\Xi\cI(\Xi))$ & $-\frac{1}{2}-3\kappa$ \\
			$X_i$ & $1$ & $\Xi X_i$ & $-\frac{1}{2}-\kappa$\\
			$\cI(\Xi\cI(\Xi\cI(\Xi)))$ & $\frac{3}{2}-3\kappa$ & $\Xi\cI(\Xi\cI(\Xi\cI(\Xi)))$ & $-4\kappa$ \\
			$\cI(\Xi X_i)$ & $\frac{3}{2}-\kappa$ & $\Xi\cI(\Xi X_i)$ & $-2\kappa$\\
			\hline
	\end{tabular}\end{center}
	\caption{Basis vectors and homogeneities in the regularity structure.}\label{Table}
\end{figure}

We also consider the notion of admissible models as introduced in~\cite[Definition 2.2]{HL18} associated to a compactly supported kernel $\bar{K}_+$ which coincides near $0$ with the Green's function of the Laplacian in dimension $3$. We will denote by $q=(\Pi,\Gamma)$ a generic admissible model.
\begin{remark}
	In~\cite{HL18}, admissible models are in space-time and are taken w.r.t.~the kernel $P_+$, which is a space-time function that coincides with the heat kernel near the origin: the setting therein was designed to encompass simultaneously the multiplicative (SHE) in dimension $1$ (for which the noise is space-time) and the (PAM) for which the noise is in space only. In the latter case, that we consider in this article, the stochastic objects only evolve in space so that the model is in space only and the kernel $\bar{K}_+$ is nothing but the integral in time of $P_+$.
\end{remark}
We recall the notation
\begin{align*}
	\| \Pi  \|_x := \sup_{\varphi\in\cB_r}\sup_{\lambda \in (0,1]} \sup_{\substack{\zeta\in\cA\\ \tau \in \cT_\zeta}} \frac{\vert\langle \Pi_x \tau  , \varphi^\lambda_x\rangle\vert}{\lambda^{\zeta} \|\tau\| }\;,\;\| \Gamma  \|_{x,y} := \sup_{\substack{\beta \le \zeta \in \cA\\ \tau \in \cT_\zeta}} \frac{\vert \Gamma_{x,y} \tau \vert_{\beta}}{|x-y|^{\zeta-\beta}\|\tau\|}\;.
\end{align*}
The set of all admissible models $q=(\Pi,\Gamma)$ is endowed with the distance
$$ \| q;\bar{q}\| := \sup_{x\in \R^3} \frac{\| \Pi - \bar{\Pi} \|_x}{p_\kappa(x)} + \sup_{x,y\in \R^3:|x-y|\le 1} \frac{\| \Gamma - \bar{\Gamma} \|_{x,y}}{p_\kappa(x)}\;.$$
It turns out that for admissible models the $\Gamma$ can be read off the $\Pi$: more precisely
\begin{equation}\label{def:normm}
	\normm{q;\bar{q}} := \sup_{x\in\R^3} \sup_{\varphi\in\cB_r}\sup_{\lambda \in (0,1]} \BLUE{\sup_{\substack{\zeta\in\cA\\ \tau \in \cT_\zeta}}} \frac{\vert \langle \Pi_x \tau - \bar{\Pi}_x\tau , \varphi^\lambda_x\rangle \vert}{\lambda^{|\tau|} p_\kappa(x)\BLUE{\|\tau\|}}\;,
\end{equation}
defines an equivalent metric on the set of admissible models, see for instance~\cite[Sec 2.4]{HW14}.\\
An admissible model $q=(\Pi,\Gamma)$ is called \emph{smooth} if $\Pi_x\tau \in C^\infty(\R^3)$ for all $x\in\R^3$ and all $\tau\in\cA_{}$. We then let $\ccM$ be the closure of all smooth models in the set of admissible models, it is a complete and separable metric space.\\

We fix some parameter $b \in (0,\kappa/4)$. Given some function $h\in C^\infty_{p_{b}}(\R^3)$, one can define the canonical model $q_{\mbox{\tiny can}}(h)$ associated to $h$: this is the only admissible model satisfying
$$\Pi^{\mbox{\tiny can}}(h)_x(\Xi)(y) = h(y) \mbox{ and }\Pi^{\mbox{\tiny can}}(h)_x (\tau\bar\tau)(y) = \Pi^{\mbox{\tiny can}}(h)_x (\tau)(y) \Pi^{\mbox{\tiny can}}(h)_x (\bar\tau)(y)\;.$$
Let us mention that we took $b$ smaller than $\kappa/4$ in order for the norm of this model to be finite: indeed $4$ is the largest number of occurrences of $\Xi$ in the basis vectors of the regularity structure at stake and $p_b^4 \le p_\kappa$.\\
\BLUE{In order to implement some renormalisation procedure, it is necessary to introduce the notion of \emph{renormalisation group}. In the case of the generalised (PAM), it is a set of maps $M$ acting on a subspace $\cT_0$ of the regularity structure $\cT$, that satisfy several algebraic conditions listed in~\cite[Sec 4.2]{HP15} and which allow to associate to any model $(\Pi,\Gamma)$ a renormalised model $(\hat{\Pi}, \hat{\Gamma})$: we refer to~\cite[Eq (4.4)]{HP15} for the expression of the model.\\
In~\cite[Section 4.3]{HP15}, a three dimensional renormalisation subgroup was identified in the context of the generalised (PAM) and this subgroup was sufficient to renormalise the model associated to a white noise. In the present article, we only deal with the linear (PAM) so that a two dimensional renormalisation subgroup suffices. This two dimensional subgroup is parametrized by a pair $\bar{c} = (c,c^{(1)}) \in \R^2$ and the map $M$ writes $\exp(-cL - c^{(1)} L^{(1)})$ for some linear maps $L$ and $L^{(1)}$ provided in~\cite[Section 4.3]{HP15}. Given a canonical model $q_{\mbox{\tiny can}}(h)$, this produces a \emph{renormalised} canonical model $\ccR^{\bar c} q_{\mbox{\tiny can}}(h) = (\hat{\Pi}(h), \hat{\Gamma}(h))\in\ccM$ that satisfies
	\[ \hat{\Pi}(h)_x \Xi \cI(\Xi)(x) = - c\;,\quad \hat{\Pi}(h)_x \Xi \cI(\Xi \cI(\Xi \cI(\Xi)))(x) = - c^{(1)}\;,\quad x\in\R^3\;.\]}

We set $\Omega := \cC^{-\frac32-\kappa}_{p_b}(\R^3)$ endowed with the law $\P$ of white noise. The canonical variable on $\Omega$ will be denoted $\xi$. Following~\cite[Theorem 5.3]{HL18}, we consider the {renormalised} canonical model $Q_\eps(\xi) = \ccR^{\bar c_\eps} q_{\mbox{\tiny can}}(\xi_\eps)$ associated to the smooth noise $\xi_\eps = \xi * \varrho_\eps$ and \BLUE{a well-chosen pair of renormalisation constants $(c_\eps, c_\eps^{(1)})$. Let us mention that the renormalisation constants satisfy as $\eps\downarrow 0$
	$$c_\eps = c_\varrho \eps^{-1} + O(1)\;,\quad c_\eps^{(1)} = \ln \eps^{-1} + O(1)\;,$$
	where $c_\varrho \in \R$ only depends on the function $\varrho$.}
The r.v.~$Q_\eps$ converges in $\ccM$ to some limit $Q$ in probability, see~\cite[Theorem 5.3]{HL18}. Actually, in finite volume, this result was proven by Hairer and Pardoux~\cite{HP15}, its extension to infinite volume rests on the same arguments as in \eqref{Eq:xieps2d} with $\xi_\eps$ ``replaced'' by $Q-Q_\eps$: all the stochastic objects live in finite Wiener chaoses and therefore admit finite moments.\\

To identify the spectrum of $\cH$ in Section \ref{sec:spectrum}, we will need to show a commutation property: the operator $\cH(\theta_x \xi)$ associated to the shifted noise coincides with the operator $\cH(\xi)$ conjugated with spatial shifts, see Lemma \ref{lem:shiftshift}. One therefore needs to construct the shifted model for \emph{all} $x\in\R^3$ simultaneously. This is the purpose of the next result. \BLUE{Before that, let us introduce the following notation. For any $q = (\Pi,\Gamma)\in\ccM$ and any $x\in\R^d$, we set $\theta_x q =(\theta_x \Pi,\theta_x \Gamma)$ where for any $y,z\in\R^3$
	$$ (\theta_x \Pi_y) \tau := \theta_x (\Pi_{y-x} \tau)\;,\quad \theta_x \Gamma_{y,z} := \Gamma_{y-x,z-x}\;.$$
Note that these identities imply that for any $\varphi \in \cB_r$
$$ \langle (\theta_x \Pi_y) \tau , \varphi\rangle = \langle \Pi_{y-x} \tau , \varphi(\cdot + x)\rangle\;.$$
}

\begin{lemma}\label{lem:Omega03d}
	There exists a sequence $\eps_k \downarrow 0$ such that the set
\begin{equation}\label{Eq:Omega0_3d}
	\Omega_0 : =\Big\{\xi\in\Omega: Q_{\eps_k}(\theta_x \xi) \mbox{ converges in $\ccM$ as } k\to\infty\;,\quad \forall x\in \R^3\Big\}\;,
\end{equation}
is of full $\P$-measure and is invariant under all $\theta_x, x\in \R^3$. The\footnote{$Q$ is defined as a limit on the set of full measure $\Omega_0$, and is extended to $\Omega$ arbitrarily.} limit $Q$ satisfies $Q(\theta_x \xi) = \theta_x Q(\xi)$ for all $\xi \in\Omega_0$.\\
As $\eps\downarrow 0$, the field $(Q_\eps(\theta_x \xi))_{x\in \R^3}$ which takes values in $\ccM$ converges in probability, locally uniformly over $x\in \R^3$, to $(Q(\theta_x \xi))_{x\in \R^3}$.
\end{lemma}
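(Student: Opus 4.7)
The plan is to transpose the proof of Lemma \ref{lem:Omega02d} to the setting of admissible models; the main work, and the chief obstacle, is identifying the correct shift operator on $\ccM$ and establishing the analogue of the weighted contraction bound used in the two-dimensional case.

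For $x_0\in\R^3$ and an admissible model $q=(\Pi,\Gamma)\in\ccM$, I define $\theta_{x_0}q=(\Pi^{\theta_{x_0}q},\Gamma^{\theta_{x_0}q})$ by
\begin{equation*}
\langle\Pi^{\theta_{x_0}q}_x\tau,\psi\rangle:=\langle\Pi^q_{x+x_0}\tau,\psi(\cdot-x_0)\rangle,\qquad \Gamma^{\theta_{x_0}q}_{x,y}:=\Gamma^q_{x+x_0,\,y+x_0},
\end{equation*}
for any test function $\psi$, any basis vector $\tau$ and any $x,y\in\R^3$. The admissibility conditions with respect to the fixed kernel $\bar{K}_+$ are preserved by this operation since they only involve convolutions of $\Pi$ against $\bar{K}_+$ and the latter is translation invariant. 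Because the canonical model $q_{\mbox{\tiny can}}(h)$ is built pointwise from products of $h$ and convolutions against $\bar{K}_+$, and because the renormalisation constants $c_\eps,c_\eps^{(1,1)},c_\eps^{(1,2)}$ depend only on the mollifier $\varrho$, one readily checks the equivariance $Q_\eps(\theta_{x_0}\xi)=\theta_{x_0}Q_\eps(\xi)$ for every $\eps\in(0,1)$, $\xi\in\Omega$ and $x_0\in\R^3$.

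The key quantitative step is the contraction bound
\begin{equation*}
\normm{\theta_{x_0}q;\theta_{x_0}\bar q}\le p_\kappa(x_0)\,\normm{q;\bar q},\qquad q,\bar q\in\ccM,\ x_0\in\R^3,
\end{equation*}
where $\normm{\cdot;\cdot}$ is the equivalent metric \eqref{def:normm}. It is obtained from the substitution $y=x+x_0$, the identity $\varphi^\lambda_x(\cdot-x_0)=\varphi^\lambda_{x+x_0}(\cdot)$, and the elementary inequality $p_\kappa(y)\le p_\kappa(y-x_0)\,p_\kappa(x_0)$, which follows from $1+|y|\le(1+|y-x_0|)(1+|x_0|)$. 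Combined with the equivariance it yields
\begin{equation*}
\sup_{x_0\in\R^3}\frac{1}{p_\kappa(x_0)}\normm{Q_\eps(\theta_{x_0}\xi);Q_{\eps'}(\theta_{x_0}\xi)}\le \normm{Q_\eps(\xi);Q_{\eps'}(\xi)}
\end{equation*}
for all $\xi\in\Omega$ and all $\eps,\eps'\in(0,1)$.

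The remainder of the argument is exactly parallel to the end of the proof of Lemma \ref{lem:Omega02d}. From~\cite{HP15,HL18}, $Q_\eps(\xi)$ converges in $L^p(\Omega,\P)$ in $\ccM$ for every $p\ge 1$, as all stochastic objects live in a finite number of Wiener chaoses with moment bounds uniform in $\eps$. The contraction bound transfers this convergence into convergence of $(Q_\eps(\theta_x\xi))_{x\in\R^3}$, viewed as an $L^\infty_{p_\kappa}(\R^3\to\ccM)$-valued random variable, in $L^p(\Omega,\P)$; extracting a deterministic sequence $\eps_k\downarrow 0$ along which the convergence is almost sure defines the set $\Omega_0$ of \eqref{Eq:Omega0_3d}. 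Its translation invariance, the identity $Q(\theta_x\xi)=\theta_x Q(\xi)$ on $\Omega_0$, and the locally uniform convergence in probability are then read off via the equivariance and the continuity of $\theta_x$ on $\ccM$, exactly as in the two-dimensional case.
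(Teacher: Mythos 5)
Your proof follows the same route as the paper: translate the key identity between $\Pi^{(\eps)}(\theta_x\xi)$ and $\Pi^{(\eps)}(\xi)$ into the weighted contraction $\normm{Q_\eps(\theta_x\xi);Q_{\eps'}(\theta_x\xi)}\le p_\kappa(x)\normm{Q_\eps(\xi);Q_{\eps'}(\xi)}$, invoke the $L^p(\Omega,\P)$ convergence of $Q_\eps$ from \cite{HL18}, transfer this to the $L^\infty_{p_\kappa}(\R^3,\ccM)$-valued field, extract a subsequence, and finish as in dimension~$2$. You go slightly beyond the paper in that you explicitly write out the shift operator on the space of admissible models and observe that admissibility, the canonical model and the renormalisation constants are all equivariant; the paper suppresses this definition and only records the resulting identity on $\Pi^{(\eps)}$.

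One small slip: your definition
$\langle\Pi^{\theta_{x_0}q}_x\tau,\psi\rangle:=\langle\Pi^q_{x+x_0}\tau,\psi(\cdot-x_0)\rangle$
is the shift by $-x_0$, not $x_0$. With $\theta_{x_0}\xi(\cdot)=\xi(\cdot-x_0)$ and $\Pi^{(\eps)}_y(\theta_{x_0}\xi)(\Xi)(z)=\xi_\eps(z-x_0)$, the equivariance $Q_\eps(\theta_{x_0}\xi)=\theta_{x_0}Q_\eps(\xi)$ forces
$\langle\Pi^{\theta_{x_0}q}_y\tau,\psi\rangle=\langle\Pi^q_{y-x_0}\tau,\psi(\cdot+x_0)\rangle$,
which is precisely the paper's $\langle \Pi^{(\eps)}_y(\theta_x \xi)\tau , \varphi_y\rangle = \langle \Pi^{(\eps)}_{y-x}(\xi)\tau , \varphi_{y-x}\rangle$. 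Since $p_\kappa$ is even, your weighted contraction bound and the subsequent $L^p$-convergence argument are unaffected, so this is a notational, not a substantive, error — but it should be corrected for the stated identity $Q(\theta_x\xi)=\theta_xQ(\xi)$ to hold with the conventional sign.
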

\begin{proof}
	\BLUE{We write $Q_\eps =: (\Pi^{(\eps)}, \Gamma^{(\eps)})$. }It is straightforward to check that for any $\eps\in (0,1)$, any $x\in\R^3$ and any $\xi\in\Omega$\BLUE{
	$$ Q_\eps(\theta_x \xi) = \theta_x Q_\eps(\xi)\;.$$}
	This observation combined with the definition of the metric \eqref{def:normm} leads to
	$$ \normm{Q_\eps(\theta_x \xi); Q_{\eps'}(\theta_x \xi)} \le \sup_{y\in \R^3} \frac{p_\kappa(y-x)}{p_\kappa(y)}\; \normm{Q_\eps(\xi); Q_{\eps'}(\xi)} = p_\kappa(x) \normm{Q_\eps(\xi); Q_{\eps'}(\xi)}\;,$$
	for all $\xi\in\Omega$ and all $x\in\R^d$. The arguments in~\cite[Theorem 5.3]{HL18} show that, for any given $p>1$, $Q_\eps(\xi)$ converges in $L^p(\Omega,\P)$ as $\eps\downarrow 0$ to some limit $Q(\xi)$ in $\ccM$. We thus deduce that the collection of r.v.~$(Q_\eps(\theta_x \xi),x\in\R^3)$, taking values in $L^\infty_{p_\kappa}(\R^3,\ccM)$, converges as $\eps\downarrow 0$ in $L^p(\Omega,\P)$. Therefore there exists a deterministic sequence $\eps_k \downarrow 0$ such that the convergence holds almost surely. We thus consider the set of full $\P$-measure
	$$ \Omega_0 := \Big\{\xi\in\Omega: Q_{\eps_k}(\theta_x \xi) \mbox{ converges in $\ccM$ as }k\to\infty\;,\quad \forall x\in \R^2\Big\}\;.$$
	We can conclude the proof using the exact same arguments as in dimension $2$, see the proof of Lemma \ref{lem:Omega02d}.
\end{proof}

\subsection{The parabolic evolution}

\begin{theorem}\label{Th:PAM3d}
	There exists a map $(q,f,t)\mapsto u^{q,f}(t,\cdot)$ defined on $\ccM\times \Big( \bigcup_{\ell_0\in\R} L^2_{e_{\ell_0}}\Big) \times \R_+$ such that for any $\ell_0\in\R$ the following properties hold:
	\begin{enumerate}
		\item (Continuity in the data) For any $t>0$, the map $(q, f) \mapsto u^{q, f}(t,\cdot)$ is continuous from $\ccM \times L^2_{e_{\ell_0}}$ into $L^2_{e_{\ell_0+t}}$, and is linear w.r.t.~$f$.
		\item (Continuity in time) For any $(q,f) \in \ccM\times L^2_{e_{\ell_0}}$ and any $T>0$, the map $t \mapsto u^{q, f}(t, \cdot)$ is continuous from $[0,T]$ to $L^2_{e_{\ell_0 + T}}$.
		\item (Semigroup property) For any $(q,f) \in \ccM\times L^2_{e_{\ell_0}}$ and any $0\le s \le t$, $u^{q, f}(t, \cdot)$ coincides with $u^{q,g}(s,\cdot)$ where $g = u^{q,f}(t-s,\cdot)$.
		\item (Symmetry) For any $q\in \ccM$, $f,g \in C_c$ and any $t\ge 0$, it holds
		$$ \langle f, u^{q,g}(t)\rangle =  \langle u^{q,f}(t) , g\rangle\;.$$
		\item (Classical solution) When $q = Q_\eps(\xi)$ for some $\xi\in\Omega$ and $f\in L^2_{e_{\ell_0}}$, $u^{q, f}$ coincides with the classical solution to \eqref{eq:PAM3d} with noise term $\xi_\eps + C_\eps$ \BLUE{where $C_\eps := c_\eps + c_\eps^{(1)}$.}
	\end{enumerate}
\end{theorem}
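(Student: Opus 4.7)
The approach mirrors the two dimensional case (Theorem \ref{Th:PAM2d}) but replaces the elementary mild formulation by its lifted analogue at the level of modelled distributions, following the regularity structures framework developed in \cite{HL18,Hai14}. Concretely, given a model $q\in\ccM$ I would solve the abstract fixed point equation
\[
U \;=\; \cP \mathbf{1}_{t>0}( - U\, \Xi) \,+\, K f\;,
\]
where $\cP$ is the abstract integration operator associated to the heat kernel, in a space $\cD^{\gamma,\eta}_{\ell_0,T}$ of modelled distributions on $(0,T]\times \R^3$ with abstract regularity $\gamma = \tfrac32+2\kappa$, initial-time exponent $\eta$ tuned to accommodate an $L^2_{e_{\ell_0}}$ initial datum, polynomial spatial weight $p_\kappa$ controlling the model, and a time-dependent exponential weight $e_{\ell_0+t}$ analogous to the one used in Section \ref{sec:2d}. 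The reconstruction $u^{q,f}(t,\cdot)$ of $U(t,\cdot)$ then lives in $L^2_{e_{\ell_0+t}}$ and the linearity in $f$ is immediate from the equation.

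The key analytic step is to establish the Schauder and product estimates for modelled distributions in this weighted setting. The multiplication estimates are essentially those of \cite[Sec.~6]{Hai14}, while the Schauder estimate is adapted as in \cite[Sec.~5]{Hai14}, with the exponential time-weight handled via \eqref{Eq:TrickWeights}, exactly as in the two dimensional proof. Since $\alpha+2 = \tfrac12 - \kappa > 0$ and $\gamma < \alpha + 2 + (\tfrac12-\kappa)$ the fixed point map is a contraction on a time interval whose length depends only on $\|q\|_{\ccM}$, yielding short time existence, uniqueness and Lipschitz continuity of $(q,f)\mapsto U$; iteration in time then produces a global solution map, which gives property~1. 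Property~2 (continuity in time) is obtained exactly as in the proof of Theorem \ref{Th:PAM2d}, splitting $u(t)-u(t-\eps)$ into three pieces and using the strong continuity of the heat semigroup on $L^2_{e_\ell}$ together with the Schauder estimate; property~3 follows from uniqueness after restarting the fixed point at time $s$ with initial datum $u^{q,f}(t-s,\cdot)$; and property~5 is immediate since for $q=Q_\eps(\xi)$ the reconstruction of $U$ coincides with the classical solution to the renormalised smooth PDE, by the very definition of the renormalised canonical model $\ccR^{\bar c_\eps} q_{\mbox{\tiny can}}(\xi_\eps)$.

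For property~4, I would first treat the case where $q$ is the canonical lift $q_{\mbox{\tiny can}}(h)$ of a smooth compactly supported $h$: then $u^{q,f}$ is a classical strong solution to $\partial_t u = \Delta u - h u$, which has a symmetric generator, so the identity $\langle f, u^{q,g}(t)\rangle = \langle u^{q,f}(t), g\rangle$ is a direct integration by parts computation on the constant-in-$s$ quantity $s\mapsto \langle u^{q,f}(s), u^{q,g}(t-s)\rangle$. The symmetry then extends to all $q\in\ccM$ by combining property~1 with the fact that canonical lifts of smooth compactly supported functions are dense in $\ccM$ (this density being exactly what is encoded in the definition of $\ccM$ as the closure of smooth admissible models). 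The main obstacle in this programme is the careful bookkeeping of the two families of weights (polynomial on the model side, exponential on the solution side) through the Schauder and reconstruction estimates, in such a way that the contraction time depends only on $\|q\|_{\ccM}$ and not on the exponential parameter $\ell_0$, which is what allows the iteration in time and ultimately the validity of all five properties uniformly in $\ell_0$.
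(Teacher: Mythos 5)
Your high-level plan matches the paper's (both use the regularity structures solution theory of \cite{HL18} for the abstract fixed point and then port the two-dimensional arguments), but there are two places where your proposal skips over the genuine difficulty and one where your argument has an outright gap.

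\textbf{The missing ingredient for properties 1--3.} You say that continuity in time ``is obtained exactly as in the proof of Theorem \ref{Th:PAM2d}, splitting $u(t)-u(t-\eps)$ into three pieces.'' That argument is built on the \emph{mild formulation} $u(t) = \int_0^t K(t-s)*\cR(\textrm{u}\Xi)(s)\,ds + K(t)*f$, and establishing this in dimension three is precisely the nontrivial step that the paper's proof is devoted to. The modelled-distribution solution \textrm{u} of \cite{HL18} lives in a \emph{space-time} space $\ccD^{\gamma,\eta,2}_{w,T}$, and its reconstruction is a priori a space-time distribution; turning it into a well-defined continuous path $t \mapsto \cR(\textrm{u}\Xi)(t,\cdot)$ valued in a weighted Besov space on $\R^3$ requires the time-sliced reconstruction theorem (Proposition \ref{Prop:Recons}) together with the H\"older-in-time estimates in the spaces $\ccE^{\beta,\nu}_{\ell_0,\delta,T}$. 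Without this, you cannot even write the integral above, so the ``three-pieces'' decomposition is not available. Your outline as written treats the spatial regularity-structures analysis as the hard part and the passage to a time-continuous reconstruction as automatic, whereas it is the other way around: the spatial analysis is imported from \cite{HL18} and the paper's contribution here is the time-slicing.

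\textbf{A genuine gap in property 4.} You reduce to $q = q_{\mbox{\tiny can}}(h)$ for a smooth compactly supported $h$ and then invoke density of such canonical lifts in $\ccM$, claiming this is ``exactly what is encoded in the definition of $\ccM$.'' This is not correct: $\ccM$ is the closure of all \emph{smooth admissible models}, and canonical lifts form a strict and non-dense subset of these. The renormalised models $\ccR^{\bar c}q_{\mbox{\tiny can}}(h)$ with $\bar c \neq 0$ are smooth but not canonical, and in general cannot be approximated in $\ccM$ by pure canonical lifts (one cannot approximate a model whose component corresponding to $\Xi\cI(\Xi)$ equals a strictly negative constant by models where this component is constrained to a nonnegative product). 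The paper's argument reduces only to smooth admissible models and then uses a structural fact specific to (PAM): for \emph{any} smooth admissible model---canonical or not---there is a smooth function $\zeta$ (absorbing the renormalisation constants) such that $\cR(\textrm{u}\Xi)(t,x) = u(t,x)\zeta(x)$. This is what makes the integration-by-parts argument go through on the dense set. Your proof would need this observation; restricting to canonical lifts does not cover the support of $Q$, let alone $\ccM$.
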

	
	\BLUE{The rest of this subsection is devoted to the proof of this theorem. For simplicity, we work with $T\in (0,1)$ as it simplifies some notations (but the proof can be adapted to cover a general $T$) and we fix some $\ell_0\in\R$. We use some material from~\cite{HL18}, that we need to introduce. Recall that $\kappa>0$ is a small parameter, that $\gamma = \frac32 + 2\kappa$ and $\alpha = -\frac32 - \kappa$. We set $\eta = - \frac12 + 3\kappa$, this parameter will control the worst regularity allowed for the initial condition in the solution theory of the PAM (we do not need such a bad regularity in the present context). Given a model $q\in\ccM$, we introduce the space of \emph{modelled distributions} $\cD^{\gamma,\eta,2}_{T,w}(q)$ as the set of all maps $\textrm{h}:(0,T]\times \R^3\to \cT_{<\gamma}$ satisfying
	\begin{align}
		&\sup_{t\in (0,T]} \bigg\| \frac{\vert \textrm{h}(t,x) \vert_\zeta}{t^{\frac{(\eta-\zeta)\wedge 0}{2}} w^{(1)}_t(x,\zeta)} \bigg\|_{L^2} \\
		&\sup_{\lambda \in (0,2]}\sup_{t\in (2\lambda^2,T]} \bigg\|\int_{y\in B(x,\lambda)} \lambda^{-3} \frac{\vert \textrm{h}(t,y) - \Gamma_{y,x} \textrm{h}(t,x) \vert_\zeta}{t^{\frac{\eta-\gamma}{2}} w^{(2)}_t(x,\zeta) \lambda^{\gamma-\zeta}} dy \bigg\|_{L^2} \label{Eq:hSpaceReg}\\
		&\sup_{t\in (0,T]}\sup_{s \in (t/2,t)} \bigg\|\frac{\vert \textrm{h}(t,x) - \textrm{h}(s,x) \vert_\zeta}{t^{\frac{\eta-\gamma}{2}} w^{(1)}_t(x,\zeta) \vert t-s\vert^{\frac{\gamma-\zeta}{2}}} \bigg\|_{L^2} \label{Eq:hTimeReg}\;.
	\end{align}
	Here the functions $w^{(i)}_t(x,\zeta)$ are weights, their precise expressions are provided in~\cite[Eq (4.1)]{HL18} but are unimportant for this proof so we do not present them: let us simply mention that they all coincide with $e_{\ell_0+t}(x)$ up to some factors $p_{a}(x)$, for some constants $a$ that depend on $\zeta$.\\
	
	The general idea of the theory of regularity structures consists in lifting the stochastic PDE at stake at the abstract level of modelled distributions. In the present setting this means that, instead of identifying a fixed point in some space of functions or distributions to the mild formulation of the PAM:
	$$ u = K*(-u\xi) + K*f\;,$$
	one identifies a fixed point, in some space of modelled distributions given some model $q$, to the equation:
	\begin{equation}\label{Eq:FixedPt3d}
		\textrm{u} = (\cP_+ + \cP_-) ( -\textrm{u} \Xi) + \cP f\;.
	\end{equation}
	Here $\cP f$ is a modelled distribution that stands for $K*f$, while $\cP_++\cP_-$ is the lifted space-time convolution operator associated to the heat kernel $K$. Actually, in the theory of regularity structures, $K$ is split into a local part $K_+$ which admits a singularity at the origin, and a global part $K_-$ which is smooth: the operators $\cP_+$ and $\cP_-$ are associated respectively to each of these two terms. Since we do not need details on these two operators, we do not recall their expressions.\\
	It is proven in~\cite[Theorem 5.2]{HL18} that there is a unique solution $\textrm{u}\in\ccD^{\gamma,\eta,2}_{T,w}(q)$ to the fixed point problem~\eqref{Eq:FixedPt3d} and that this solution is continuous w.r.t.~the pair $(q,f)$.\\
	This solution $\textrm{u}$ is ``abstract'' unless we can map it to genuine functions or distributions. This is the purpose of the reconstruction operator $\cR$, which is a cornerstone of the theory. In Proposition \ref{Prop:Recons} we will construct a particular instance of this operator. For the moment, let us simply mention that it interacts nicely with the convolution operator, indeed by~\cite[Theorem 4.3 and Lemma 4.6]{HL18}, it holds
	$$ \cR \Big((\cP_+ + \cP_-) ( \textrm{u} \Xi)\Big) = K* \cR (\textrm{u} \Xi)\;,$$
	as well as $\cR (\cP f) = Kf$. Consequently the fixed point $\textrm{u}$ satisfies
	$$ \cR  (\textrm{u}) = K* \cR (-\textrm{u} \Xi) + Kf\;.$$
	To complete the picture, let us mention that if ones takes $q = Q_\eps$ then it holds that
	$$ \cR (\textrm{u}) = K* \Big((\cR \textrm{u}) (-\xi_\eps - C_\eps)\Big) + Kf\;,$$
	so that $\cR (\textrm{u})$ coincides with the classical solution $u_\eps$ of
	$$ \partial_t u_\eps = \Delta u_\eps + u_\eps(-\xi_\eps - C_\eps)\;.$$
	The continuity of the solution map and of the reconstruction operator shows that the sequence $u_\eps = \cR (\textrm{u})$ converges as $\eps\downarrow 0$ to a limit $u$ which is the reconstruction $\cR (\textrm{u})$ of the solution $\textrm{u}$ associated to the renormalised model $q=Q$, the latter being the limit of $Q_\eps$ as $\eps\downarrow 0$.\\
	
	Let us now come back to Theorem \ref{Th:PAM3d}. Most of the ingredients were established in~\cite{HL18}. To complete the proof, we need to argue} that $\cR (\textrm{u} \Xi)$ can be seen as a continuous function of time valued in a space of distributions on $\R^3$: this will allow us to write the convolution in space-time as an integral in time of a convolution in space, see~\eqref{Eq:Mild3d}, and then to apply the very same arguments as in dimension $2$ to deduce the properties listed in the statement of the theorem.\\
	
	For $\beta < 0$, $\ell\in\R$ and $\delta > 0$, let $\cB^{\beta}_{\ell,\delta}$ be the Banach space of all distributions $g$ on $\R^3$ such that
	\begin{equation}
		\norm{g}_{\cB^{\beta}_{\ell,\delta}} := \sup_{\lambda \in (0, 1]} \norm{ \sup_{\varphi \in \cB^r(\R^3)} \frac{|\crochet{g, \varphi^\lambda_{x}}|}{e_\ell(x) p_\delta(x)\lambda^\beta}}_{L^2(\R^3, \dd x)} < \infty\;.
	\end{equation}
	This space is nothing but a weighted Besov space with parameters $p=2$ and $q=\infty$. Given $\nu\in\R$ and $T>0$, let $\ccE^{\beta,\nu}_{\ell_0,\delta, T}$ be the Banach space of all functions $g$ on $(0, T)$ valued in the space of distributions on $\R^3$ such that
	$$ \| g\|_{\ccE^{\beta,\nu}_{\ell_0,\delta, T}} := \sup_{t\in (0,T)} \frac{\| g(t)\|_{\cB^{\beta}_{\ell_0+t,\delta}}}{t^{\frac{\nu}{2}}} + \sup_{t\in (0,T): s\in (t/2,t)} \frac{\| g(t)-g(s)\|_{\cB^{\beta}_{\ell_0+t,\delta}}}{t^{\frac{\nu}{2}}|t-s|^{\kappa/4}}<\infty\;.$$
	\BLUE{Let us mention that the parameter $\nu$ controls the blow-up of the norm at time $0$.\\
		In order to establish Theorem \ref{Th:PAM3d}, we will prove the following
	\begin{proposition}\label{Prop:ContinuityInTime}
	There exists some constant $\delta > 0$ such that
	$$ (q,f) \mapsto \cR(\textrm{u} \Xi)\;,$$
	is a continuous map from $\ccM \times L^2_{e_{\ell_0}}$ into $\ccE^{\alpha,\eta+\alpha-\kappa/2}_{\ell_0,\delta, T}$.
	\end{proposition}
	Let us postpone the proof of this proposition and carry out the proof of the theorem.}
	\begin{proof}[Proof of Theorem \ref{Th:PAM3d}]
	Using Proposition \ref{Prop:ContinuityInTime}, we deduce that
	\begin{equation}\label{Eq:Mild3d}
		u^{q,f}(t) := \cR(\textrm{u})(t) = \int_0^t K(t-s)* \cR (-\textrm{u} \Xi)(s) ds+ K(t)*f\;,\quad t \in (0,T)\;.
	\end{equation}
	Then the very same arguments as in dimension $2$ allow to deduce Properties 1., 2.~and 3. On the other hand, Property 5.~was already proven in~\cite[Theorem 5.3]{HL18}. Regarding Property 4., the continuity properties already proven show that one can restrict oneself to a smooth model $q$. In this context, there exists a smooth function $\zeta$ that only depends on the model $q$ which is such that
	$$ \cR (\textrm{u} \Xi)(t,x) = u(t,x) \zeta(x)\;.$$
	If we let $u^f(t,x), u^g(t,x)$ be the solutions starting from $f,g$ then these two functions are strong solutions to the PDE
	$$ \partial_t u = \Delta u - u \zeta\;,$$
	starting from $f$ and $g$ respectively. Now if we set
	$$ h(s) := \langle u^f(t-s) , u^g(s) \rangle\;,\quad s\in [0,t]\;,$$
	then the desired property can be written as $h(0) = h(t)$. However $h$ is a continuous function and an integration by parts shows that $h'(s) = 0$ for all $s\in (0,t)$.
	\end{proof}
	
	We are left with the proof of Proposition \ref{Prop:ContinuityInTime}. 	\BLUE{As pointed out above, the bulk of the work consists in proving that the reconstruction of $\textrm{f} := \textrm{u}\Xi$ can be seen as a function of time taking values in a space of distributions of the space variable. This requires some work at the level of the reconstruction operator. Let us recall that a reconstruction operator $\cR$ acting on space-time modelled distributions was constructed in~\cite[Thm 3.10]{HL18}. In the proposition below, we will define a reconstruction operator that acts on modelled distributions in space only. Actually, for any given $t>0$ we will define a reconstruction operator $\cR_t$ acting on some space of modelled distributions whose norm depends on $t$. Later on, we will apply $\cR_t$ to the $t$-marginal of $\textrm{f}$. Of course, the family $\cR_t$, $t >0$ is consistent in the sense that $\cR_t$ and $\cR_s$ match on the intersection of the spaces of modelled distributions on which they act.\\}
	Fix $\tilde{\gamma} > 0$, $\tilde{\eta} < \alpha$, $\delta > 0$ and $t>0$. Given a model $q$, we let $\ccD^{\tilde{\gamma},\tilde{\eta}}_{\ell_0,\delta,t}(q)$ be the set of maps $\textrm{h}:\R^3 \to \cT$ satisfying
	\begin{align*}
		\|\textrm{h}\|_{\tilde{\gamma},\tilde{\eta},\ell_0,\delta,t} &:= \sum_{\zeta \in \cA_{<\tilde\gamma}}\Big( \bigg\| \frac{\vert \textrm{h}(x) \vert_\zeta}{t^{\frac{\tilde\eta-\zeta}{2}} e_{\ell_0+t}(x)p_\delta(x)} \bigg\|_{L^2}\\
		&+ \sup_{\lambda \in (0,\sqrt{t/2}]} \bigg\|\int_{y\in B(x,\lambda)} \lambda^{-3} \frac{\vert \textrm{h}(y) - \Gamma_{y,x} \textrm{h}(x) \vert_\zeta}{t^{\frac{\tilde\eta-\tilde{\gamma}}{2}}e_{\ell_0+t}(x)p_\delta(x) \lambda^{\tilde\gamma-\zeta}} dy \bigg\|_{L^2}\Big)\;.
	\end{align*}
	Given two models $q,\bar q$, we introduce the distance
	\begin{align*}
		\|\textrm{h};\bar{\textrm{h}}\|_{\tilde{\gamma},\tilde{\eta},\ell_0,\delta,t} &:= \sum_{\zeta \in \cA_{<\tilde\gamma}} \Big(\bigg\| \frac{ \vert \textrm{h}(x)-\bar{\textrm{h}}(x) \vert_\zeta}{t^{\frac{\tilde\eta-\zeta}{2}} e_{\ell_0+t}(x)p_\delta(x)} \bigg\|_{L^2}\\
	&+ \sup_{\lambda \in (0,\sqrt{t/2}]} \bigg\|\int_{y\in B(x,\lambda)} \lambda^{-3} \frac{\vert \textrm{h}(y) -\bar{\textrm{h}}(y) - \Gamma_{y,x} \textrm{h}(x) +\bar\Gamma_{y,x} \bar{\textrm{h}}(x)\vert_\zeta}{t^{\frac{\tilde\eta-\tilde{\gamma}}{2}} e_{\ell_0+t}(x)p_\delta(x) \lambda^{\tilde\gamma-\zeta}} dy \bigg\|_{L^2}\Big)\;.
	\end{align*}
	
	\begin{proposition}\label{Prop:Recons}
	For any $t>0$ and any model $q$, there exists a unique continuous linear map $\cR_t=\cR_t(q)$ from $\ccD^{\tilde{\gamma},\tilde\eta}_{\ell_0,\delta,t}(q)$ into $\cB^{\alpha}_{\ell_0+t,\delta+2\kappa}$ that satisfies
	\begin{equation}\label{Eq:Recons}
		\| \cR_t \textup{h} \|_{\cB^{\alpha}_{\ell_0+t,\delta+2\kappa}} \lesssim t^{\frac{\tilde\eta}{2}}(\|q\|+\|q\|^2) \|\textup{h}\|_{\tilde{\gamma},\tilde{\eta},\ell_0,\delta,t}\;,
	\end{equation}
	as well as
	\begin{equation}\label{Eq:ReconsBd}
		\bigg\| \sup_{\eta \in \cB_r} \frac{\Big\vert \langle \cR_t \textup{h} - \Pi_x \textup{h}(x), \eta^\lambda_x \rangle \Big\vert}{e_{\ell_0+t}(x)p_{\delta+2\kappa}(x)} \bigg\|_{L^2} \lesssim \lambda^{\tilde\gamma} t^{\frac{\tilde\eta-\tilde\gamma}{2}} (\|q\|+\|q\|^2) \|\textup{h}\|_{\tilde{\gamma},\tilde{\eta},\ell_0,\delta,t}\;,
	\end{equation}
	uniformly over all $\lambda \in (0,\sqrt{t/2}]$, all $t>0$ and all models $q$. Furthermore\BLUE{
	\begin{align*}
		&\| \cR_t \textup{h} -\bar\cR_t \bar{\textup{h}}\|_{\cB^{\alpha}_{\ell_0+t,\delta+2\kappa}}\\
		&\lesssim t^{\frac{\tilde\eta}{2}} \Big((\|q\|+\|q\|^2) \|\textup{h};\bar{\textup{h}}\|_{\tilde{\gamma},\tilde{\eta},\ell_0,\delta,t} + \|q-\bar{q}\|(1+\|q\|+\|\bar{q}\|)\|\bar{\textup{h}}\|_{\tilde{\gamma},\tilde{\eta},\ell_0,\delta,t}\Big)\;,
	\end{align*}
	as well as}
	\begin{align*}
		&\bigg\| \sup_{\eta \in \cB_r} \frac{\Big\vert \langle \cR_t \textup{h} - \bar\cR_t \bar{\textup{h}} - \Pi_x \textup{h}(x) + \bar\Pi_x \bar{\textup{h}}(x), \eta^\lambda_x \rangle \Big\vert}{e_{\ell_0+t}(x)p_{\delta+2\kappa}(x)}  \bigg\|_{L^2}\\
	&\lesssim \lambda^{\tilde\gamma} t^{\frac{\tilde\eta-\tilde\gamma}{2}} \Big((\|q\|+\|q\|^2) \|\textup{h};\bar{\textup{h}}\|_{\tilde{\gamma},\tilde{\eta},\ell_0,\delta,t} + \|q-\bar{q}\|(1+\|q\|+\|\bar{q}\|)\|\bar{\textup{h}}\|_{\tilde{\gamma},\tilde{\eta},\ell_0,\delta,t}\Big)\;,
	\end{align*}
	uniformly over all $\lambda \in (0,\sqrt{t/2}]$, and over all $t>0$ and all models $q,\bar q$.
	\end{proposition}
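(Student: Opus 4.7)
The strategy is to adapt the Reconstruction Theorem of Hairer~\cite{Hai14} to the present weighted $L^2$-based Besov setting. Uniqueness follows directly from \eqref{Eq:ReconsBd}: any two candidates would have a difference $g$ satisfying $\sup_{\eta\in\cB_r}|\langle g, \eta^\lambda_x\rangle|$ that decays polynomially in $\lambda$ with positive exponent $\tilde\gamma$ locally in $L^2(dx)$; since $\tilde\gamma > 0$, standard distributional arguments force $g = 0$.

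For existence, my plan is to build $\cR_t \textup{h}$ through a wavelet-based dyadic approximation. Let $(\varphi^n_y)_{n\ge 0,\, y\in 2^{-n}\Z^3}$ be an orthonormal multiresolution basis built from a compactly supported $C^r$ father wavelet $\varphi$, with associated mother wavelets $\psi^n_y$, and set $\cR^n_t \textup{h} := \sum_{y\in 2^{-n}\Z^3} \langle \Pi_y \textup{h}(y),\varphi^n_y\rangle\,\varphi^n_y$. The key step is to show that $(\cR^n_t \textup{h})_n$ is Cauchy in $\cB^\alpha_{\ell_0+t,\delta+2a}$. After expanding $\cR^{n+1}_t\textup{h} - \cR^n_t \textup{h}$ in the wavelet basis, each wavelet coefficient can be rewritten in terms of the coherence error $\langle \Pi_{y'}(\textup{h}(y)-\Gamma_{y,y'}\textup{h}(y')), \psi^n_{y'}\rangle$ between two neighboring dyadic points $y,y'$ at scale $2^{-n}$. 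The coherence piece of the $\ccD^{\tilde\gamma,\tilde\eta}_{\ell_0,\delta,t}$-seminorm is tailored precisely to give an $L^2$-in-$x$ control of the local $L^2$-average in $y\in B(x,\lambda)$ of $|\textup{h}(y)-\Gamma_{y,x}\textup{h}(x)|_\zeta$ and, combined with the admissible-model bound $|\langle \Pi_x \tau, \eta^\lambda_x\rangle| \lesssim \lambda^{|\tau|}\,\|q\|\, p_\kappa(x)$, yields a geometric decay of the $\cB^\alpha_{\ell_0+t,\delta+2a}$-norm of the increment of order $2^{-n(\tilde\gamma-\alpha)}$, summable since $\tilde\gamma > 0$. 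The bound \eqref{Eq:Recons} then follows by summing the telescopic series, while \eqref{Eq:ReconsBd} at a given scale $\lambda\in(0,\sqrt{t/2}]$ is obtained by splitting $\cR_t\textup{h} - \Pi_x\textup{h}(x)$ into dyadic contributions above and below the scale $\lambda$: the fine scales are controlled by the Cauchy increment estimate, while the coarse scales rely on both the model bound and the coherence of $\textup{h}$ transported from the base point $y$ to $x$ via $\Gamma_{y,x}$. The factor $t^{\tilde\eta/2}$ reflects the short-time weight encoded in the $\ccD^{\tilde\gamma,\tilde\eta}_{\ell_0,\delta,t}$-seminorm, while $\|q\|+\|q\|^2$ appears because certain basis vectors of the regularity structure of (PAM) (such as $\cI(\Xi)$ or $\cI(\Xi\cI(\Xi))$) involve products that depend linearly or quadratically on the model.

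The Lipschitz estimate for $(q,\textup{h})\mapsto \cR_t\textup{h}$ is obtained by applying the same argument to the difference $\textup{h}-\bar{\textup{h}}$, together with an additional contribution $\|q-\bar q\|(1+\|q\|+\|\bar q\|)\|\bar{\textup{h}}\|$ that tracks the discrepancy between the two models used to evaluate the basis vectors on the right-hand side. The main obstacle I anticipate is the careful bookkeeping of the polynomial weights through the $L^2$-in-$x$ norms arising in the dyadic sums: the slight enlargement $p_\delta\to p_{\delta+2a}$ is precisely what is needed to absorb, via a Cauchy--Schwarz-type argument, the spatial translation between the base point $y\in 2^{-n}\Z^3$ of $\Pi_y$ and the evaluation point $x$ at which the outer $L^2(dx)$-norm is taken, while the exponential weight $e_{\ell_0+t}$ is preserved thanks to the elementary bound $e_\ell(y)/e_\ell(x)\le C$ for $|y-x|\le 1$, compatible with the locality of the wavelets $\varphi^n_y$.
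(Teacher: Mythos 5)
Your overall blueprint reproduces the wavelet construction of the Reconstruction Theorem, which is the standard route; the paper, by contrast, does not redo this construction but invokes \cite[Theorem~2.10]{HL18} directly (a space-time version of the same theorem, whose proof carries over verbatim to the space-only setting), and then spends its effort on the two steps specific to the present weighted/time-scaled framework. Your plan covers the small-scale reconstruction estimate \eqref{Eq:ReconsBd} and the continuity in $(q,\textup{h})$, but it misses the part of the proof that is actually specific to this proposition.

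The genuine gap is the regime of coarse test-function scales $\lambda \in [\sqrt{t/2},1]$. Recall that the target norm $\|\cdot\|_{\cB^\alpha_{\ell_0+t,\delta+2a}}$ takes a supremum over \emph{all} $\lambda\in(0,1]$, whereas the coherence seminorm in $\|\textup{h}\|_{\tilde\gamma,\tilde\eta,\ell_0,\delta,t}$ is only taken over $\lambda\le\sqrt{t/2}$ (this restriction reflects the parabolic time weight and is not removable). Your telescopic sum of dyadic increments $\cR^{n+1}_t\textup{h}-\cR^n_t\textup{h}$ only converges geometrically when $2^{-n}\lesssim\sqrt{t/2}$; below that dyadic level the coherence input is unavailable, and your ``summing the telescopic series'' step does not establish that $\cR_t\textup{h}$ has finite $\cB^\alpha_{\ell_0+t,\delta+2a}$-norm. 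The paper fills this gap with a partition-of-unity argument: for $\lambda\in[\sqrt{t/2},1]$ it decomposes $\eta^\lambda_x = \sum_y \eta^\lambda_x\,\psi((\cdot-y)/\lambda_0)$ at scale $\lambda_0\sim\sqrt{t}$, rescales each piece to a test function in $\cB_r$ at scale $\lambda_0$, applies Jensen's inequality over the $O((\lambda/\lambda_0)^3)$ nonzero summands, and then reduces to the scale-$\lambda_0$ estimate already obtained from \eqref{Eq:ReconsBd} and the pointwise model bound on $\Pi_x\textup{h}(x)$. Without this step, the conclusion \eqref{Eq:Recons} — i.e.\ that $\cR_t$ actually lands in $\cB^\alpha_{\ell_0+t,\delta+2a}$ with the stated operator norm — is not justified. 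I would also flag a smaller point: you should make explicit that the bound on the model-part term $\langle\Pi_x\textup{h}(x),\eta^\lambda_x\rangle$ uses $\zeta\ge\alpha$ to get $\lambda^{\zeta-\alpha}t^{(\tilde\eta-\zeta)/2}\lesssim t^{\tilde\eta/2}$ uniformly in $\lambda\le\sqrt{t/2}$, since that is where the $t^{\tilde\eta/2}$ factor in \eqref{Eq:Recons} actually comes from.
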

	\begin{proof}
	The reconstruction theorem stated in~\cite[Theorem 2.10]{HL18} deals with space-time modelled distributions but its proof applies verbatim to the case of modelled distributions in space only. It yields the bound
	\begin{align*}&\bigg\| \sup_{\eta \in \cB_r} \Big\vert \langle \cR_t \textup{h} - \Pi_x \textup{h}(x), \eta^\lambda_x \rangle \Big\vert \bigg\|_{L^2(B(x_0,1),dx)}\\
		&\lesssim \lambda^{\tilde\gamma} t^{\frac{\tilde\eta-\tilde\gamma}{2}} (p_\kappa(x_0)+p_\kappa(x_0)^2) e_{\ell_0+t}(x_0)p_{\delta}(x_0) (\|q\|+\|q\|^2) \|\textup{h}\|_{\tilde{\gamma},\tilde{\eta},\ell_0,\delta,t,x_0}\;,
	\end{align*}
	uniformly over all $\lambda \in (0,\sqrt{t/2}]$, all $x_0\in\R^3$. Here  $\|\textup{h}\|_{\tilde{\gamma},\tilde{\eta},\ell_0,\delta,t,x_0}$ is obtained from the expression of $\|\textup{h}\|_{\tilde{\gamma},\tilde{\eta},\ell_0,\delta,t}$ by restricting the $L^2$ norm to the spatial domain $B(x_0,2)$. \BLUE{Since $p_\kappa^2\,p_\delta = p_{2\kappa + \delta}$, }we deduce the bound \eqref{Eq:ReconsBd} of the statement.\\
	Let us prove that $\cR_t$ takes values in $\cB^{\alpha}_{{\ell_0+t},{\delta+2a}}$ and satisfies \eqref{Eq:Recons}. We observe that uniformly over all $\lambda \le \sqrt{t/2}$
	\begin{align*}
		\bigg\| \sup_{\eta \in \cB_r} \frac{\Big\vert \langle\Pi_x \textup{h}(x), \eta^\lambda_x \rangle \Big\vert}{\lambda^{\alpha} e_{\ell_0+t}(x)p_{\delta+2\kappa}(x)} \bigg\|_{L^2}&\lesssim \|q\| \sum_{\zeta} \frac{\lambda^{\zeta} t^{\frac{\tilde\eta-\zeta}{2}}}{\lambda^{\alpha}} \bigg\| \frac{p_\kappa(x)\big\vert \textup{h}(x)\big\vert_\zeta}{ e_{\ell_0+t}(x)p_{\delta+2\kappa}(x)t^{\frac{\tilde\eta-\zeta}{2}}} \bigg\|_{L^2}\;,
	\end{align*}
	Recall that $\zeta \ge \alpha$ hence the bound $\lambda^{\zeta-\alpha}t^{\frac{\tilde\eta-\zeta}{2}} \lesssim t^{\frac{\tilde\eta-\alpha}{2}} \lesssim t^{\frac{\tilde\eta}{2}}$, and we deduce that the last expression is bounded by a term of order $\BLUE{t^{\frac{\tilde\eta}{2}}}\|q\| \|\textup{h}\|_{\tilde{\gamma},\tilde{\eta},\ell_0,\delta,t}$ as required. Together with the reconstruction bound~\eqref{Eq:ReconsBd}, this shows that
	$$ \bigg\| \sup_{\eta \in \cB_r} \frac{\Big\vert \langle \cR_t \textup{h}, \eta^\lambda_x \rangle \Big\vert}{\lambda^{\alpha} e_{\ell_0+t}(x)p_{\delta+2\kappa}(x)} \bigg\|_{L^2} \lesssim t^{\frac{\tilde\eta}{2}}\|\textup{h}\|_{\tilde{\gamma},\tilde{\eta},\ell_0,\delta,t} \BLUE{(\|q\|+\|q\|^2)}\;,$$
	uniformly over all $\lambda \in (0,\sqrt{t/2})$. On the other hand, for $\lambda \in [\sqrt{t/2},1]$, one can write
	$$\eta^\lambda_x(\cdot) = \sum_{y\in (\lambda_0 \bbZ)^3} \eta^\lambda_x(\cdot) \psi((\cdot-y)/\lambda_0)\;,$$
	where $\sum_{y\in \bbZ^3} \psi(\cdot-y)$ is a smooth, compactly supported partition of unity, and $\lambda_0 = \sqrt{t\BLUE{/2}}$. The number of non-zero terms in this sum is of order $(\lambda/\lambda_0)^3$, and each $\eta^\lambda_x(\cdot) \psi((\cdot-y)/\lambda_0)$ can be seen, up to a multiplicative factor uniformly bounded over all parameters at stake, as a function $(\lambda_0/\lambda)^{3} \varphi^{\lambda_0}_y$ where $\varphi\in\cB_r$. By Jensen's inequality, we thus get
	\begin{align*}
		\Big\vert \langle \cR_t \textup{h}, \eta^\lambda_x \rangle \Big\vert^2 &= \Big\vert \sum_{y\in (\lambda_0 \bbZ)^3}\langle \cR_t \textup{h}, \eta^\lambda_x \psi((\cdot-y)/\lambda_0) \rangle \Big\vert^2\\
		&\lesssim \sum_{y\in (\lambda_0 \bbZ)^3:|y-x|\le \lambda}(\lambda_0/\lambda)^{3} \sup_{\varphi\in\cB_r}\Big\vert \langle\cR_t \textup{h}, \varphi_y^{\lambda_0} \rangle \Big\vert^2\;.
	\end{align*}
	Since $\varphi_y$ can be seen as some function $\psi_z$ for any given $z\in B(y,1)$, we obtain
	\begin{align*}
		\bigg\| \sup_{\eta \in \cB_r} \frac{\Big\vert \langle \cR_t \textup{h}, \eta^\lambda_x \rangle \Big\vert}{\lambda^{\alpha} e_{\ell_0+t}(x)p_{\delta+2\kappa}(x)} \bigg\|_{L^2} &\lesssim \bigg(\int_{z\in\R^3} \sup_{\psi\in \cB_r} \Big(\frac{\Big\vert \langle \cR_t \textup{h}, \psi^{\lambda_0}_z \rangle \Big\vert}{\lambda^{\alpha} e_{\ell_0+t}(z)p_{\delta+2\kappa}(z)}\Big)^2 dz\bigg)^{1/2}\;.
	\end{align*}
	At this point, we write \BLUE{$\cR_t \textup{h} = (\cR_t \textup{h}-\Pi_z  \textup{h}(z)) + \Pi_z  \textup{h}(z)$} and argue separately for the two terms. \BLUE{Applying the reconstruction bound \eqref{Eq:ReconsBd} to the first term, we obtain a bound of} order $t^{\frac{\tilde\eta}{2}} \|\textup{h}\|_{\tilde{\gamma},\tilde{\eta},\ell_0,\delta,t} \lambda^{-\alpha}$ while \BLUE{the second term can be bounded as follows}
	\begin{align*}
		 \bigg(\int_{z\in\R^3} \sup_{\psi\in \cB_r} \Big(\frac{\Big\vert \langle \Pi_z \textup{h}(z), \psi^{\lambda_0}_z \rangle \Big\vert}{\lambda^{\alpha} e_{\ell_0+t}(z)p_{\delta+2\kappa}(z)}\Big)^2 dz\bigg)^{1/2} &\lesssim \|\textup{h}\|_{\tilde{\gamma},\tilde{\eta},\ell_0,\delta,t} \sum_\zeta \Big(\frac{t^{\frac{\tilde{\eta}-\zeta}{2}} \lambda_0^\zeta}{\lambda^\alpha}\Big)\\
		 &\lesssim  \|\textup{h}\|_{\tilde{\gamma},\tilde{\eta},\ell_0,\delta,t} \,t^{\frac{\tilde{\eta}}{2}} \lambda^{-\alpha}\\
		 &\lesssim \|\textup{h}\|_{\tilde{\gamma},\tilde{\eta},\ell_0,\delta,t} \,t^{\frac{\tilde{\eta}}{2}}\;,
	\end{align*}
	uniformly over all $\lambda \in [\sqrt{t/2},1]$\BLUE{, thus concluding the proof that $\cR_t$ takes values in $\cB^{\alpha}_{{\ell_0+t},{\delta+2a}}$ and satisfies \eqref{Eq:Recons}. The case of two models follow from similar arguments.}
	\end{proof}
	\BLUE{We now proceed with the proof of Proposition \ref{Prop:ContinuityInTime}.}
	\begin{proof}[Proof of Proposition \ref{Prop:ContinuityInTime}]
	Recall that $\textrm{u}$ is the unique fixed point of~\eqref{Eq:FixedPt3d} that was constructed in~\cite[Thm 5.2]{HL18}. Let us write $\textrm{f} := \textrm{u}\Xi$ and recall from~\cite[Section 4.1]{HL18} that $\textrm{f}\in\cD^{\gamma',\eta',2}_{T,w}(q)$ where $\gamma':=\gamma+\alpha$, $\eta':=\eta+\alpha<\alpha$. The definition of this space was given previously, let us observe that for all $\zeta\in\cA_{<\gamma'}$ the following quantities are finite
	\begin{align}
		&\sup_{t\in (0,T]} \bigg\| \frac{\vert \textrm{f}(t,x) \vert_\zeta}{t^{\frac{\eta'-\zeta}{2}} w^{(1)}_t(x,\zeta)} \bigg\|_{L^2} \\
		&\sup_{\lambda \in (0,2]}\sup_{t\in (2\lambda^2,T]} \bigg\|\int_{y\in B(x,\lambda)} \lambda^{-3} \frac{\vert \textrm{f}(t,y) - \Gamma_{y,x} \textrm{f}(t,x) \vert_\zeta}{t^{\frac{\eta'-\gamma'}{2}} w^{(2)}_t(x,\zeta) \lambda^{\gamma'-\zeta}} dy \bigg\|_{L^2} \label{Eq:fSpaceReg}\\
		&\sup_{t\in (0,T]}\sup_{s \in (t/2,t)} \bigg\|\frac{\vert \textrm{f}(t,x) - \textrm{f}(s,x) \vert_\zeta}{t^{\frac{\eta'-\gamma'}{2}} w^{(1)}_t(x,\zeta) \vert t-s\vert^{\frac{\gamma'-\zeta}{2}}} \bigg\|_{L^2} \label{Eq:fTimeReg}
	\end{align}
	The precise expressions of the weights are unimportant for this proof, we only need that there exists some constants $c,C>0$ such that for all $t,x$ and all $\zeta$, $i\in\{1,2\}$
	$$ (1+p_\kappa(x))w^{(i)}_t(x,\zeta) \le C e_{\ell_0+t}(x) p_c(x)\;,$$
	and that $c$ can be taken as small as desired by diminishing $\kappa$.\\
	\BLUE{We set $\tilde{\gamma} = \gamma'-\kappa/2 > 0$ and $\tilde{\eta} = \eta'-\kappa/2 > -2$. The bounds recalled above show that $\textrm{f}(t,\cdot) \in \ccD^{\tilde{\gamma},\tilde\eta}_{\ell_0,c,t}(q)$ so that Proposition \ref{Prop:Recons} yields
	\begin{equation*}
		\| \cR_t(\textrm{f}(t,\cdot)) \|_{\cB^{\alpha}_{\ell_0 + t,c+2\kappa}} \lesssim t^{\frac{\tilde\eta}{2}}\;,
	\end{equation*}
	uniformly over all $t\in (0,T)$.}\\
	We turn to the continuity in time. We consider $s \in (t/2,t)$. \BLUE{It is straightforward to check that $\textrm{f}(s,\cdot) \in \ccD^{\tilde{\gamma},\tilde\eta}_{\ell_0,\delta,t}(q)$ so that}
	$$\cR_t(\textrm{f}(t,\cdot)) - \cR_s(\textrm{f}(s,\cdot)) = \cR_t(\textrm{f}(t,\cdot)) - \cR_t(\textrm{f}(s,\cdot)) = \cR_t(\textrm{f}(t,\cdot)-\textrm{f}(s,\cdot))\;.$$
	\BLUE{We would like to establish that
	$$ 	\| \cR_t(\textrm{f}(t,\cdot)-\textrm{f}(s,\cdot)) \|_{\cB^{\alpha}_{\ell_0+t,c+2\kappa}} \lesssim |t-s|^{\frac{\kappa}{4}} t^{\frac{\tilde\eta}{2}}\;,$$
	uniformly over all $t\in (0,T]$ and all $s\in (t/2,t)$. Given~\eqref{Eq:Recons}, it suffices to show that
	$$ \| \textrm{f}(t,\cdot)-\textrm{f}(s,\cdot) \|_{\tilde{\gamma},\tilde\eta,\ell_0,c,t} \lesssim |t-s|^{\frac{\kappa}{4}}\;.$$
}
	From \eqref{Eq:fTimeReg}, we deduce that
	\begin{align*}
		\bigg\| \frac{\vert \textrm{f}(t,x)-\textrm{f}(s,x) \vert_\zeta}{t^{\frac{\tilde\eta-\zeta}{2}}e_{\ell_0+t}(x)p_c(x)} \bigg\|_{L^2} &\lesssim \frac{t^{\frac{\eta'-\gamma'}{2}}}{t^{\frac{\tilde\eta-\zeta}{2}}}|t-s|^{\frac{\gamma'-\zeta}{2}} =t^{\frac{\zeta-\tilde\gamma}{2}}|t-s|^{\frac{\gamma'-\tilde\gamma}{2}}|t-s|^{\frac{\tilde\gamma-\zeta}{2}}\\
		&\lesssim |t-s|^{\frac{\kappa}{4}}\;,
	\end{align*}
	\BLUE{uniformly over all $\zeta\in \cA_{<\tilde\gamma}$, all $s\in (t/2,t)$ and all $t\in (0,T]$.}\\
	Regarding the regularity in space, we distinguish two cases. First if $\lambda \le \sqrt{t-s}$ then we bound separately the contributions coming from $\textrm{f}(t,\cdot)$ and $\textrm{f}(s,\cdot)$
	\begin{align*}
		&\bigg\|\int_{y\in B(x,\lambda)} \lambda^{-3} \frac{\vert \textrm{f}(t,y) -\textrm{f}(s,y) - \Gamma_{y,x} \textrm{f}(t,x) + \Gamma_{y,x} \textrm{f}(s,x) \vert_\zeta}{t^{\frac{\tilde\eta-\tilde\gamma}{2}}e_{\ell_0+t}(x)p_c(x) \lambda^{\tilde\gamma-\zeta}} dy \bigg\|_{L^2(\R^3)}\\
		&\lesssim \bigg\|\int_{y\in B(x,\lambda)} \lambda^{-3} \frac{\vert \textrm{f}(t,y) - \Gamma_{y,x} \textrm{f}(t,x)\vert_\zeta}{t^{\frac{\tilde\eta-\tilde\gamma}{2}}e_{\ell_0+t}(x)p_c(x) \lambda^{\tilde\gamma-\zeta}} dy \bigg\|_{L^2(\R^3)}\\
		&+\bigg\|\int_{y\in B(x,\lambda)} \lambda^{-3} \frac{\vert \textrm{f}(s,y) - \Gamma_{y,x} \textrm{f}(s,x)\vert_\zeta}{t^{\frac{\tilde\eta-\tilde\gamma}{2}}e_{\ell_0+t}(x)p_c(x) \lambda^{\tilde\gamma-\zeta}} dy \bigg\|_{L^2(\R^3)}
	\end{align*}
	From \eqref{Eq:fSpaceReg} we get a bound of order
	$$\lambda^{\gamma'-\tilde{\gamma}} \frac{t^{\frac{\eta'-\gamma'}{2}}}{t^{\frac{\tilde\eta-\tilde\gamma}{2}}} \le (t-s)^{\frac{\kappa}{4}}  \;,$$
	uniformly over all $\lambda \le \sqrt{t-s} \wedge \sqrt{t/2}$, all $s\in (t/2,t)$ and all $t\in (0,T)$. Second if $\sqrt{t-s}\le \lambda \le \sqrt{t/2}$ then we write
		\begin{align*}
		&\bigg\|\int_{y\in B(x,\lambda)} \lambda^{-3} \frac{\vert \textrm{f}(t,y) -\textrm{f}(s,y) - \Gamma_{y,x} \textrm{f}(t,x) + \Gamma_{y,x} \textrm{f}(s,x) \vert_\zeta}{t^{\frac{\tilde\eta-\tilde\gamma}{2}}e_{\ell_0+t}(x)p_c(x) \lambda^{\tilde\gamma-\zeta}} dy \bigg\|_{L^2(\R^3)}\\
		&\leq \bigg\|\int_{y\in B(x,\lambda)} \lambda^{-3} \frac{\vert \textrm{f}(t,y) -\textrm{f}(s,y)\vert_\zeta}{t^{\frac{\tilde\eta-\tilde\gamma}{2}}e_{\ell_0+t}(x)p_c(x) \lambda^{\tilde\gamma-\zeta}} dy \bigg\|_{L^2(\R^3)}\\
		&+\bigg\|\int_{y\in B(x,\lambda)} \lambda^{-3} \frac{\vert \Gamma_{y,x} \textrm{f}(t,x) - \Gamma_{y,x} \textrm{f}(s,x)\vert_\zeta}{t^{\frac{\tilde\eta-\tilde\gamma}{2}}e_{\ell_0+t}(x)p_c(x) \lambda^{\tilde\gamma-\zeta}} dy \bigg\|_{L^2(\R^3)}
	\end{align*}
	The first term can be bounded using \eqref{Eq:fTimeReg} and yields a bound of order
	$$ \frac1{\lambda^{\tilde\gamma -\zeta}} |t-s|^{\frac{\gamma'-\zeta}{2}} \frac{t^{\frac{\eta'-\gamma'}{2}}}{t^{\frac{\tilde\eta-\tilde\gamma}{2}}} \leq |t-s|^{\frac{\gamma'-\tilde\gamma}{2}} = |t-s|^{\frac{\kappa}{4}} \;,$$
	uniformly over all $\sqrt{t-s} \le \lambda \le \sqrt{t/2}$, all $s\in (t/2,t)$ and all $t\in (0,T)$. Regarding the second term, we use the bound
	$$\vert \Gamma_{y,x} \tau\vert_\zeta \lesssim \sum_{\beta \ge \zeta} p_\kappa(x) \vert y-x \vert^{\beta-\zeta} \vert \tau|_\beta \le \sum_{\beta \ge \zeta} p_\kappa(x) \lambda^{\beta-\zeta}\vert \tau|_\beta\;,$$
	together with \eqref{Eq:fTimeReg} to bound the second term by the sum over all $\beta \ge \zeta$ of
	\begin{align*}
		\bigg\|\int_{y\in B(x,\lambda)} \lambda^{-3} \frac{p_\kappa(x) \vert \textrm{f}(t,x) - \textrm{f}(s,x)\vert_\beta}{t^{\frac{\tilde\eta-\tilde\gamma}{2}}e_{\ell_0+t}(x)p_c(x) \lambda^{\tilde\gamma-\beta}} dy \bigg\|_{L^2}&\lesssim \frac{(t-s)^{\frac{\gamma'-\beta}{2}}}{\lambda^{\tilde\gamma-\beta}} \frac{t^{\frac{\eta'-\gamma'}{2}}}{t^{\frac{\tilde\eta-\tilde\gamma}{2}}}\\
		&\le (t-s)^{\frac{\kappa}{4}}\;,
	\end{align*}
	uniformly over all $\sqrt{t-s} \le \lambda \le \sqrt{t/2}$, all $s\in (t/2,t)$ and all $t\in (0,T)$.\\
	\BLUE{We have thus proven that $g:t\mapsto \cR_t (f(t,\cdot))$ lies in $\ccE^{\alpha,\eta+\alpha-\kappa/2}_{\ell_0,\delta, T}$. We need to argue that it coincides with $\cR(f)$. Note that, if  $\eta:\R\times \R^3\to\R$ is a space-time test function, then
		$$ \langle g, \eta\rangle = \int_{s\in\R} \langle \cR_s(f(s,\cdot)), \eta(s,\cdot)\rangle ds\;.$$
	Then the reconstruction bound~\eqref{Eq:ReconsBd} ensures that
	\begin{align*}
		\ &\bigg\| \sup_{\eta \in \cB_r(\R^4)} \Big\vert \int_s \Big\langle \cR_s \textup{f}(s,\cdot) - \Pi_x \textup{f}(s,x), \lambda^{-5} \eta\Big(\frac{s-t}{\lambda^2},\frac{\cdot-x}{\lambda}\Big) \Big\rangle ds \Big\vert \bigg\|_{L^2(B(x_0,1),dx)}\\
		&\lesssim \int_{s\in [t-\lambda^2,t+\lambda^2]} \lambda^{\tilde{\gamma}} s^{\frac{\tilde\eta-\tilde\gamma}{2}} e_{\ell_0+s}(x_0)p_{c+2\kappa}(x_0) \lambda^{-2} ds\\
		&\lesssim \lambda^{\tilde{\gamma}} t^{\frac{\tilde\eta-\tilde\gamma}{2}} e_{\ell_0+t+\lambda^2}(x_0)p_{c+2\kappa}(x_0)\;,
	\end{align*}
	uniformly over all $t\in (3\lambda^2,T-\lambda^2]$, all $\lambda\in (0,1]$, all $x_0\in\R^3$. The uniqueness associated with the reconstruction bound of~\cite[Eq (3.12) of Theorem 3.10]{HL18} ensures that $g=\cR(\textrm{f})$.\\
	Finally, the continuity with respect to $(q,f)$ can be deduced from the continuity of $(q,f) \mapsto \textrm{u}$ and the continuity bounds stated in Proposition \ref{Prop:Recons}.}
	\end{proof}

\subsection{The self-adjoint operator}

\begin{definition}\label{def:symmetric-semigroup_3d}
	Fix $q \in \ccM$. For $t>0$, define the domain $\cD_t := L^2_{e_{-t}} \subset L^2$ and the operator
	\[P_t : \bigg\{ \begin{array}{lll}
		\cD_t &\to &L^2\\
		f & \mapsto &u^{q, f}(t)
	\end{array}\]
	Define also $P_0 = I$ on $L^2$.
\end{definition}

\begin{proposition}
	Fix $q\in \ccM$. The collection $(P_t)_{t \in [0, T]}$ is a semigroup of operators on $\mathfrak{h} = L^2(\R^3,dx)$ and is strongly continuous with respect to $t$. In particular, it satisfies the \emph{(Non-increasing domains with dense union)}, \emph{(Semigroup)}, \emph{(Symmetry)} and \emph{(Weak continuity)} properties of Theorem \ref{Th:Klein-Landau}.
\end{proposition}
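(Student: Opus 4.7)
The plan is to verify the four conditions of Theorem \ref{Th:Klein-Landau} by essentially unwrapping the definitions of $\cD_t$ and $P_t$ and applying the corresponding items of Theorem \ref{Th:PAM3d}. Since in dimension three the domain is taken to be $\cD_t = L^2_{e_{-t}}$ directly, without a conjugation by $e^{-V}$ as in Subsection \ref{subsec:semigroup2d}, each verification will in fact be cleaner than its two-dimensional counterpart.

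First I would observe that $e_{-t}(x) = e^{-t(1+|x|)}$ is non-increasing in $t$, so that the reciprocal weight $1/e_{-t}$ is non-decreasing. This shows that the norm of $L^2_{e_{-t}}$ is non-decreasing in $t$ and hence $\cD_t \subset \cD_s$ whenever $s \le t$. Since every smooth compactly supported function lies in $\cD_t$ for every $t \in (0,T]$ and $C^\infty_c$ is dense in $L^2$, the union $\bigcup_{0 < t \le T} \cD_t$ is dense in $L^2$. The semigroup property then follows directly from items 1 and 3 of Theorem \ref{Th:PAM3d}: taking $\ell_0 = -t$ in item 1 shows that for $f \in \cD_t$ and $0 \le s \le t$, $P_s f = u^{q,f}(s) \in L^2_{e_{-(t-s)}} = \cD_{t-s}$, while item 3 (applied with $s$ replaced by $t-s$) yields $P_{t-s}(P_s f) = u^{q, P_s f}(t-s) = u^{q,f}(t) = P_t f$.

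For the (weak) continuity, I would note first that $\cD_t \hookrightarrow L^2$ continuously since $e_{-t} \le 1$ implies $\|f\|_{L^2} \le \|f\|_{L^2_{e_{-t}}}$. Applying item 2 of Theorem \ref{Th:PAM3d} with $\ell_0 = -t$ and horizon $t$ then shows that $s \mapsto P_s f = u^{q,f}(s)$ is continuous from $[0,t]$ into $L^2_{e_0} = L^2$, which is the asserted strong continuity and, via Cauchy--Schwarz, yields the continuity of $s \mapsto \langle f, P_s f\rangle$.

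The remaining and mildly more delicate step, and the one I expect to be the main (though still routine) technical point, is the symmetry. Item 4 of Theorem \ref{Th:PAM3d} gives the identity $\langle f, u^{q,g}(t)\rangle = \langle u^{q,f}(t), g\rangle$ only for $f, g \in C_c$, while the Klein--Landau assumption requires it for arbitrary $f, g \in \cD_t$. The plan is to approximate: given $f, g \in L^2_{e_{-t}}$, pick sequences $f_n, g_n \in C^\infty_c$ converging to $f, g$ in $L^2_{e_{-t}}$ (which exist by the standard truncation-and-mollification argument, since the weight $1/e_{-t}$ is locally bounded). By item 1 of Theorem \ref{Th:PAM3d}, $u^{q, f_n}(t) \to u^{q, f}(t)$ and $u^{q, g_n}(t) \to u^{q, g}(t)$ in $L^2_{e_0} = L^2$, while the continuous embedding $L^2_{e_{-t}} \hookrightarrow L^2$ ensures $f_n \to f$ and $g_n \to g$ in $L^2$ as well. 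Passing to the limit in $\langle f_n, u^{q, g_n}(t)\rangle = \langle u^{q, f_n}(t), g_n\rangle$ yields the desired symmetry on $\cD_t \times \cD_t$.
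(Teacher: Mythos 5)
Your proof is correct and follows exactly the same approach as the paper's (which is stated very tersely): deduce each Klein--Landau hypothesis directly from the definition of $\cD_t$ and the four items of Theorem \ref{Th:PAM3d}, in particular using the continuity in item 1 together with density of $C_c$ in $L^2_{e_{-t}}$ to extend the symmetry identity from item 4 to all of $\cD_t$. You merely spell out the bookkeeping (the choices $\ell_0=-t$, $T=t$, the embedding $L^2_{e_{-t}}\hookrightarrow L^2$) that the paper leaves implicit.
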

\begin{proof}
	{\emph{Non-increasing domains with dense union}:} It is immediate that $\cD_t \subset \cD_s$ whenever $s\le t$. Furthermore since $C_c \subset \cD_t$ the density property holds. {\emph{Semigroup}:} This is a consequence of items 1 and 3 of Theorem \ref{Th:PAM3d}. {\emph{Weak-continuity}:} This is a consequence of item 2 of Theorem \ref{Th:PAM3d} that even shows strong continuity in time. {\emph{Symmetry}:} This is a consequence of item 4 of Theorem \ref{Th:PAM3d} and of the continuity stated in item 1 of Theorem \ref{Th:PAM3d}.
\end{proof}

\begin{definition}\label{def:L_3d}
	For $q \in {\ccM}$, let $H(q)$ be the unique self-adjoint operator given by Theorem \ref{Th:Klein-Landau} associated to the symmetric semigroup $(P_t)_{t \in [0, T]}$ of Definition \ref{def:symmetric-semigroup}. In particular, $P_t$ is the restriction of $e^{-tH(q)}$ to $\cD_t$ for all $t \in [0, T]$.
\end{definition}

\begin{remark}
Since $\cD_t$ is dense in $L^2$, the proof of~\cite[Lemma 6]{KL81} shows that $H(q)$ is essentially self-adjoint over $\hat{\cD}(q) := \bigcup_{0 < s < t} P_s \cD_t$.
\end{remark}

We now establish a few properties satisfied by $H(q)$. Given any $f\in L^2$, denote by $\mu_f(q)$ the spectral measure associated to the self-adjoint operator $H(q)$ and $f$, that is, the unique finite measure on $\R$ with Stieltjes transform
$$ \int_\R (\lambda-z)^{-1} \mu_f(q)(d\lambda) = \langle f, (H(q)-z)^{-1} f\rangle\;,\quad \forall z\in \C\backslash \R\;.$$

\begin{proposition}\label{prop:continuity_H_3d}
	\begin{enumerate}
		\item For any $f\in L^2$, the map $q \mapsto \mu_f(q)$ is continuous from ${\ccM}$ into the space of finite non-negative measures endowed with the topology of weak convergence. As a consequence $q\mapsto H(q)$ is continuous in the strong resolvent sense.
		\item When $q=Q_\eps(\xi)$, $H(q)$ coincides with the essentially self-adjoint operator $-\Delta + \xi_\eps + C_\eps$.
	\end{enumerate}
\end{proposition}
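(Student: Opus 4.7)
The strategy is to mirror the proof of Proposition \ref{prop:continuity_H} in dimension two, replacing the two-dimensional inputs (the PAM well-posedness and renormalisation) by their three-dimensional counterparts (Theorem \ref{Th:PAM3d} and the smoothness statement in item 5 thereof), and keeping the measure-theoretic arguments unchanged.

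For item 1, I would first fix $f\in C_c(\R^3)$. Since $\cD_t(q)=L^2_{e_{-t}}\supset C_c$ for every $q$ and every $t>0$, the function $f$ lies in $\cD_t(q)$ uniformly in $q$, and the Laplace transform of the spectral measure $\mu_f(q)$ is
\[
\int_\R e^{-s\lambda}\mu_f(q)(\dd\lambda) = \crochet{f, e^{-sH(q)}f} = \crochet{f, u^{q,f}(s)}\;,\qquad s\in[0,t]\;.
\]
If $q_n\to q$ in $\ccM$, item 1 of Theorem \ref{Th:PAM3d} gives $u^{q_n,f}(s)\to u^{q,f}(s)$ in some $L^2_{e_{\ell_0+s}}$; pairing against the compactly supported $f$ then yields pointwise (in $s\in[0,t]$) convergence of the Laplace transforms. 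Lemma \ref{Lemma:Laplace} now upgrades this to weak convergence $\mu_f(q_n)\to\mu_f(q)$, which in turn gives convergence of Stieltjes transforms $\crochet{f,(H(q_n)-z)^{-1}f}\to\crochet{f,(H(q)-z)^{-1}f}$ for all $z\in\C\setminus\R$. Since the resolvents are uniformly bounded by $|\Im z|^{-1}$ and $C_c$ is dense in $L^2$, weak resolvent convergence extends to all $f\in L^2$, which by \cite[Lemma 6.37]{Tes14} is equivalent to strong resolvent convergence. Finally, weak convergence of $\mu_f(q_n)$ for general $f\in L^2$ follows from the convergence of Stieltjes transforms combined with the fact that the total masses $\mu_f(q_n)(\R)=\|f\|_{L^2}^2$ are constant in $n$.

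For item 2, I would take $q=Q_\eps(\xi)$. The potential $W:=\xi_\eps+C_\eps$ is smooth (hence locally bounded) and, for a fixed realisation of the white noise, satisfies $W(x)\ge -C\,p_\kappa(x)$ almost surely by standard Gaussian tail bounds on the stationary field $\xi_\eps$. Since $\kappa<2$, Kato's classical criterion \cite{Kat72} then applies and the operator $T:=-\Delta+W$ is essentially self-adjoint on $C^\infty_c(\R^3)$; still denote by $T$ its self-adjoint closure. By functional calculus, for any $f\in\cD(e^{-tT})$ the function $s\mapsto e^{-sT}f$ is well defined on $[0,t]$, takes values in $L^2$, and (since $W$ is smooth) is a classical $L^2$-solution of \eqref{eq:PAM3d} with noise term $\xi_\eps+C_\eps$. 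The uniqueness of mild solutions in a suitable weighted $L^2$ space together with item 5 of Theorem \ref{Th:PAM3d} forces $e^{-sT}f=u^{q,f}(s)$ for $s\in[0,t]$, so that the Laplace transforms of the spectral measures of $T$ and of $H(q)$ associated to $f$ coincide on $[0,t]$. By analytic continuation and density of $\cD(e^{-tT})$ in $L^2$, the projection-valued measures $E^T$ and $E^{H(q)}$ agree, whence $T=H(q)$.

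The main obstacle, which however is not serious, is the lower bound $W\ge -Cp_\kappa$ needed to invoke Kato's theorem: unlike in dimension two, where a deterministic polynomial bound is obtained from the definition of $q$, here one must use a genuinely probabilistic argument (chaining and Borel--Cantelli for the smooth Gaussian field $\xi_\eps$) to produce such a lower bound almost surely. Once this is in place, matching the two semigroups proceeds exactly as in dimension two.
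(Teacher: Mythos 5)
Your argument for item 1 is exactly the paper's: for $f\in C_c$, pair $f$ against $u^{q,f}(s)$, use continuity in $q$ from Theorem~\ref{Th:PAM3d} to get convergence of Laplace transforms, invoke Lemma~\ref{Lemma:Laplace}, pass to Stieltjes transforms, and upgrade to all of $L^2$ by density and uniform boundedness of resolvents. No issues there.

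For item 2 your plan is also the right one (and it is the paper's: the paper simply declares the rest of the argument identical to the two-dimensional Proposition~\ref{prop:continuity_H}), but the ``main obstacle'' you flag is not an obstacle at all, and your proposed fix is a misconception. You claim that, unlike in dimension two, the lower bound $W(x)\ge -C\,p_\kappa(x)$ on $W=\xi_\eps+C_\eps$ requires a genuinely probabilistic argument (Gaussian tails, chaining, Borel--Cantelli). It does not. The sample space in dimension three is $\Omega=\cC^{-\frac32-\kappa}_{p_b}(\R^3)$, so every $\xi\in\Omega$ \emph{deterministically} satisfies $\sup_{x}\,|\langle\xi,\varphi^\lambda_x\rangle|/(p_b(x)\lambda^{-\frac32-\kappa})<\infty$; taking $\varphi=\varrho$ and $\lambda=\eps$ gives $|\xi_\eps(x)|\lesssim_\eps p_b(x)$ pointwise, hence $W(x)\ge -C p_\kappa(x)$ for every $\xi\in\Omega$, with no further randomness involved. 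This is exactly parallel to the two-dimensional case, where $\Omega=\cC^{-1-\kappa}_{p_b}(\R^2)$ plays the same role; the probabilistic content is entirely absorbed in the choice of $\Omega$ (i.e.\ in the fact that white noise is supported on this weighted Besov space), not in the statement of item 2, which is a deterministic assertion for each fixed $\xi\in\Omega$. Once the lower bound is in place, the identification of the two semigroups via uniqueness of mild solutions and coincidence of the projection-valued measures is, as you say, carried over verbatim from dimension two.
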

\begin{proof}
	Fix $f\in C_c$. Then $f\in \cD_t(q)$ for any $q\in {\ccM}$ and any $t\ge 0$. In addition, we have
	\[\int_\R e^{-s \lambda} \mu_f(q)(\dd \lambda) = \crochet{f, e^{-sH(q)} f} = \crochet{f, u^{q,f}(s)}, \quad \forall s \in [0, t] \;.\]
	Suppose $(q_n)$ is a sequence in ${\ccM}$ converging to $q$. Since $f$ has compact support, we deduce from item 1 of Theorem \ref{Th:PAM3d} that
	$$ \crochet{f, u^{q_n, f}(s)} \to \crochet{f, u^{q, f}(s)}\;,\quad n\to\infty\;.$$
	Therefore the Laplace transform of the measure $\mu_f(q_n)$ converges to the Laplace transform of $\mu_f(q)$. By Lemma \ref{Lemma:Laplace}, this implies weak convergence of the measures. The rest of the proof is identical to the proof of Proposition \ref{prop:continuity_H}.
\end{proof}

We have all the ingredients at hand to define the Anderson Hamiltonian with white noise potential on $\R^3$. For any $\xi\in\Omega$, recall that $Q_\eps(\xi), Q(\xi) \in {\ccM}$ and set
\[\cH_\eps(\xi) := H(Q_\eps(\xi))\;,\quad \cH(\xi) := H(Q(\xi))\;,\]
where $H$ is the deterministic map introduced in Definition \ref{def:L_3d}.

\begin{proof}[Proof of Theorem \ref{Th:Construction} in dimension $3$]
	By Proposition \ref{prop:continuity_H_3d}, for any $\xi\in\Omega$ we have $\cH_\eps(\xi) = -\Delta + \xi_\eps + C_\eps$. By Lemma \ref{lem:Omega03d}, $Q$ is the limit in probability of $Q_\eps$ so that Proposition \ref{prop:continuity_H_3d} entails that $\cH_\eps$ converges in the strong resolvent sense to $\cH$ in probability. Finally, Definition \ref{def:L_3d} ensures that, for any $t\ge 0$, the domain of the operator $e^{-t\cH}$ contains the set $\cD_t$ so in particular all functions $f\in L^2(\R^d,dx)$ with compact support, and $e^{-t\cH} f$ coincides with the solution of \eqref{Eq:PAM} at time $t$, starting from $f$ at time $0$.
\end{proof}

\subsection{Inclusion of the supports}\label{subsec:supports3d}
 Let $\supp(Q_\eps)$ and $\supp(Q)$ be the topological supports of the laws of the r.v.~$Q_\eps$ and $Q$. The following result is due to Hairer and Sch\"onbauer~\cite{HS21}.
 
 \begin{theorem}\label{Th:Support3d}
 	For any given $\eps \in (0,1)$, it holds $\supp(Q_\eps) \subset \supp(Q)$.
 \end{theorem}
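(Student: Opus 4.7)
The plan is to invoke the support theorem of Hairer and Sch\"onbauer~\cite{HS21}, which characterises $\supp(Q)$ in terms of the Cameron-Martin space $H = L^2(\R^3)$ of the white noise $\xi$. Their theorem shows that $\supp(Q)$ coincides with the closure of an explicit family of deterministic ``shifted models'' $\{Q^{(h)} : h \in H\} \subset \ccM$, each built from a Cameron-Martin element $h$ by applying the renormalisation recipe of Section~4.1 to the shifted noise. Applied to the smooth Gaussian field $\xi_\eps$, the same framework yields
$$\supp(Q_\eps) = \overline{\{Q^{(h)}_\eps : h \in H\}}^{\ccM}, \qquad Q^{(h)}_\eps := \ccR^{\bar c_\eps} q_{\mbox{\tiny can}}(h * \varrho_\eps),$$
since $\xi_\eps$ has Cameron-Martin space $\{h * \varrho_\eps : h \in H\}$ and $Q_\eps$ depends continuously on its argument.

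The next step is to show that each generator $Q^{(h)}_\eps$ already lies in $\supp(Q)$. The Hairer-Sch\"onbauer construction of $Q^{(h)}$ proceeds by taking the limit, as $\delta \downarrow 0$, of the renormalised canonical model associated with the smooth function $h * \varrho_\delta$; evaluating this construction ``before the limit'' at the specific scale $\delta = \eps$ gives exactly $Q^{(h)}_\eps$, which by the same theorem lies in $\supp(Q)$. Alternatively, one can view $Q^{(h)}_\eps$ as $Q^{(h * \varrho_\eps)}$ when the HS recipe is applied directly to the smooth element $h * \varrho_\eps \in H$. Either way, every generator of $\supp(Q_\eps)$ belongs to the closed set $\supp(Q)$, and passing to the closure yields the desired inclusion $\supp(Q_\eps) \subset \supp(Q)$.

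\textbf{Main obstacle.} The principal subtlety is to unpack the Hairer-Sch\"onbauer machinery in our concrete setting: one must verify that the regularity structure of (PAM) in dimension three, together with the renormalisation constants $\bar c_\eps = (c_\eps, c^{(1,1)}_\eps, c^{(1,2)}_\eps)$, fits the axiomatic framework of \cite{HS21}, and that the explicit description of the shifted models provided there coincides with (or at least contains) the renormalised canonical models $Q^{(h)}_\eps$ defined above. This reduces to an algebraic check within the renormalisation group of \cite{HP15}; conceptually there is no new difficulty, but the bookkeeping of Wiener chaoses and symbol expansions can be delicate, which is precisely the type of verification that the general theorem of \cite{HS21} is designed to absorb.
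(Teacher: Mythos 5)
Your proposal correctly identifies the Hairer–Schönbauer support theorem as the essential tool and correctly reduces matters to showing that the renormalised canonical models $Q^{(h)}_\eps := \ccR^{\bar c_\eps}\, q_{\mbox{\tiny can}}(h * \varrho_\eps)$ lie in $\supp(Q)$. Your first step, characterising $\supp(Q_\eps)$ via the Cameron–Martin space of $\xi_\eps$, is fine (the paper gets there more directly by simply observing that $Q_\eps(\xi)$ always takes values in the set of renormalised canonical models).

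The genuine gap is in the sentence ``evaluating this construction `before the limit' at the specific scale $\delta=\eps$ gives exactly $Q^{(h)}_\eps$, which by the same theorem lies in $\supp(Q)$.'' This does not follow. The HS theorem characterises $\supp(Q)$ as the closure of \emph{shifts of the BPHZ model}, not of the intermediate $\delta$-approximants; there is no reason a priori why an object obtained ``before the limit'' should belong to the closed set $\supp(Q)$. Your alternative reading, that $Q^{(h)}_\eps = Q^{(h*\varrho_\eps)}$, misidentifies the objects: $Q^{(g)}$ in the HS framework is the random BPHZ model shifted by $g\in H$, which is not the deterministic renormalised canonical model $\ccR^{\bar c_\eps}q_{\mbox{\tiny can}}(g)$. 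The crux of the matter, which your ``main obstacle'' paragraph treats as mere algebraic bookkeeping, is that the renormalisation constants $\bar c_\eps$ \emph{diverge} as $\eps\downarrow 0$, and one must show that $\supp(Q)$ contains renormalised canonical models with \emph{arbitrary} constants $\bar c\in\R^3$. The paper isolates exactly this as the third of three ingredients: a shift operator $T_h$ compatible with $\ccR^{\bar c}q_{\mbox{\tiny can}}$ (HS Theorem 2.4), the Cameron–Martin invariance $q\in\supp(Q)\Rightarrow T_hq\in\supp(Q)$, and, crucially, $\ccR^{\bar c}q_{\mbox{\tiny can}}(0)\in\supp(Q)$ for every $\bar c\in\R^3$. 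This last point rests on the HS ``resonance''/random-shift argument (\cite[Prop.\ 3.8, 3.20, 3.21]{HS21}), which constructs random $\zeta_\delta$ such that $T_{\zeta_\delta}Q(\xi)\to\ccR^{\bar c}q_{\mbox{\tiny can}}(0)$; that is a genuine probabilistic argument, not a verification of algebraic axioms, and it is what your proposal omits. Once all three ingredients are in place, the conclusion $\ccR^{\bar c_\eps}q_{\mbox{\tiny can}}(h*\varrho_\eps) = T_{h*\varrho_\eps}\ccR^{\bar c_\eps}q_{\mbox{\tiny can}}(0) \in \supp(Q)$ follows, but without the third ingredient the argument is incomplete.
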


\BLUE{Let us provide some general comments. In~\cite{HS21}, Hairer and Sch\"onbauer present a general framework that allows to determine the support of the solutions of singular SPDEs. Previously, Chouk and Friz~\cite{CF16} identified the support of the solution of the generalised (PAM) in dimension $2$. In both references, a crucial step consists in identifying the support of the enhanced noise, also called model in the context of regularity structures. This support involves some constants, that can be interpreted as renormalisation constants: in the context of Theorem \ref{Th:Support2d}, it turns out that the support contains $(0,c)$ for any constant $c$, and a major difficulty of the proof consisted in showing that $(0,c)$ indeed belongs to the support for any given $c\in\R$. In the specific case of the generalised (PAM) in dimension $2$, Chouk and Friz~\cite{CF16} presented a very explicit proof: they managed to produce any renormalisation constant by cooking up some sort of ``deterministic pure area rough path''. Surprinsingly, their proof exploited the Fourier analysis of the paracontrolled calculus extensively and it is unclear how to adapt their proof in the context of regularity structures. Hairer and Sch\"onbauer~\cite{HS21} proposed a different approach for ``producing'' the renormalisation constants: instead of cooking up a ``deterministic pure area rough path'', they shifted the model by a random, stationary, smooth noise (built from the original white noise by a local procedure). In the case of dimension $2$, we implemented this approach in Lemma \ref{Lemma:Resonance}. The case of dimension $3$ is much more involved, and falls into the framework of~\cite{HS21}. Let us mention that Hairer and Sch\"onbauer work on a torus. However their arguments are local in nature so that they apply also to the full space setting.}
\begin{proof}
	We aim at proving that
	\begin{equation}\label{Eq:supp3d}
	\supp(Q) = \overline{\{\ccR^{\bar c} q_{\mbox{\tiny can}}(h) : \bar{c} \in\BLUE{\R^2},\quad h\in  C^\infty_{p_{b}}\}}^{\ccM}\;.
	\end{equation}
	Recall that for all $\xi\in\Omega$, $Q_\eps(\xi) = \ccR^{\bar c_\eps} q_{\mbox{\tiny can}}(\xi_\eps)$ with $\bar c_\eps = (c_\eps,\BLUE{c^{(1)}_\eps})$ and $\xi_\eps \in C^\infty_{p_b}$. As a consequence $\supp(Q_\eps)$ belongs to the set on the r.h.s.~of \eqref{Eq:supp3d}. Consequently, the statement of the theorem is proven once \eqref{Eq:supp3d} is established. To prove this identity, we rely on three ingredients:\begin{enumerate}
		\item There exists a shift operator $(h,q) \mapsto T_h q$ which is continuous from $C^\infty_{p_{b}}\times \ccM$ into $\ccM$ and which satisfies
		$$ \ccR^{\bar c} q_{\mbox{\tiny can}}(f+h) = T_h \ccR^{\bar c} q_{\mbox{\tiny can}}(f) = \ccR^{\bar c} T_h q_{\mbox{\tiny can}}(f)\;,$$
		for all $\bar c\in\R^3$ and all $f,h \in C^\infty_{p_b}$.
		\item If $q\in \supp(Q)$ then $T_h q \in \supp(Q)$ for any $h\in C^\infty_{p_{b}}$.
		\item For any $\bar c\in\BLUE{\R^2}$, $\ccR^{\bar c}q(0) \in \supp(Q)$.
	\end{enumerate}
	With these three ingredients at hand, we deduce that for any $\bar c\in\R^3$ and any $h\in  C^\infty_{p_{b}}$
	$$ \ccR^{\bar c} q_{\mbox{\tiny can}}(h) = T_h \ccR^{\bar c} q_{\mbox{\tiny can}}(0) \in \supp(Q)\;,$$
	and since $\supp(Q)$ is a closed set, \BLUE{we deduce that $\supp(Q)$ contains the set on the r.h.s.~of \eqref{Eq:supp3d}. On the other hand, since $Q_\eps$ converges in law to $Q$ and since we already know that $\supp(Q_\eps)$ belongs to the set on the r.h.s.~of \eqref{Eq:supp3d}, we deduce that $\supp(Q)$ also belongs to the set on the r.h.s.~of \eqref{Eq:supp3d} and this completes the proof.}\\
	Let us provide arguments for the three ingredients above. Item 1.~is an extension to a weighted setting of~\cite[Theorem 2.4]{HS21}: the algebraic part works exactly the same while the required analytic bounds in infinite volume are satisfied since we chose the parameter $b$ \BLUE{smaller than $\kappa/4$}. Let us prove Item 2. From the continuity of the shift operator and the closedness of the support, it is sufficient to deal with $h\in C^\infty_{p_{b}} \cap L^2$. Then, the Cameron-Martin theorem ensures that the laws of $\xi$ and $\xi+h$ are equivalent. Furthermore \BLUE{for all $\xi \in \Omega$ and all $\eps >0$ we have
		$$ Q_\eps(\xi+h) = T_h Q_\eps(\xi)\;.$$
	In addition, the convergence in probability of $Q_\eps$ to $Q$ and the continuity of the shift operator shows that $\lim_\eps T_h Q_\eps(\xi) = T_h Q(\xi)$ in probability. The equivalence of the laws of $\xi$ and $\xi+h$ and the convergence in probability of $Q_\eps$ to $Q$ shows that $\lim_\eps Q_\eps(\xi+h)=Q(\xi+h)$ in probability. Therefore, for $\P$-almost all $\xi$
	$$ Q(\xi+h) = T_h Q(\xi)\;.$$}
	As a consequence the laws of $T_h Q(\xi)$ and $Q(\xi)$ are equivalent and $\supp(Q) = \supp(T_h Q)$. But a general result, see~\cite[Lemma 3.13]{HS21}, shows that $\supp(T_h Q)$ is the closure of $T_h(\supp(Q))$. Consequently if $q\in \supp \BLUE{(Q)}$, then $T_h q\in \supp(T_h Q)=\supp(Q)$.\\
	Item 3. can be deduced from a general result of Hairer and Sch\"onbauer~\cite{HS21}. To apply this result, we need to check that the Assumptions 2-6 from~\cite{HS21} are satisfied by the regularity structure for (PAM) in dimension $3$. Assumption 2, which is necessary for the BPHZ theorem \BLUE{ as it avoids variance blow-up}, is satisfied. Assumption 3 is trivially satisfied since we do not consider product of noises nor derivative of noise. Assumption 4, which requires the integration kernel to be homogeneous, holds. Regarding Assumptions 5 and 6, we have $\cV=\cV_0 = \{\Xi \cI(X_i \Xi)\}$ and the symmetries of the integration kernel imply the desired assumption\footnote{The symmetry group introduced in~\cite[Subsection 2.5]{HS21} and required in the Assumptions 5 and 6 is taken to be the finite group of transformations of $\R^3$ generated by $x\mapsto x$ and $x\mapsto -x$: note that the Green function of the Laplacian is invariant under these tranformations.}.\\
	In the context of~\cite[Def. 3.3]{HS21}, the annihilator $\ccH$ is the whole renormalisation group $\cG_-$. The desired result is then stated as~\cite[Proposition 3.8]{HS21}. The strategy of proof, presented in~\cite[Prop 3.20 and 3.21]{HS21}, consists in producing a random shift $\zeta_\delta$ which is such that $T_{\zeta_\delta} Q(\xi) \to \ccR^{\bar c}q(0)$ in probability as $\delta\downarrow 0$. The shift $\zeta_\delta$ happens to be a stationnary field living in a finite inhomogeneous Wiener chaos: it is associated, through It\^o-Wiener isometry, to compactly supported kernels. The convergence proven in~\cite{HS21} thus extends to the weighted setting considered in the present article using the same argument as in \eqref{Eq:xieps2d} as we did for the convergence of the renormalised model.
\end{proof}

\section{Identification of the spectrum}\label{sec:spectrum}

In this section, we identify the spectrum of the Anderson Hamiltonian with white noise potential. Our arguments apply simultaneously in dimensions $2$ and $3$.

For any $x\in\R^d$, let $\cT_x$ denote the translation operator, that is, the operator
$$ \cT_x f(y) := f(y + x)\;,\quad y\in\R^d\;.$$
Each $\cT_x$ is unitary (with adjoint $\cT_x^*f = \cT_x^{-1}f = f(\cdot - x)$) from $L^2(\R^d)$ into itself.

\begin{lemma}\label{lem:shiftshift}
	For all $\xi \in\Omega_0$, all $x\in \R^d$ and all $\eps\in (0,1)$
	$$ \cH_\eps(\theta_x \xi) = \cT_x^* \cH_\eps(\xi) \cT_x\;,\quad \cH(\theta_x \xi) = \cT_x^* \cH(\xi) \cT_x\;.$$
\end{lemma}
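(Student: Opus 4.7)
The proof proceeds in two steps: first the identity for $\cH_\eps$ (direct), then the identity for $\cH$ (by passing to the limit). Both statements are required only for $\xi \in \Omega_0$, so we have the convergence $Q_{\eps_k}(\xi) \to Q(\xi)$ and $Q_{\eps_k}(\theta_x \xi) \to Q(\theta_x \xi)$ in $\ccM$ along the subsequence $\eps_k$ provided by Lemmas \ref{lem:Omega02d} and \ref{lem:Omega03d}.

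\textbf{Step 1 (the regularised operator).} By item~2 of Propositions \ref{prop:continuity_H} and \ref{prop:continuity_H_3d}, $\cH_\eps(\xi) = -\Delta + \xi_\eps + C_\eps$, understood as the unique self-adjoint extension of this operator initially defined on $C^\infty_c$. A direct computation from $\xi_\eps = \xi * \varrho_\eps$ gives the identity $(\theta_x \xi)_\eps = \theta_x \xi_\eps$. Moreover, for any locally bounded function $V$ on $\R^d$, the multiplication operator $M_V$ satisfies $\cT_x^* M_V \cT_x = M_{\theta_x V}$, since $(\cT_x^* M_V \cT_x f)(y) = V(y-x)f(y)$. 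Combined with the elementary fact $\cT_x^* (-\Delta) \cT_x = -\Delta$, this yields
\[
\cT_x^* \cH_\eps(\xi) \cT_x = -\Delta + M_{\theta_x \xi_\eps} + C_\eps = -\Delta + (\theta_x \xi)_\eps + C_\eps = \cH_\eps(\theta_x \xi)\;.
\]

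\textbf{Step 2 (the limit operator).} Fix $\xi \in \Omega_0$ and $x \in \R^d$. By the continuity statement in item~1 of Propositions \ref{prop:continuity_H} and \ref{prop:continuity_H_3d}, the convergences $Q_{\eps_k}(\xi) \to Q(\xi)$ and $Q_{\eps_k}(\theta_x \xi) \to Q(\theta_x \xi)$ give strong resolvent convergence
\[
\cH_{\eps_k}(\xi) \xrightarrow[k\to\infty]{} \cH(\xi)\;, \qquad \cH_{\eps_k}(\theta_x \xi) \xrightarrow[k\to\infty]{} \cH(\theta_x \xi)\;.
\]
The key observation is that strong resolvent convergence is preserved under conjugation by a fixed unitary operator: since $\cT_x$ is unitary on $L^2(\R^d)$, the resolvent identity $(\cT_x^* A_n \cT_x - z)^{-1} = \cT_x^* (A_n - z)^{-1} \cT_x$ together with the boundedness of $\cT_x$ imply that if $A_n \to A$ in the strong resolvent sense then $\cT_x^* A_n \cT_x \to \cT_x^* A \cT_x$ in the strong resolvent sense. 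Applying this to $A_n = \cH_{\eps_k}(\xi)$ yields
\[
\cT_x^* \cH_{\eps_k}(\xi) \cT_x \xrightarrow[k\to\infty]{} \cT_x^* \cH(\xi) \cT_x
\]
in the strong resolvent sense. By Step~1 the left-hand side equals $\cH_{\eps_k}(\theta_x \xi)$ for every $k$, and this sequence converges to $\cH(\theta_x \xi)$. By uniqueness of the strong resolvent limit we conclude $\cH(\theta_x \xi) = \cT_x^* \cH(\xi) \cT_x$.

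There is no substantial obstacle in this argument; the whole strategy is to exploit the symmetry of the approximation together with the continuity of the map $q \mapsto H(q)$ established earlier. The only point one should verify carefully is the preservation of strong resolvent convergence under a fixed unitary conjugation, which follows from the intertwining identity for the resolvents applied to each fixed $f \in L^2$.
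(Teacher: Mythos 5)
Your proof is correct and follows essentially the same two-step strategy as the paper: verify the identity for the regularised operator directly from $\cH_\eps(\xi) = -\Delta + \xi_\eps + C_\eps$ together with the elementary commutation relations, then pass to the limit along $\eps_k$ using strong resolvent convergence and the unitarity of $\cT_x$. You merely spell out the intermediate computations (such as $(\theta_x\xi)_\eps = \theta_x\xi_\eps$ and the preservation of strong resolvent convergence under a fixed unitary conjugation) that the paper leaves implicit.
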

\begin{proof}
	For all $\xi\in \Omega$, we have seen that in both dimensions $2$ and $3$ the operator $\cH_\eps(\theta_x \xi)$ coincides with $-\Delta + \xi_\eps(\cdot-x) + C_\eps$ so that
	\[\cH_\eps(\theta_x\xi) =  \cT_x^* \cH_\eps(\xi) \cT_x.\]	
	We now restrict to $\xi \in \Omega_0$. In both dimensions, for all $x\in\R^d$
	\[\cH(\theta_x \xi) = \lim_k \cH_{\eps_k}(\theta_x\xi) = \lim_{\eps_k} \cT_x^* \cH_{\eps_k}(\xi) \cT_x = \cT_x^* \cH(\xi) \cT_x\]
	where the limits are in the strong resolvent sense and where we have used the fact that $\cT_x$ is unitary on $L^2$.
\end{proof}

We are now in the framework of ergodic random operators. More precisely, it is well-known that $(\theta_x)_{x \in \R^d}$ is an ergodic family of measure-preserving transformations on the probability space $(\Omega,\P)$, see for instance~\cite[Prop. B.1]{Mat21}. For any $z\in\C\backslash\R$, the maps $\xi \mapsto (\cH_\eps(\xi)-z)^{-1}$ and $\xi\mapsto (\cH(\xi)-z)^{-1}$ are the compositions of the measurable maps $\xi \mapsto Q_\eps(\xi)$ and $\xi\mapsto Q(\xi)$ with the continuous (see Propositions \ref{prop:continuity_H} and \ref{prop:continuity_H_3d}) map $q\mapsto (H(q)-z)^{-1}$: therefore, they are measurable from $(\Omega,\P)$ into the set of bounded operators on $L^2(\R^d,dx)$, and we deduce that the random operators $\cH_\eps$ and $\cH$ are measurable in the sense of~\cite[Definition V.1.3]{CL90}. These properties, combined with the result of the last lemma, imply that the spectra of $\cH_\eps$ and $\cH$ are almost surely deterministic sets, see~\cite[Proposition V.2.2]{CL90}: namely, given $\eps \in (0,1)$, there exist $\Sigma_\eps,\Sigma\subset \R$ such that for $\P$-almost all $\xi$, the spectrum of $\cH_\eps(\xi)$ is $\Sigma_\eps$ and the spectrum of $\cH(\xi)$ is $\Sigma$. (A priori the sets $\Sigma_\eps, \Sigma$ depend on the dimension at stake but, as we will see, they don't).

Our last ingredient is the identification of the almost sure spectrum of $\cH_\eps$: it follows from standard techniques.

\begin{proposition}\label{prop:Sigma_eps}
	For any given $\eps\in (0,1)$ and in dimensions $2$ and $3$, $\Sigma_\eps = \R$.
\end{proposition}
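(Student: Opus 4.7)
\emph{Strategy and Weyl functions.} My plan is to apply Weyl's criterion: for any $\lambda \in \R$, on a set of full $\P$-measure I will construct a sequence $(f_n)$ of unit-norm functions in the domain of $\cH_\eps$ such that $\|(\cH_\eps - \lambda) f_n\|_{L^2} \to 0$. The key input is that $\xi_\eps$, being a smooth stationary Gaussian field, can locally be made arbitrarily close to any prescribed constant $c \in \R$ on arbitrarily large balls; this lets us mimic the operator $-\Delta + c + C_\eps$ in a large region and realise any real value as an approximate eigenvalue. Fix $\lambda \in \R$, choose $k \in \R^d$ and $c \in \R$ with $|k|^2 + c + C_\eps = \lambda$ (e.g.\ $|k|^2 = 1$, $c = \lambda - C_\eps - 1$), and pick $\chi \in C_c^\infty(B(0,1))$ with $\|\chi\|_{L^2} = 1$. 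For each $n \ge 1$ and a translate $x_n \in \R^d$ to be chosen, set
\[
f_n(y) := n^{-d/2}\,\chi\bigl((y-x_n)/n\bigr)\, e^{\mathrm{i} k \cdot y}.
\]
Then $\|f_n\|_{L^2} = 1$, and a direct computation using the choice of $k$ and $c$ yields $(\cH_\eps - \lambda) f_n = (\xi_\eps - c) f_n + R_n$, where $R_n$ collects the terms in which derivatives fall on the cutoff and satisfies $\|R_n\|_{L^2} = O(n^{-1})$ by scaling. Provided $x_n$ is chosen so that $\sup_{B(x_n, n)} |\xi_\eps - c| \le 1/n$, the first summand is also $O(n^{-1})$ in $L^2$, and hence $\|(\cH_\eps - \lambda) f_n\|_{L^2} \to 0$.

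\emph{Ergodic location of the balls.} For each $n$, define the event $E_n := \{\sup_{B(0,n)} |\xi_\eps - c| \le 1/n\}$. A Cameron-Martin shift of $\xi$ by the $L^2$ function $c\,\mathbf{1}_{B(0, n+\eps)}$ (whose convolution with $\varrho_\eps$ equals $c$ on $B(0, n)$) reduces the statement $\P(E_n) > 0$ to the fact that $\P(\|\xi_\eps\|_{L^\infty(B(0,n))} \le 1/n) > 0$, which holds because $0$ lies in the topological support of the continuous Gaussian field $\xi_\eps|_{B(0,n)}$. By ergodicity of the translations $(\theta_x)_{x \in \R^d}$ on $(\Omega, \P)$ and the multidimensional Birkhoff theorem, almost surely there exist infinitely many $x \in \Z^d$ with $\theta_x \xi \in E_n$. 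Intersecting the corresponding full-probability events over $n \in \N$ yields a single set of full probability on which a valid $x_n$ exists for every $n$. On this event, $(f_n)$ is a Weyl sequence for $\cH_\eps$ at $\lambda$, so $\lambda \in \Sigma_\eps$; since $\lambda$ was arbitrary, $\Sigma_\eps = \R$.

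\emph{Main obstacle.} The manipulations above are largely standard once the right setup is fixed; the only mildly non-routine step is justifying $\P(E_n) > 0$. This reduces to producing an $L^2$ Cameron-Martin element whose smoothing by $\varrho_\eps$ equals the constant $c$ on $B(0, n)$, which is elementary because $\varrho_\eps$ has compact support and $c\,\mathbf{1}_{B(0, n+\eps)}$ does the job. The ergodicity step and the Weyl computation are routine.
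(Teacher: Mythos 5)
Your proof is correct and follows essentially the same strategy as the paper: build a Weyl sequence by locating, via ergodicity and a Gaussian small-ball estimate, arbitrarily large regions on which $\xi_\eps + C_\eps$ is nearly constant, and plant an approximate Laplacian eigenfunction (times a cutoff) there. The only differences are cosmetic within this approach: you use a plane wave $e^{\mathrm{i}k\cdot y}$ where the paper uses a product of slowly-varying sines (your choice makes the $L^2$-normalisation cleaner), you invoke the Birkhoff theorem where the paper uses the $0$--$1$ law for the shift-invariant event ``such a ball exists somewhere,'' and you spell out the Cameron--Martin shift that the paper only alludes to when asserting $\P(E_n)>0$.
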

\begin{proof}
	Fix $r \in \R$. The strategy is to construct a Weyl sequence of $\cH_\eps$ for the value $r$, i.e. a sequence $(f_n)$ in $\cD(\cH_\eps)$ such that $\norm{f_n}_{L^2} = 1$ and $\norm{(\cH_\eps - r) f_n}_{L^2} \to 0$. The existence of such a sequence implies that $r\in\Sigma_\eps$ see~\cite[Lemma 2.17]{Tes14}. To do so, we argue by ergodicity that there exist arbitrary large regions of space where $\xi_\eps$ is ``flat'' so that we can use a Weyl sequence of the Laplacian for a well-chosen energy.
	
	For $n \ge 1$, define the event
	\[E_n :=  \left\{ \xi \in \Omega \bigg|  \exists z \in \R^d, \forall y\in \R^d: |y - z| \leq n \Rightarrow |\xi_\eps(y) + C_\eps- r | \leq \frac1{n}\right\}\;,\]
	as well as $E = \bigcap_{n \ge 1} E_n$. We claim that $\P(E) = 1$. First observe that the event $E_n$ is invariant with respect to the shift operators $(\theta_x)_{x \in \R^d}$. The ergodicity of $\xi_\eps$ thus implies that each event $E_n$ has probability either $0$ or $1$. However, the explicit gaussian structure of $\xi_\eps$ allows to show that $\P(E_n) \ge \P(\sup_{|y| \le n} |\xi_\eps(y)+C_\eps - r| \leq \frac1{n}) > 0$. As a consequence $\P(E_n) = 1$ and therefore $\P(E) = \P(\bigcap_n E_n) = 1$.
	
	Fix $\xi \in E$. For any $n\ge 1$, we can find $z_n(\xi) \in \R^d$ such that $\sup_{y:  |y - z_n(\xi)| \leq n}|\xi_\eps(y) - r + C_\eps| \leq 1/n$. Let $\chi \in C^\infty_c$ be such that $0\leq \chi \leq 1$, $\chi \equiv 1$ on $B(0, 1/2)$ and $\chi$ is supported in $B(0, 1)$. \BLUE{Let us define the function
	\[f_n(x) := \frac{\chi\big((x - z_n(\xi))/n\big)}{n^{d/2} \| \chi \|_{L^2}} , \quad x \in \R^d,\]
	which is normalized in $L^2$.} Note that $\norm{\nabla f_n}_{L^2} \leq \frac{C}{n}$ and $\norm{\Delta f_n}_{L^2} \leq \frac{C}{n^2}$ for some constant $C > 0$ independent of $n$. It is clear that $f_n$ is in the domain of $\cH_\eps(\xi) = -\Delta + \xi_\eps + C_\eps$. In addition
	\[\norm{(\cH_\eps - r) f_n}_{L^2} \leq \norm{\Delta f_n}_{L^2}  + \norm{(\xi_\eps + C_\eps - r) f_n}_{L^2} \leq \frac{C}{n^2} + \frac{1}{n} \to 0\;.\]
	Therefore $(f_n)$ is a Weyl sequence for $\cH_\eps(\xi)$ at energy $r$. Since $E$ has full measure, this proves that $r \in \Sigma_\eps$ and thus $\Sigma_\eps = \R$.
\end{proof}
We now identify the almost sure spectrum of $\cH$.
\begin{proof}[Proof of Theorem \ref{Th:Spectrum}]
	Fix $\eps \in (0,1)$. There exists a measurable set $\Omega_1\subset \Omega_0$ of full $\P$-measure on which the spectrum of $\cH_\eps(\xi)$ is $\Sigma_\eps$ and the spectrum of $\cH(\xi)$ is $\Sigma$. Let $q$ be an element in the intersection of $\supp(Q_\eps)$ and $Q_\eps(\Omega_1)$. There exists a sequence $q_n$ that lies in $Q(\Omega_1)$ such that $q_n \to q$ in the space $\ccM$ as $n\to\infty$. Indeed, if no such sequence exists then there is a ball of radius $\delta > 0$ centered at $q$ which is completely included into $\ccM\backslash Q(\Omega_1)$. This ball would have zero-measure under the law of $Q$ and therefore $q$ would not lie in $\supp(Q)$: this would raise a contradiction since $q$ lies in $\supp(Q_\eps)$ which is included into $\supp(Q)$ by Theorem \ref{Th:Support2d} in dimension $2$ and Theorem \ref{Th:Support3d} in dimension $3$.
	
	Necessarily there exist $\xi,\xi_n\in\Omega_1$ such that $q = Q_\eps(\xi)$ and $q_n = Q(\xi_n)$. Fix $f\in L^2(\R^d)$ with unit $L^2$-norm and recall that $\mu_f(q)$, $\mu_f(q_n)$ are the spectral measures associated to $H(q)=\cH_\eps(\xi)$ and $H(q_n)=\cH(\xi_n)$. Note they are probability measures on $\R$. Since $\xi\in\Omega_1$, the spectrum of $\cH_\eps(\xi)$ is $\Sigma_\eps$. Similarly, the spectrum of $\cH(\xi_n)$ is $\Sigma$ for every $n\ge 1$. By the continuity of the deterministic map $H$ stated in Propositions \ref{prop:continuity_H} and \ref{prop:continuity_H_3d}, $\mu_f(q_n)$ converges weakly to $\mu_f(q)$.\\
	Let $B$ be the open set $\R\backslash\Sigma$. Since the support of $\mu_f(q_n)$ is a subset of the spectrum of $\cH(\xi_n)$, that is, a subset of $\Sigma$, the weak convergence stated above yields
	$$ \mu_{f}(q)(B) \le \liminf_{n\to\infty} \mu_{f}(q_n)(B) = 0\;.$$
	Consequently, the support of $\mu_f(q)$ is included into $\Sigma$. Since the spectrum of $\cH_\eps(\xi)$ is the closure of the union over all $f\in L^2$ of the support of $\mu_f(q)$, this suffices to deduce that $\Sigma_\eps \subset \Sigma$. Proposition \ref{prop:Sigma_eps} shows that $\Sigma_\eps = \R$, and therefore $\Sigma = \R$.
\end{proof}

\bibliographystyle{Martin}
\bibliography{ref}

\end{document}